\newtheorem{assumption}{Assumption A.\!}%[theorem]
\renewcommand\@biblabel[1]{#1.}
\newcommand{\R}{\mathbb{R}}
\newcommand{\Rext}{\mathbb{R}\cup\{+\infty\}}
\newcommand{\abs}[1]{\left\vert#1\right\vert}
\newcommand{\set}[1]{\left\{#1\right\}}
\newcommand{\norm}[1]{\left\Vert#1\right\Vert}
\newcommand{\graph}[1]{\mathrm{gr}\left(#1\right)}
\newcommand{\Eproof}{\hfill $\square$}
\newcommand{\prox}{\mathrm{prox}}
\newcommand{\argmin}{\mathrm{arg}\!\min}
\newcommand{\dom}[1]{\mathrm{dom}(#1)}
\newcommand{\cl}[1]{\mathrm{cl}\!\left(#1\right)}
\newcommand{\bdry}[1]{\mathrm{bdry}\!\left(#1\right)}
\newcommand{\fopt}[1]{f^{\star}}
\newcommand{\xb}{\mathbf{x}}
\newcommand{\yb}{\mathbf{y}}
\newcommand{\zb}{\mathbf{z}}
\newcommand{\wb}{\mathbf{w}}
\newcommand{\ub}{\mathbf{u}}
\newcommand{\vb}{\mathbf{v}}
\newcommand{\ab}{\mathbf{a}}
\newcommand{\bb}{\mathbf{b}}
\newcommand{\cb}{\mathbf{c}}
\newcommand{\db}{\mathbf{d}}
\newcommand{\eb}{\mathbf{e}}
\renewcommand{\sb}{\mathbf{s}}
\newcommand{\Ac}{\mathcal{A}}
\newcommand{\Pc}{\mathcal{P}}
\newcommand{\Qb}{\mathbf{Q}}
\newcommand{\Qc}{\mathcal{Q}}
\newcommand{\Bc}{\mathcal{B}}
\newcommand{\Xc}{\mathcal{X}}
\newcommand{\Yc}{\mathcal{Y}}
\newcommand{\Zc}{\mathcal{Z}}
\newcommand{\Sc}{\mathcal{S}}
\newcommand{\Dc}{\mathcal{D}}
\newcommand{\Lc}{\mathcal{L}}
\newcommand{\Nc}{\mathcal{N}}
\newcommand{\Kc}{\mathcal{K}}
\newcommand{\Zb}{\mathbf{Z}}
\newcommand{\iprods}[1]{\langle #1\rangle}
\newcommand{\intx}[1]{\mathrm{int}\left(#1\right)}
\newcommand{\Id}{\mathbb{I}}
\newcommand{\trace}[1]{\mathrm{trace}\left(#1\right)}
\renewcommand{\vec}[1]{\mathrm{vec}\left(#1\right)}
\newcommand{\mat}[1]{\mathrm{mat}\left(#1\right)}
\newcommand{\zero}{\boldsymbol{0}}
\newcommand{\Zopt}{\mathcal{Z}^{\star}}
\newcommand{\ShuR}[1]{{#1}}
\newcommand{\Shu}[1]{{#1}}
\newcommand{\Tianxiao}[1]{{#1}}
\newcommand{\beforesubsec}{\vspace{-4ex}}
\newcommand{\aftersubsec}{\vspace{-2.5ex}}
\newcommand{\beforesec}{\vspace{-4ex}}
\newcommand{\aftersec}{\vspace{-3ex}}
\newcommand{\beforesubsubsec}{\vspace{-2ex}}
\newcommand{\aftersubsubsec}{\vspace{-2.5ex}}
\begin{document}

\title{Self-concordant inclusions: A unified framework for path-following generalized Newton-type algorithms
%\thanks{Grants or other notes
%about the article that should go on the front page should be
%placed here. General acknowledgments should be placed at the end of the article.}
}
%\subtitle{A Primal-Dual Algorithmic Framework for Constrained Convex Minimization}

\titlerunning{Self-concordant inclusions: A unified framework for generalized interior-point methods}
%Trade-offs in first order primal-dual convex optimization}        % if too long for running head

\author{Quoc Tran-Dinh \and Tianxiao Sun \and Shu Lu %etc.
\vspace{-2ex}
}

\authorrunning{Q. Tran-Dinh \and T. Sun \and S. Lu} % if too long for running head

\institute{Q. Tran-Dinh, T. Sun and S. Lu \at
		Department of Statistics and Operations Research\\
	        The University of North Carolina at Chapel Hill (UNC), USA\\
        		\email{quoctd@email.unc.edu}
}

\date{Received: date / Accepted: date}
% The correct dates will be entered by the editor

\maketitle

\vspace{-2ex}
\begin{abstract}
We study a class of monotone inclusions  called ``self-concordant inclusion'' which covers three fundamental convex optimization formulations as special cases.
We develop a new generalized Newton-type framework to solve this inclusion.
Our framework subsumes three schemes: full-step, damped-step and path-following methods as specific instances, while allows one to use inexact computation to form generalized Newton directions.
We prove a local quadratic convergence of both the full-step and damped-step algorithms.
Then, we propose a new two-phase inexact  path-following scheme for solving this monotone inclusion
which possesses an $\mathcal{O}(\sqrt{\nu}\log(1/\varepsilon))$-worst-case iteration-complexity to achieve an $\varepsilon$-solution, where $\nu$ is the barrier parameter and $\varepsilon$ is a desired accuracy.
As byproducts,  we customize our scheme to solve three convex problems: convex-concave saddle-point, nonsmooth constrained convex program, and nonsmooth convex program with linear constraints.
We also provide three numerical examples to illustrate our theory and compare with existing methods.

\keywords{Self-concordant inclusion \and generalized Newton-type methods \and  path-following schemes \and monotone inclusion \and constrained convex programming \and saddle-point problems}

% \PACS{PACS code1 \and PACS code2 \and more}
\subclass{90C25 \and 90C06 \and 90-08}
\end{abstract}

\beforesec
\section{Introduction}\label{sec:intro}
\aftersec
\vspace{1ex}
%%% 1.1. Problem statement.
\subsection{\bf Problem statement}
\aftersubsec
This paper is devoted to studying the following monotone inclusion which covers three important convex optimization templates \cite{Bauschke2011,Facchinei2003,Rockafellar1997}:
\begin{equation}\label{eq:mono_inclusion}
\text{Find $\zb^{\star}\in\R^p$ such that:} ~~0 \in \Ac_{\Zc}(\zb^{\star}) := \Ac(\zb^{\star}) + \Nc_{\Zc}(\zb^{\star}),
\end{equation}
where $\Zc$ is a nonempty, closed and convex set in $\R^p$; $\Ac : \R^p \rightrightarrows2^{\R^p}$ is a multivalued and maximally monotone operator (\textit{cf.}~Definition \ref{de:monotone_opers});
 $\Nc_{\Zc}(\zb)$ is the normal cone of $\Zc$ at $\zb$ given by $\set{\wb\in\R^p \mid \iprods{\wb, \zb - \hat{\zb}} \geq 0, ~\forall \hat{\zb}\in\Zc}$ if $\zb\in\Zc$, and $\emptyset$ otherwise; and ``$:=$'' stands for ``is defined as''.
Throughout this paper, we assume that $\Zc$ is endowed with a ``$\nu$ - self-concordant barrier''  $F$ (\textit{cf.}~Definition \ref{de:self_con_barrier}).
We denote by $\Zopt := \set{\zb^{\star} \mid 0 \in\Ac(\zb^{\star}) + \Nc_{\Zc}(\zb^{\star})}$ the solution set of \eqref{eq:mono_inclusion}.

Without the self-concordance of  $\Zc$, \eqref{eq:mono_inclusion} is  a classical monotone inclusion \cite{Bauschke2011,Rockafellar1997}, and can be reformulated into a multivalued variational inequality problem \cite{Facchinei2003}.
In particular, \eqref{eq:mono_inclusion} covers the optimality  (or  KKT) conditions of unconstrained and constrained  convex programs, and convex-concave saddle-point problems as described in  Subsection~\ref{subsec:motivating_exam}.
Hence, it can be used as a unified tool to study and develop numerical methods for these problems \cite{Bauschke2011,Facchinei2003}.
Methods for solving \eqref{eq:mono_inclusion} and its special instances are well-developed under different structure assumptions imposed on $\Ac$ and $\Zc$ \cite{Bauschke2011,Facchinei2003}.
See Section~\ref{sec:discussion} for a more thorough discussion.

We instead focus on a class of \eqref{eq:mono_inclusion}, where $\Zc$ is equipped with a ``\textbf{self-concordant}'' barrier (\textit{cf.}~Definition \ref{de:self_con_barrier}).
The self-concordance notion was introduced by Nesterov and Nemirovskii \cite{Nesterov2004,Nesterov1997} in the 1990s to develop a unified theory and polynomial time algorithms in interior-point methods for structural convex programming, but has not been well exploited \Shu{in} other classes of optimization methods in both the convex and nonconvex cases.

Our approach in this paper can briefly be described as follows.  Let $\Zc$ be equipped with a $\nu$-self-concordant barrier $F$.
Since $\Nc_{\Zc}(\zb) = \set{\zero^p}$ for any $\zb\in\intx{\Zc}$, the interior of $\Zc$, we can define the following barrier problem associated with \eqref{eq:mono_inclusion}:
\vspace{-0.5ex}
\begin{equation}\label{eq:barrier_mono_inc}
\text{Find $\zb^{\star}_t\in\intx{\Zc}$ such that:} ~~0 \in \Ac_t(\zb^{\star}_t) := t\nabla{F}(\zb^{\star}_t) + \Ac(\zb^{\star}_t),
\vspace{-0.5ex}
\end{equation}
where $t > 0$ is a penalty parameter.
For any $t > 0$, $\Ac_t$ remains a maximally monotone operator. Hence, \eqref{eq:barrier_mono_inc} is a parametric monotone inclusion depending on the parameter $t$.
As we will show in Lemma~\ref{le:existence_sol_m} that the solution \ShuR{$\zb^{\star}_t$} of \eqref{eq:barrier_mono_inc} exists and is unique for any $t > 0$ under mild conditions.
By perturbation theory \Shu{\cite{Dontchev2014,Robinson1980}}, one can show that \ShuR{$\zb^{\star}_t$ is continuous w.r.t. $t > 0$.}
The set $\set{\zb^{\star}_t \mid t > 0}$ containing solutions of \eqref{eq:barrier_mono_inc} for each $t$  generates a trajectory \Shu{called} the central path of \eqref{eq:mono_inclusion}.
Each point $\zb^{\star}_t$ on this path is called a central point.
%\Shu{If $\zb^{\star}_t$ and $t\nabla{F}(\zb^{\star}_t)$ converge as $t$ decreases to zero, then the limit point $\zb^{\star}$ of $\zb^{\star}_t$  is a solution of \eqref{eq:mono_inclusion}.}
Our objective is to design efficient numerical methods for solving \eqref{eq:mono_inclusion} from the linearization of \eqref{eq:barrier_mono_inc}.

%%% 1.2. Motivating examples.
\beforesubsec
\subsection{\bf Three fundamental convex optimization templates}\label{subsec:motivating_exam}
\aftersubsec
We present three basic  problems in convex optimization covered by \eqref{eq:mono_inclusion} to motivate our work.
%These three problems cover a wide range of applications in different fields.

%%%% (a). Constrained convex program
\beforesubsubsec
\subsubsection{Constrained convex programs}
\aftersubsubsec
Consider a general constrained convex optimization problem as studied in \cite{TranDinh2013e,TranDinh2015f}:
\vspace{-0.5ex}
\begin{equation}\label{eq:constr_cvx}
g^{\star} := \min_{\xb}\set{ g(\xb) \mid \xb \in\Xc},
\vspace{-0.5ex}
\end{equation}
where $g : \R^n\!\to\!\Rext$ is proper, closed and  convex, and $\Xc$ is a nonempty, closed and convex set in $\R^n$ endowed with a $\nu$-self-concordant barrier $f$ ({\emph{cf.}} Definition \ref{de:self_con_barrier}).
Let $\partial{g}$ be the subdifferential of $g$ (\textit{cf.} Section \ref{sec:preliminary}).
The following optimality condition is necessary and sufficient for $\xb^{\star}\in\R^n$ to be an optimal solution of \eqref{eq:constr_cvx} under a given constraint qualification:
\vspace{-0.5ex}
\begin{equation*}
0 \in\partial{g}(\xb^{\star}) + \Nc_{\Xc}(\xb^{\star}).
\vspace{-0.5ex}
\end{equation*}
By letting $\zb := \xb$, $\Ac := \partial{g}$ and $\Zc := \Xc$, this inclusion exactly has the same form as \eqref{eq:mono_inclusion}.
The barrier problem associated with \eqref{eq:constr_cvx} becomes
\vspace{-0.5ex}
\begin{equation*}
\Bc^{\star}(t) := \min_{\xb\in\R^n}\set{ \Bc(\xb; t) := g(\xb) + t f(\xb) \mid \xb \in\intx{\Xc}},
\vspace{-0.5ex}
\end{equation*}
where $t > 0$ is a penalty parameter.
The optimality condition of this barrier problem is $0 \in t\nabla{f}(\xb^{\star}_t) + \partial{g}(\xb^{\star}_t)$ which is exactly \eqref{eq:barrier_mono_inc} with  $F := f$.

%%%% (b). Constrained convex program with linear constraints
\vspace{-0.75ex}
\beforesubsubsec
\subsubsection{Constrained convex  programs with linear constraints}
\aftersubsubsec
\vspace{-0.5ex}
We are interested in the following constrained convex optimization problem:
\begin{equation}\label{eq:constr_cvx2}
\mathcal{G}^{\star} := \max_{\xb\in\R^n, \sb\in\R^m}\set{ \mathcal{G}(\xb, \sb) := \iprods{\cb, \xb} - g(\sb) \mid L\xb - W\sb = \bb, ~~\xb\in \Kc},
\end{equation}
where $\cb\in\R^n$, $\bb\in\R^p$, $L : \R^n\to\R^p$ and $W : \R^{n_s} \to \R^p$ are  linear operators, \Shu{$g : \R^{n_s}\to\Rext$} is a proper, closed, convex and possibly nonsmooth function, and $\Kc$ is a proper, nonempty, closed, pointed and  convex cone  endowed with a $\nu$-self-concordant logarithmically homogeneous barrier $f$ (\textit{cf.}~Definition \ref{de:self_con_barrier}).
In addition, we assume that \Shu{$n \leq p$}.

The corresponding dual problem of \eqref{eq:constr_cvx2} can be written as follows:
\begin{equation}\label{eq:dual_prob1}
\mathcal{H}^{\ast} := \min_{\yb}\set{ \mathcal{H}(\yb) := g^{\ast}(W^{\ast}\yb) + \iprods{\bb, \yb} \mid L^{\ast}\yb - \cb \in\Kc^{\ast}},
\end{equation}
where $\Kc^{\ast} := \set{\ub\in\R^n \mid \iprods{\xb,\ub} \geq 0,~\forall \xb\in\Kc}$ is the dual cone of $\Kc$, $L^{\ast}$ and $W^{\ast}$ are the adjoint operators of $L$ and $W$, respectively, and $g^{\ast}(\ub) := \sup_{\sb}\set{\iprods{\ub,\sb} - g(\sb) }$ is the conjugate of $g$.
Let $\Yc := \set{\yb\in\R^p \mid L^{\ast}\yb - \cb \in\Kc^{\ast}}$.
Then, the optimality condition of \eqref{eq:dual_prob1} becomes
\vspace{-0.5ex}
\begin{equation*}
0 \in\partial{g^{\ast}}(W^{\ast}\yb^{\star}) + \bb + \Nc_{\Yc}(\yb^{\star}),
\vspace{-0.5ex}
\end{equation*}
which fits the  form of \eqref{eq:mono_inclusion}.
The barrier problem associated with the dual problem~\eqref{eq:dual_prob1} is
\vspace{-0.5ex}
\begin{equation}\label{eq:dual_constr_cvx2}
\min_{\yb\in\R^p}\set{ g^{*}(W^{\ast}\yb) + \iprods{\bb, \yb} + tf^{*}\left(\cb - L^{\ast}\yb\right) },
\vspace{-0.5ex}
\end{equation}
where $f^{*}$ is the Fenchel conjugate of $f$.
If we define $\psi(\cdot) = g^{*}(W^{\ast}(\cdot)) + \iprods{\bb,\cdot}$ and $\varphi(\cdot) := f^{*}\left(\cb - L^{\ast}(\cdot)\right)$ the barrier of $\Yc$, then the optimality condition of \eqref{eq:dual_constr_cvx2} becomes
\vspace{-0.5ex}
\begin{equation}\label{eq:constr_cvx2_opt}
0 \in -tL\left(\nabla{f^{*}}(\cb - L^{\ast}\yb^{\star}_t )\right) + \partial{\psi}(\yb^{\star}_t),
\vspace{-0.5ex}
\end{equation}
which falls into the form \eqref{eq:barrier_mono_inc} with $\zb := \yb$, $F(\cdot) := \varphi(\cdot) = f^{*}(\cb - L^{\ast}(\cdot))$ and $\Ac(\cdot) := \partial{\psi}(\cdot)$.

%%%% (c). Convex-concave saddle point prolems
\beforesubsubsec
\subsubsection{Convex-concave saddle-point problems}
\aftersubsubsec
\vspace{-0.5ex}
Consider the following convex-concave saddle-point problem that covers many applications including signal/image processing and duality theory \cite{Chambolle2011,Combettes2005}:
\vspace{-0.5ex}
\begin{equation}\label{eq:saddle_point_prob}
\Phi^{\star} := \min_{\yb\in\Yc}\big\{ \Phi(\yb) := \psi(\yb) + \max_{\xb\in\Xc}\set{\iprods{\yb, L\xb} - g(\xb) } \big\},
\vspace{-0.5ex}
\end{equation}
where $g : \R^n \to \Rext$ is a proper, closed and convex function; $\psi : \R^m \to \Rext$ is also a proper, closed and convex function; $\Xc$ and $\Yc$ are two nonempty, closed and convex sets in $\R^n$ and $\R^m$, respectively; and $L : \R^{n} \to \R^m$ is a given linear operator.
The optimality condition of  \eqref{eq:saddle_point_prob} for a saddle point $(\xb^{\star}, \yb^{\star})$ is
\vspace{-0.5ex}
\begin{equation}\label{eq:saddle_point_opt_cond}
\left\{\begin{array}{ll}
0 &\in \partial{g}(\xb^{\star}) - L^{\ast}\yb^{\star} + \Nc_{\Xc}(\xb^{\star}), \vspace{0.5ex}\\
0 &\in \partial{\psi}(\yb^{\star}) + L\xb^{\star} + \Nc_{\Yc}(\yb^{\star}).
\end{array}\right.
\vspace{-0.5ex}
\end{equation}
where $\Nc_{\Xc}$ and $\Nc_{\Yc}$ are the normal cone of $\Xc$ and $\Yc$, respectively.
If we define $\zb := (\xb, \yb)$,  $\Zc := \Xc\times\Yc$,
\vspace{-0.5ex}
\begin{equation}\label{eq:pd_map}
\Ac(\zb) := \begin{bmatrix}\partial{g}(\xb) - L^{\ast}\yb\\ \partial{\psi}(\yb) + L\xb\end{bmatrix},~~\text{and}~~\Nc_{\Zc}(\zb) := \Nc_{\Xc}(\xb) \times \Nc_{\Yc}(\yb),
\vspace{-0.5ex}
\end{equation}
then  \eqref{eq:saddle_point_opt_cond} can be cast into the form \eqref{eq:mono_inclusion}.

Let $\Xc$ and $\Yc$ be endowed with a self-concordant barrier $f$ and $\varphi$, respectively.
Then, we can write down the barrier problem of \eqref{eq:saddle_point_prob} as
\vspace{-0.5ex}
\begin{equation*}
\Bc^{\star}(t) := \min_{\yb\in\intx{\Yc} }\Big\{ \Bc(\yb; t) := \psi(\yb) + t\varphi(\yb) + \max_{\xb\in\intx{\Xc}}\set{\iprods{\yb, L\xb} - g(\xb)  -tf(\xb) } \Big\},
\vspace{-0.5ex}
\end{equation*}
where $t > 0$ is  a penalty parameter.
Hence, its optimality condition becomes
\vspace{-0.5ex}
\begin{equation}\label{eq:saddle_opt_cond}
\left\{\begin{array}{ll}
0 &\in t\nabla{f}(\xb^{\star}_t) - L^{\ast}\yb^{\star}_t  + \partial{g}(\xb^{\star}_t) \vspace{1ex}\\
0 &\in t\nabla{\varphi}(\yb^{\star}_t) + L\xb^{\star}_t  + \partial{\psi}(\yb^{\star}_t).
\end{array}\right.
\vspace{-0.5ex}
\end{equation}
If we define $F(\zb) := f(\xb) + \varphi(\yb)$, then  \eqref{eq:saddle_opt_cond} can be written into the form~\eqref{eq:barrier_mono_inc}.

%% 1.3. Challenges, goals and contributions
\vspace{-0.5ex}
\beforesubsec
\subsection{\bf Our contributions}
\aftersubsec
We unify the proximal-point and the path-following interior-point schemes to design a joint treatment between these methods for solving the monotone inclusion \eqref{eq:mono_inclusion}.
Our approach is fundamentally different from existing methods, where we use the means of self-concordant barriers of the feasible set $\Zc$ in \eqref{eq:mono_inclusion} to develop generalized Newton-type algorithms.

We propose  a  unified framework that covers three fundamental convex problems as previously described.
We develop three different generalized Newton-type methods for solving \eqref{eq:mono_inclusion}.
Our framework covers the previous work in \cite{TranDinh2013e,TranDinh2015f} for the convex problem \eqref{eq:constr_cvx}  as special cases.
Our approach relies on specific structure of $\Zc$ in \eqref{eq:mono_inclusion} where we can treat \eqref{eq:mono_inclusion} via the linearization of its barrier formulation \eqref{eq:barrier_mono_inc}.
By introducing a new scaled resolvent mapping and generalized proximal Newton decrement, we develop a generalized Newton framework for solving \eqref{eq:mono_inclusion}.
Then, we combine it and a homotopy strategy for the penalty parameter $t$ to obtain a path-following scheme for solving \eqref{eq:mono_inclusion}.
Our approach relates to classical proximal-point and interior-point methods in the literature as discussed in Section~\ref{sec:discussion}.

%%% Contribution.
\vspace{-2ex}
\paragraph{\textbf{Contributions}:} To this end, we can summarize the contributions of this paper as follows:
\vspace{-1ex}
\begin{itemize}
\item [\textrm{(a)}] (\textit{Theory})
We study a class of monotone inclusions, which we call ``self-concordant inclusions'', that provides a unified framework using self-concordant barriers to investigate three fundamental classes of convex optimization problems.
We prove the existence and uniqueness of the central path of \eqref{eq:barrier_mono_inc} under mild assumptions.

\item [\textrm{(b)}] (\textit{Algorithms})
We propose a generalized Newton-type framework for solving \eqref{eq:mono_inclusion}.
This framework covers three  methods: full-step generalized Newton, damped-step generalized Newton, and full-step generalized Newton path-following  schemes.
Our schemes allow one to use inexact computation to form generalized Newton-search directions, and adaptively update the contraction factor for the penalty parameter $t$ associated with $F$.

\item[\textrm{(c)}] (\textit{Convergence theory})
We prove a local quadratic convergence of the first two inexact generalized Newton-methods, and estimate the worst-case iteration-complexity of  the third inexact path-following scheme to achieve an $\varepsilon$-solution, where $\varepsilon$ is the desired accuracy.
Surprisingly, this worst-case complexity is $\mathcal{O}(\sqrt{\nu}\log(1/\epsilon))$ which is the same as in standard path-following method for smooth convex programming \cite{Nesterov2004,Nesterov1994}.

\item[\textrm{(d)}] (\textit{Special instances}) We customize our path-following framework to solve three convex problems: \eqref{eq:constr_cvx}, \eqref{eq:constr_cvx2} and \eqref{eq:saddle_point_prob}, and investigate the overall worst-case iteration-complexity for each method.
In addition, we provide an explicit scheme to recover primal solutions from the duals in the linear constrained case \eqref{eq:constr_cvx2} with rigorous convergence guarantee.
\vspace{-1ex}
\end{itemize}

Let us emphasize the following points of our contributions.
First, using barrier function for the constraint set $\Zc$ in \eqref{eq:mono_inclusion} allows us to handle a wide class of problems where projections onto $\Zc$ is no longer efficient, e.g., $\Zc$ is a general polyhedral, or a hyperbolic cone.
Second, these are second order methods which often achieve high accuracy solutions and have a fast local convergence rate.
This is an advantage when the evaluation of the barrier function values and its derivatives is expensive.
In addition, they are known to be robust to inexact computations and noise.
However, as a compensation, the complexity-per-iteration is often higher than first order methods.
Fortunately, inexact computation allows us to apply iterative methods for computing generalized Newton search directions.
Third, when applied to \eqref{eq:constr_cvx}, \eqref{eq:constr_cvx2} and \eqref{eq:saddle_point_prob}, the efficiency of our algorithms depends on the cost of the scaled proximal operator of $g$ and $\psi$ which is a key component in first order, primal-dual, and splitting methods.
Finally, our framework is sufficiently general and can  be customized to specific classes of structural convex problems such as conic and geometric programming.

%% Structure of the paper
\beforesubsec
\subsection{\bf Outline of the paper}
\aftersubsec
The rest of this paper is organized as follows.
In Section \ref{sec:preliminary}, we recall some preliminary results including monotone operators and self-concordance notions \cite{Nesterov1994} \Shu{used} in this paper.
Section \ref{sec:GNMs} presents a unified generalized Newton-type framework that covers three different methods and analyzes their local convergence properties as well as their  worst-case iteration-complexity.
Section \ref{sec:pd_path_following} customizes our path-following framework to solve the convex-concave minimax problem \eqref{eq:saddle_point_prob}, the primal constrained convex problem \eqref{eq:constr_cvx}, and the linear constrained convex problem \eqref{eq:constr_cvx2}.
Section \ref{sec:num_exp} deals with \Shu{specific} applications and illustrates numerically the performance of our algorithm.
For clarity of exposition, technical proofs of the results in the main text are deferred  to the appendix.

\beforesec
\section{Preliminaries: monotonicity, convexity and self-concordance}\label{sec:preliminary}
\aftersec
We recall some preliminary results from classical convex analysis including monotonicity, convexity and self-concordance which will be used in the sequel.

%%% 2.1 Basic definitions.
\beforesubsec
\subsection{\bf Basic definitions}
\aftersubsec
Let $\iprods{\ub, \vb}$ or $\ub^{\top}\vb$ denote the inner product, and $\Vert \ub\Vert_2$ denote the Euclidean norm for any $\ub,\vb\in\R^p$.
For a proper, closed and convex function $F : \R^p\to\Rext$,  $\dom{F} := \set{ \zb \in \R^p \mid F(\zb) < + \infty}$ denotes its domain, $\mathrm{Dom}(F) := \cl{\dom{F}}$ denotes the closure of $\dom{F}$, $\partial{F}(\zb) := \big\{ \wb \in\R^p \mid F(\ub) \geq F(\zb) + \iprods{\wb, \ub - \zb}, ~\forall \ub \in\dom{F} \big\}$ denotes its subdifferential at $\zb$ \cite{Rockafellar1970}.
We also use $\mathcal{C}^3(\mathcal{Z})$ to denote the class of three-time continuously differentiable functions from $\mathcal{Z}\subseteq\mathbb{R}^p$ to $\mathbb{R}$.
Given a multivalued operator $\Ac : \R^p\rightrightarrows 2^{\R^p}$, $\dom{\Ac} := \set{\zb \in\R^p \mid \Ac(\zb) \neq\emptyset}$ denotes the domain of $\Ac$, and $\graph{\Ac} := \set{(\zb, \wb) \in\R^p\times \R^p \mid \wb\in\Ac(\zb)}$ denotes the graph of $\Ac$.
$\Sc^p_{+}$ stands for the symmetric positive semidefinite cone of dimension $p$, and $\Sc^p_{++}$ is its interior, i.e., $\Sc_{++}^p = \intx{\Sc_{+}^p}$.
For any $\Qb\in\Sc^p_{++}$, we denote $\Vert \zb\Vert_{\Qb} := \iprods{\Qb\zb, \zb}^{1/2}$ the weighted norm of $\zb$, and $\Vert \zb\Vert^{\ast}_{\Qb} := \iprods{\Qb^{-1}\zb, \zb}^{1/2}$ is its dual norm.

For the three-time continuously differentiable and convex function $F : \R^p\to\R$ defined in \eqref{eq:barrier_mono_inc} such that $\nabla^2{F}(\zb) \succ 0 $ at some $\zb\in\dom{F}$ (i.e., $\nabla^2{F}(\zb)$ is symmetric positive definite), we define the local norm, and its dual norm, respectively as
\begin{equation}\label{eq:local_norm}
\norm{\ub}_{\zb} := \iprods{\nabla^2{F}(\zb)\ub, \ub}^{1/2}, ~~~\text{and}~~~ \norm{\vb}_{\zb}^{*} := \iprods{\nabla^2{F}(\zb)^{-1}\vb, \vb}^{1/2},
\end{equation}
for given $\ub, \vb\in\R^p$.
Clearly, with this definition, the well-known Cauchy-Schwarz inequality $\iprods{\ub, \vb} \leq \norm{\ub}_{\zb}\norm{\vb}^{*}_{\zb}$ holds.

%%% 2.2. Proximal operators.
\beforesubsec
\subsection{\bf Maximally monotone, resolvent and proximal operators}
\aftersubsec
\begin{definition}\label{de:monotone_opers}
\textit{
Given a multivalued operator $\Ac :\R^p\rightrightarrows 2^{\R^p}$, we say that $\Ac$ is monotone if for any $\zb,\hat{\zb}\in\dom{\Ac}$, $\iprods{\wb - \hat{\wb}, \zb - \hat{\zb}} \geq 0$ for $\wb\in\Ac(\zb)$ and $\hat{\wb}\in\Ac(\hat{\zb})$; and $\Ac$ is maximal if its graph is not properly contained  in  the graph of any other monotone operator.
}
\end{definition}

Given a  maximally monotone operator $\Ac : \R^p\rightrightarrows 2^{\R^p}$, and $\Qb\in\Sc^p_{++}$, we define
\begin{equation}\label{eq:resolvent}
J_{\Qb^{-1}\Ac}(\zb) = (\Id + \Qb^{-1}\Ac)^{-1}(\zb) := \set{ \wb \in\R^p \mid 0 \in  \Qb(\wb - \zb) +  \Ac(\wb) },
\end{equation}
the scaled resolvent operator of $\Ac$ \cite{Bauschke2011,Rockafellar1997}.
It is well-known that  $\textrm{dom}(J_{\Qb^{-1}\Ac}) = \R^p$ and $J_{\Qb^{-1}\Ac}$ is well-defined and single-valued.
If $\Qb = \Id$, the identity matrix, then $J_{\Id^{-1}\Ac} \equiv J_{\Ac}$ is the standard resolvent of $\Ac$.
When $\Ac = \partial{g}$, the subdifferential of a proper, closed and convex function  $g$, $J_{\Qb^{-1}\Ac}$ becomes a scaled proximal operator of $g$, which is defined as follows:
\begin{equation}\label{eq:prox_oper}
\textrm{prox}_{\Qb^{-1}g}(\xb) := \argmin_{\ub}\set{ g(\ub) + (1/2)\Vert \ub - \xb \Vert_{\Qb}^2 \mid \ub \in \dom{g} }.
\end{equation}
Methods for evaluating $\textrm{prox}_{\Qb^{-1}g}$ have been discussed in the literature, see, e.g., \cite{Becker2012a,friedlander2016efficient}.
If $\Qb = \Id$, then $\prox_{\Qb^{-1}g} = \prox_g$ the standard proximal operator of $g$.
Examples of such functions can be found, e.g., in \cite{Bauschke2011,Combettes2011,Parikh2013}.

%%% 2.2. Self-concordant functions and self-concordant barriers.
\beforesubsec
\subsection{\bf Self-concordant functions and self-concordant barriers}\label{subsec:self_concordance}
\aftersubsec
We also use the self-concordance concept introduced by Nesterov and Nemirovskii \cite{Nesterov2004,Nesterov1994}.

%% Definition 2.2.
\begin{definition}\label{de:concordant}
\textit{
A univariate convex function $\varphi \in \mathcal{C}^3(\dom{\varphi})$ is called \emph{standard self-concordant} if
$\abs{\varphi'''(\tau)} \leq 2\varphi''(\tau)^{3/2}$  for all $\tau\in\dom{\varphi}$, where $\dom{\varphi}$ is an open set in $\R$.
A function $F : \dom{F}\subseteq \R^{p} \to \R$ is standard self-concordant if for any $\zb\in\dom{F}$ and $\vb\in\R^p$, the univariate function $\varphi$ defined by $\tau \mapsto \varphi(\tau) := F(\zb + \tau\vb)$ is standard self-concordant.
}
\end{definition}

% Definition 2.3.
\begin{definition}\label{de:self_con_barrier}
\textit{
A standard self-concordant function $F : \Zc\subset\R^p\to\R$ is a \emph{$\nu$-self-concordant barrier} for a convex set $\Zc$ with parameter $\nu > 0$ if $\dom{F} = \intx{\Zc}$ and
\begin{equation*}
\sup_{\ub \in\R^p} \left \{2\iprods{\nabla{F}(\zb), \ub} - \Vert \ub\Vert_{\zb}^2\right \} \leq \nu, ~~\forall \zb \in\dom{F}.
\end{equation*}
In addition, $F(\zb)$ \Shu{shall tend to $+\infty$ as $\zb$ approaches} the boundary of $\Zc$.
A function $F$ is called a $\nu$-self-concordant \textit{logarithmically homogeneous  barrier} function of $\Zc$ if $F(\tau \zb) = F(\zb) - \nu\log(\tau)$ for all $\zb\in\intx{\Zc}$ and $\tau > 0$.
}
\end{definition}
\vspace{-1ex}

Several simple sets are equipped with a self-concordant logarithmically homogeneous  barrier.
For instance, $F_{\R^p_{+}}(\zb) := -\sum_{i=1}^p\log(\zb_i)$ is a $p$-self-concordant barrier of $\R^p_{+}$, $F_{\Sc_{+}^n}(\Zb) := -\log\det(\Zb)$ is an $n$-self-concordant barrier of  $\Sc^n_{+}$, and $F(\zb, t) = -\log(t^2 - \norm{\zb}_2^2)$ is a $2$-self-concordant barrier of the Lorentz cone $\mathcal{L}_{p+1} := \set{(\zb, t) \in\R^p\times\R_{+} \mid \norm{\zb}_2 \leq t}$.

When $\Zc$ is bounded and $F$ is a $\nu$-self-concordant barrier for $\Zc$, the analytical center $\bar{\zb}^{\star}_f$ of $f$ exists and is unique. It is defined by
\begin{equation}\label{eq:analytical_center}
\bar{\zb}^{\star}_F := \argmin\set{ F(\zb) \mid \zb\in\intx{\Zc} },~~~~(\text{and its optimality condition is}~\nabla{F}(\bar{\zb}^{\star}_F) = 0).
\end{equation}
Let us define $\kappa := \nu+2\sqrt{\nu}$ for a general self-concordant barrier, and $\kappa := 1$ for a self-concordant logarithmically homogeneous barrier.
Then, we have  $\Vert \vb \Vert_{\zb}^{*} \leq \kappa\Vert \vb \Vert_{\bar{\zb}_F^{\star}}^{*} $ for any $\zb\in\mathrm{int}(\Zc)$ and $\vb\in\R^p$.

Let $\Kc$ be a proper, closed and pointed convex cone.
If $\Kc$ is endowed with a $\nu$-self-concordant logarithmically homogeneous  barrier function $F$, then its Fenchel conjugate (also called Legendre transformation \cite{Nesterov1994})
\begin{equation*}
F^{*}(\wb) := \sup_{\zb}\set{ \iprods{\wb, \zb} - F(\zb) \mid \zb\in\Kc}.
\end{equation*}
is also a $\nu$-self-concordant logarithmically homogeneous  barrier of the anti-dual cone $-\Kc^{*}$ of $\Kc$.
For instance, if $\Kc = \Sc_{+}^n$, then $\Kc^{*} = S_{+}^n = \Kc$ (self-dual cone).
A barrier function of $\Sc_{+}^n$ is $F(\zb) := -\log\det(\zb)$. Hence, $F^{*}(\wb) = -n - \log\det(-\wb)$ is a barrier function of $-\Kc^{*}$.

\beforesec
\section{Generalized Newton-type methods for self-concordant inclusions}\label{sec:GNMs}
\aftersec
\vspace{0.5ex}
We propose a novel generalized Newton-type scheme for solving \eqref{eq:mono_inclusion}.
Then, we develop three inexact generalized Newton-type schemes: full-step,  damped-step  and path-following algorithms based on the linearization of  \eqref{eq:barrier_mono_inc}.
We provide a unified analysis for convergence.

%%% 2.1. Fixed-point representation.
\beforesubsec
\subsection{\bf Fundamental assumptions and fixed-point characterization}
\aftersubsec
Throughout this paper, we rely on the following fundamental, but standard assumption.
%
%%% Assumption A0.
\vspace{-1ex}
\begin{assumption}\label{as:A0}
\begin{itemize}
\item[$\mathrm{(a)}$] The feasible set $\Zc$ is nonempty, closed and convex, and is equipped with a $\nu$-self-concordant barrier $F$.
\item[$\mathrm{(b)}$] The operator $\Ac$ is maximally monotone, $\intx{\Zc}\cap\dom{\Ac}\neq\emptyset$, and $\dom{\Ac}$ is either an open set or a closed set.
\item[$\mathrm{(c)}$] The solution set $\Zopt$ of \eqref{eq:mono_inclusion} is nonempty.
\end{itemize}
\vspace{-1ex}
\end{assumption}
We note that since $\dom{\nabla{F}} = \intx{\Zc}$, Assumption~A.\ref{as:A0} is sufficient for $\Ac_t$ defined by \eqref{eq:barrier_mono_inc} to be maximally monotone \cite[Corollary 25.5]{Bauschke2011}.
This assumption can be relaxed to different conditions as discussed in \cite[Section 25.1]{Bauschke2011}, which we omit here.

Our aim is to compute an approximate solution of \eqref{eq:mono_inclusion} up to a given accuracy as follows:

%%% Definition 2.1.
\vspace{-1ex}
\begin{definition}\label{de:MVIP_approx_sol}
\textit{
Given  $\varepsilon \geq 0$, we say that $\tilde{\zb}^{\star}_{\varepsilon}\in\intx{\Zc}$ is an $\varepsilon$-solution to \eqref{eq:mono_inclusion}  if
\begin{equation*}
\mathrm{dist}_{\tilde{\zb}^{\star}_{\varepsilon}}\big(\boldsymbol{0}, \Ac(\tilde{\zb}^{\star}_{\varepsilon}) \big) := \min_{\eb}\set{\Vert\eb\Vert_{\tilde{\zb}^{\star}_{\varepsilon}}^{\ast} \mid \eb \in\Ac(\tilde{\zb}^{\star}_{\varepsilon})} \leq \varepsilon.
\end{equation*}
}
\end{definition}
Here, $\mathrm{dist}_{\zb}(\wb,\Omega)$ defines a weighted distance from $\wb\in\R^p$ to a nonempty, closed and convex set $\Omega$ in $\R^p$.
Since $\tilde{\zb}^{\star}_{\varepsilon}\in\intx{\Zc}$, we have $\Nc_{\Zc}(\tilde{\zb}^{\star}_{\varepsilon}) = \set{\boldsymbol{0}}$. Hence, $\Ac_{\Zc}(\tilde{\zb}^{\star}_{\varepsilon}) \equiv \Ac(\tilde{\zb}^{\star}_{\varepsilon})$.
We can modify Definition~\ref{de:MVIP_approx_sol} as  $\mathrm{dist}_{\zb}\big(\boldsymbol{0}, \Ac_{\Zc}(\tilde{\zb}^{\star}_{\varepsilon}) \big) \leq \varepsilon$, where $\zb\in\intx{\Zc}$ is fixed a priori. Then, all the results in the next sections remain preserved but require a slight justification.
In the sequel, we develop different numerical methods to generate a sequence $\set{\zb^k}$ from the interior of $\Zc$.

%%% Fixed-point formulation.
\paragraph{\bf The scaled resolvent operator of $\Ac$: }
\aftersubsubsec
Let us fix $\hat{\zb} \in\intx{\Zc}$ and $t > 0$. Then, we have $\nabla^2{F}(\hat{\zb})\in\Sc^p_{++}$.
For simplicity of presentation, using \eqref{eq:resolvent} we denote by
\begin{equation}\label{eq:Pc_oper}
\Pc_{\hat{\zb}}(\cdot; t) := J_{(t\nabla^2{F}(\hat{\zb}))^{-1}\Ac}(\cdot) = \left(\Id + t^{-1}\nabla^2{F}(\hat{\zb})^{-1}\Ac\right)^{-1}(\cdot),
\end{equation}
the scaled resolvent of $\Ac$.
Using $\Pc_{\hat{\zb}}(\cdot; t)$, we can formulate the monotone inclusion \eqref{eq:barrier_mono_inc} as a fixed-point equation
\begin{equation}\label{eq:fixed_point_expression}
\zb_t^{\star} = \Pc_{\hat{\zb}}\left(\zb^{\star}_t - \nabla^2{F}(\hat{\zb})^{-1}\nabla{F}(\zb^{\star}_t); t \right).
\end{equation}
Clearly, if we define $R_{\hat{\zb}}(\cdot) := \Pc_{\hat{\zb}}(\cdot - \nabla^2{F}(\hat{\zb})^{-1}\nabla{F}(\cdot); t)$, then $\zb^{\star}_t$ is a fixed-point of $R_{\hat{\zb}}(\cdot)$.

\paragraph{\bf The existence of the central path: }
\aftersubsubsec
We prove in Appendix~\ref{apdx:le:existence_sol_m} the following existence result for \eqref{eq:barrier_mono_inc}.
Let us recall that the horizon cone of a convex set $C$ consists of vectors $\omega$ such that $\zb+\tau \omega \in \cl{C}$ for any $\zb\in C$ and any $\tau>0$, where $\cl{C}$ stands for the closure of $C$.

%%% Lemma 1.
\begin{lemma}\label{le:existence_sol_m}
Suppose that for any nonzero $\boldsymbol{\omega}$ in the horizon cone of $\intx{\Zc}\cap\dom{\Ac}$, there exists some $\hat{\zb}\in \intx{\Zc}\cap\dom{\Ac}$ with $\hat{\ab}\in \Ac(\hat{\zb})$ such that $\iprods{\hat{\ab}, \boldsymbol{\omega}} > 0$. Then, for each $t > 0$, problem \eqref{eq:barrier_mono_inc} has a unique solution.
Moreover, we have $\mathrm{dist}_{\zb^{\star}_t}\big(\boldsymbol{0}, \Ac(\zb^{\star}_t) \big) \leq t\sqrt{\nu}$, which shows that $\zb^{\star}_t$ is an $\varepsilon$-solution to \eqref{eq:mono_inclusion} in the sense of Definition~\ref{de:MVIP_approx_sol} if $t \leq \frac{\varepsilon}{\sqrt{\nu}}$.
\end{lemma}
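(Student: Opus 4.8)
The plan is to establish the two assertions of Lemma~\ref{le:existence_sol_m} separately: first, existence and uniqueness of the solution $\zb^{\star}_t$ of \eqref{eq:barrier_mono_inc} for each fixed $t>0$; second, the bound $\mathrm{dist}_{\zb^{\star}_t}\big(\boldsymbol{0}, \Ac(\zb^{\star}_t)\big) \leq t\sqrt{\nu}$, from which the $\varepsilon$-solution claim is immediate by taking $t\le\varepsilon/\sqrt{\nu}$. For uniqueness, I would argue that $\Ac_t = t\nabla F + \Ac$ is \emph{strictly} monotone on $\intx{\Zc}\cap\dom{\Ac}$: $\Ac$ is monotone by Assumption~A.\ref{as:A0}(b), and $t\nabla F$ is strictly monotone because $F$, being self-concordant with $\dom{F}=\intx{\Zc}$ open and (as a barrier) having no lines in its domain, satisfies $\nabla^2 F(\zb)\succ 0$ everywhere on $\intx{\Zc}$; hence $\iprods{t\nabla F(\zb)-t\nabla F(\hat\zb),\zb-\hat\zb}>0$ for $\zb\neq\hat\zb$. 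Strict monotonicity rules out two distinct zeros.

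For existence I would invoke a standard coercivity/surjectivity criterion for maximal monotone operators (e.g.\ the Browder--Minty-type result in \cite[Section~25]{Bauschke2011}): $\Ac_t$ is maximal monotone (noted in the text right after Assumption~A.\ref{as:A0}, via \cite[Corollary~25.5]{Bauschke2011}), so it suffices to show that $\Ac_t$ is coercive, i.e.\ that no zero escapes to the horizon. Here is where the hypothesis on the horizon cone of $\Omega := \intx{\Zc}\cap\dom{\Ac}$ enters. Fix any $\hat\zb\in\Omega$ with $\hat\ab\in\Ac(\hat\zb)$. For $\zb\in\Omega$ with $\ab_t\in\Ac_t(\zb)$, write $\ab_t = t\nabla F(\zb)+\ab$ with $\ab\in\Ac(\zb)$; monotonicity of $\Ac$ gives $\iprods{\ab-\hat\ab,\zb-\hat\zb}\ge 0$, so
\begin{equation*}
\iprods{\ab_t,\zb-\hat\zb} \ge t\iprods{\nabla F(\zb),\zb-\hat\zb} + \iprods{\hat\ab,\zb-\hat\zb}.
\end{equation*}
The key analytic fact is a monotonicity/growth property of the gradient of a self-concordant barrier: along any ray $\zb = \hat\zb + s\boldsymbol\omega$ heading toward the recession direction $\boldsymbol\omega$ of $\intx{\Zc}$, one has $\iprods{\nabla F(\zb),\boldsymbol\omega}\to+\infty$ (or at least stays bounded below away from $-\infty$), because $F$ itself blows up to $+\infty$ near $\bdry{\Zc}$ and is convex; combined with the hypothesis $\iprods{\hat\ab,\boldsymbol\omega}>0$ for each nonzero horizon direction $\boldsymbol\omega$, the right-hand side is forced to $+\infty$ as $\|\zb\|\to\infty$ within $\Omega$. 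This yields the coercivity estimate $\iprods{\ab_t,\zb-\hat\zb}/\|\zb-\hat\zb\|\to+\infty$, hence $\zerob\in\mathrm{range}(\Ac_t)$, i.e.\ $\zb^{\star}_t$ exists; the argument also confines $\zb^{\star}_t$ to $\intx{\Zc}\cap\dom{\Ac}$, as required.

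For the distance bound, I would use the defining inequality of a $\nu$-self-concordant barrier. At $\zb^{\star}_t$ we have $0\in t\nabla F(\zb^{\star}_t)+\Ac(\zb^{\star}_t)$, so $\eb := -t\nabla F(\zb^{\star}_t)\in\Ac(\zb^{\star}_t)$ is an admissible element in Definition~\ref{de:MVIP_approx_sol}, giving $\mathrm{dist}_{\zb^{\star}_t}(\zerob,\Ac(\zb^{\star}_t)) \le \|\eb\|_{\zb^{\star}_t}^{\ast} = t\,\|\nabla F(\zb^{\star}_t)\|_{\zb^{\star}_t}^{\ast}$. It remains to bound $\|\nabla F(\zb)\|_{\zb}^{\ast} = \iprods{\nabla^2 F(\zb)^{-1}\nabla F(\zb),\nabla F(\zb)}^{1/2}\le\sqrt{\nu}$; this follows by taking the supremum in Definition~\ref{de:self_con_barrier} at $\ub = \nabla^2 F(\zb)^{-1}\nabla F(\zb)$, for which $2\iprods{\nabla F(\zb),\ub}-\|\ub\|_{\zb}^2 = \iprods{\nabla^2 F(\zb)^{-1}\nabla F(\zb),\nabla F(\zb)} = (\|\nabla F(\zb)\|_{\zb}^{\ast})^2 \le \nu$. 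Combining the two displays gives $\mathrm{dist}_{\zb^{\star}_t}(\zerob,\Ac(\zb^{\star}_t))\le t\sqrt{\nu}$, and the $\varepsilon$-solution claim is then immediate.

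I expect the main obstacle to be the existence half --- specifically, making rigorous the claim that the horizon-cone hypothesis on $\Ac$ together with the barrier property of $F$ forces $\iprods{\ab_t,\zb-\hat\zb}$ to grow without bound along every escaping sequence. One must handle sequences $\{\zb^k\}\subset\Omega$ with $\|\zb^k\|\to\infty$ by passing to the limit of $\zb^k/\|\zb^k\|$ (a horizon direction $\boldsymbol\omega$), carefully distinguishing the cases where $\{\zb^k\}$ stays a bounded distance from $\bdry{\Zc}$ versus approaches it, and invoke the precise recession behavior of $\nabla F$ for self-concordant barriers (cf.\ \cite[Chapter~2]{Nesterov1994}); the uniqueness and the distance bound are comparatively routine.
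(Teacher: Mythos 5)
Your proof of uniqueness (strict monotonicity of $t\nabla F + \Ac$) and of the distance bound $\mathrm{dist}_{\zb^{\star}_t}(\zerob,\Ac(\zb^\star_t))\le t\sqrt\nu$ (via $-t\nabla F(\zb^\star_t)\in\Ac(\zb^\star_t)$ together with $\|\nabla F(\zb)\|_\zb^\ast\le\sqrt\nu$) are correct and coincide with the paper's argument. The existence half, however, has a genuine gap, and you have correctly flagged it as the place where trouble lies.

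Two problems occur. First, the barrier fact you want is not what you claim. Along a ray $\zb_\tau = \hat\zb + \tau\boldsymbol\omega$ in a recession direction of $\intx{\Zc}$ the point never approaches $\bdry{\Zc}$, so $F$ does not blow up; indeed for $F(\zb)=-\log\zb$ and $\boldsymbol\omega=1$ one has $\iprods{\nabla F(\zb_\tau),\boldsymbol\omega}=-1/\zb_\tau\to 0^{-}$. The correct statement is the \emph{semi-boundedness} property of a $\nu$-self-concordant barrier, $\iprods{\nabla F(\zb_\tau),\boldsymbol\omega}=\iprods{\nabla F(\zb_\tau),\tau^{-1}(\zb_\tau-\hat\zb)}\ge -\tau^{-1}\nu$ (\cite[Thm.~4.2.4]{Nesterov2004}), so the barrier contribution vanishes rather than diverges; the positivity must come entirely from $\Ac$ via $\iprods{\hat\ab,\boldsymbol\omega}>0$ and monotonicity. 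Second, and more seriously, you fix a single anchor $\hat\zb$ before $\boldsymbol\omega$, but the lemma's hypothesis only promises an anchor $\hat\zb=\hat\zb(\boldsymbol\omega)$ depending on the direction; with a fixed anchor you cannot invoke $\iprods{\hat\ab,\boldsymbol\omega}>0$. Even setting that aside, the estimate you get is $\iprods{\ab_t,\zb_\tau-\hat\zb}/\|\zb_\tau-\hat\zb\| \to \iprods{\hat\ab,\boldsymbol\omega}/\|\boldsymbol\omega\|$, a finite positive constant, not $+\infty$; so the normalized coercivity required by a Browder--Minty-type surjectivity result does \emph{not} follow. The paper sidesteps this by invoking \cite[Thm.~12.51]{Rockafellar1997}, whose hypothesis is exactly the per-direction, existential condition in the lemma: for each nonzero horizon direction $\boldsymbol\omega$, find some $\zb$ with some $\vb\in\Ac_t(\zb)$ satisfying $\iprods{\vb,\boldsymbol\omega}>0$. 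The paper builds this witness explicitly as $\zb_\tau=\hat\zb(\boldsymbol\omega)+\tau\boldsymbol\omega$ with $\tau$ large, uses monotonicity of $\Ac$ to get $\iprods{\ab_\tau,\boldsymbol\omega}\ge\iprods{\hat\ab,\boldsymbol\omega}>0$, and adds the semi-boundedness bound above. It also verifies carefully that $\zb_\tau\in\intx{\Zc}\cap\dom{\Ac}$, which uses Assumption~A.\ref{as:A0}(b) that $\dom{\Ac}$ is open or closed --- a step your sketch omits. In short, replace the coercivity criterion by the tailored recession criterion of Rockafellar--Wets and the argument closes cleanly.
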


The assumption in Lemma \ref{le:existence_sol_m} is quite general. There are two special cases in which this assumption holds.
First, if \ShuR{$\intx{\Zc}\cap\dom{\Ac}$} is bounded, then the only element in the horizon cone of \ShuR{$\intx{\Zc}\cap\dom{\Ac}$} is $\boldsymbol{0}$ and the assumption trivially holds.
Second, if the solution set $\Zopt$ of \eqref{eq:mono_inclusion} is nonempty and bounded, and \ShuR{the set-valued map $\Ac$ is continuous at points in $\bdry{\Zc \cap \dom{\Ac}}$ relative to $\Zc\cap \dom{\Ac}$, then this assumption also holds as can be shown using \cite[Theorem 12.51]{Rockafellar1997}. Here, $\bdry{\Zc}$ stands for the boundary of $\Zc$, and we refer to \cite[Definition 5.4]{Rockafellar1997} for the definition of the continuity of a set-valued map.}

% 3.2.c. Generalized gradient mapping.
\paragraph{\bf Generalized gradient mapping:}
\aftersubsubsec
Fix $\hat{\zb}\in\intx{\Zc}$ with $\nabla^2{F}(\hat{\zb}) \succ 0$, we consider the following linear monotone inclusion in $\sb$:
\begin{equation}\label{eq:s_mapping_inc}
0 \in t\nabla{F}(\zb) + t\nabla^2{F}(\hat{\zb})(\sb  - \zb) + \Ac(\sb).
\end{equation}
If we take $\hat{\zb} = \zb$, then it becomes a linearization (with respect to $\nabla{F}$) of \eqref{eq:barrier_mono_inc} at a given point $\zb$.
It is obvious that \eqref{eq:s_mapping_inc} is strongly monotone and maximally, its solution exists and is unique.
We denote this solution by $\sb_{\hat{\zb}}(\zb; t)$, and, by using $\Pc_{\hat{\zb}}(\cdot; t)$, it can be written as
\begin{equation}\label{eq:s_mapping}
\sb_{\hat{\zb}}(\zb; t) := \Pc_{\hat{\zb}}\left(\zb - \nabla^2{F}(\hat{\zb})^{-1}\nabla{F}(\zb); t \right).
\end{equation}
Next, we define the following mapping
\begin{equation}\label{eq:gradient_mapping}
G_{\hat{\zb}}(\zb; t) := \nabla^2{F}(\hat{\zb})\left(\zb - \sb_{\hat{\zb}}(\zb; t)\right) \equiv \nabla^2{F}(\hat{\zb})\left(\zb - \Pc_{\hat{\zb}}\left(\zb - \nabla^2{F}(\hat{\zb})^{-1}\nabla{F}(\zb); t \right)\right),
\end{equation}
When $\Ac = 0$, $G_{\hat{\zb}}(\zb; t) = \nabla{F}(\zb)$, which is exactly the gradient of $F$.
Then, we adopt the name in \cite{Nesterov2004} to call $G_{\hat{\zb}}(\cdot; t)$ a generalized gradient mapping.

Given $G_{\zb}(\zb; t)$ as in \eqref{eq:gradient_mapping} with $\hat{\zb} = \zb$, we define the following generalized Newton decrement \Shu{$\lambda_t(\zb)$} to analyze the convergence of generalized Newton-type methods below:\begin{equation}\label{eq:nt_decrement}
\lambda_t(\zb) := \Vert G_{\zb}(\zb; t) \Vert_{\zb}^{\ast} =  \Vert \zb - \Pc_{\zb}\left(\zb - \nabla^2{F}(\zb)^{-1}\nabla{F}(\zb); t\right)\Vert_{\zb}.
\end{equation}
If $\Ac(\zb) = \cb$, a constant operator, then $\lambda_t(\zb) = \Vert t^{-1}\cb + \nabla{F}(\zb)\Vert_{\zb}^{\ast}$, which is exactly the Newton decrement defined in \cite[Formula 4.2.16]{Nesterov2004}.

To conclude, we summarize the result of this subsection in the following lemma.
This result is a direct consequence of the definition of $G_{\zb}(\cdot; t)$ and $\lambda_t(\cdot)$. We omit the proof.
%%% Lemma 2.
\begin{lemma}\label{le:gradient_mapping}
The solution $\sb_{\hat{\zb}}(\cdot; t)$ of \eqref{eq:s_mapping_inc} exists and is unique for any $\zb\in\dom{F}$.
Consequently, $G_{\hat{\zb}}(\cdot; t)$ given by \eqref{eq:gradient_mapping} is well-defined on $\dom{F}$.

Let $\zb^{\star}_t\in \Shu{\intx{\Zc}}$ be a given point and $\lambda_t(\cdot)$ be defined by \eqref{eq:nt_decrement}.
Then, $\lambda_t(\zb^{\star}_t) = 0$ if and only if $\zb^{\star}_t$ is a solution to \eqref{eq:barrier_mono_inc}.
\end{lemma}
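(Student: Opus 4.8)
The statement splits into two independent claims about the objects $\sb_{\hat{\zb}}(\cdot;t)$, $G_{\hat{\zb}}(\cdot;t)$ and $\lambda_t(\cdot)$; I would treat them in that order. For the first claim, fix $\zb\in\dom{F}=\intx{\Zc}$ and $\hat{\zb}\in\intx{\Zc}$, so that $\nabla^2 F(\hat{\zb})\in\Sc^p_{++}$ by self-concordance. Rewrite \eqref{eq:s_mapping_inc} as $0\in \nabla^2 F(\hat{\zb})(\sb-\zb) + t^{-1}\Ac(\sb) + \nabla{F}(\zb)$, i.e. $0 \in \Qb(\sb-\zb') + t^{-1}\Ac(\sb)$ with $\Qb := \nabla^2 F(\hat{\zb})$ and $\zb' := \zb - \Qb^{-1}\nabla F(\zb)$. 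Since $\Ac$ is maximally monotone and $\Qb\succ 0$, the operator $\sb\mapsto \Qb(\sb-\zb') + t^{-1}\Ac(\sb)$ is maximally monotone and strongly monotone (with modulus controlled by the smallest eigenvalue of $\Qb$); hence its zero exists and is unique. Concretely this zero is exactly $J_{(t\Qb)^{-1}\Ac}(\zb') = \Pc_{\hat{\zb}}(\zb';t)$ by the definition \eqref{eq:resolvent}–\eqref{eq:Pc_oper}, which is the formula \eqref{eq:s_mapping}. Well-definedness of $G_{\hat{\zb}}(\cdot;t)$ on $\dom F$ then follows immediately, since it is just $\Qb(\zb - \sb_{\hat{\zb}}(\zb;t))$, a composition of the now–well-defined single-valued map $\sb_{\hat{\zb}}(\cdot;t)$ with a linear map.

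For the second claim, specialize $\hat{\zb}=\zb=\zb^{\star}_t$ and abbreviate $\Qb := \nabla^2 F(\zb^{\star}_t)\succ 0$. By \eqref{eq:nt_decrement}, $\lambda_t(\zb^{\star}_t) = \Vert \zb^{\star}_t - \Pc_{\zb^{\star}_t}(\zb^{\star}_t - \Qb^{-1}\nabla F(\zb^{\star}_t);t)\Vert_{\zb^{\star}_t}$, and since $\Vert\cdot\Vert_{\zb^{\star}_t}$ is a genuine norm (as $\Qb\succ 0$), $\lambda_t(\zb^{\star}_t)=0$ if and only if $\zb^{\star}_t = \Pc_{\zb^{\star}_t}(\zb^{\star}_t - \Qb^{-1}\nabla F(\zb^{\star}_t);t)$, i.e. $\zb^{\star}_t$ is a fixed point of $R_{\zb^{\star}_t}$ in the sense of \eqref{eq:fixed_point_expression}. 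Now unfold this fixed-point identity through \eqref{eq:resolvent}: writing $\wb := \zb^{\star}_t$ and $\zb := \zb^{\star}_t - \Qb^{-1}\nabla F(\zb^{\star}_t)$, the membership $\wb = J_{(t\Qb)^{-1}\Ac}(\zb)$ is equivalent to $0 \in t\Qb(\wb-\zb) + \Ac(\wb)$, which upon substituting $\Qb(\wb-\zb)=\nabla F(\zb^{\star}_t)$ collapses to $0 \in t\nabla F(\zb^{\star}_t) + \Ac(\zb^{\star}_t)$ — precisely \eqref{eq:barrier_mono_inc}. Running this chain of equivalences in both directions gives the "if and only if."

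There is essentially no hard obstacle here; the statement is deliberately a bookkeeping lemma. The only points requiring minor care are (i) confirming that $\dom F = \intx{\Zc}$ guarantees $\nabla^2 F$ is positive definite at every relevant point — this is built into the definition of a self-concordant barrier together with the nondegeneracy convention used in \eqref{eq:local_norm} — so that both the resolvent $\Pc_{\hat{\zb}}(\cdot;t)$ and the local norm $\Vert\cdot\Vert_{\zb}$ are legitimate; and (ii) making sure the algebraic substitution $\Qb(\wb-\zb) = \nabla F(\zb^{\star}_t)$ in the last step is applied on the correct side of the inclusion. Given these, the proof is a two–line verification in each direction, which is presumably why the authors write "We omit the proof."
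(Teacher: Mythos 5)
Your proposal is correct and matches what the paper intends when it calls the result ``a direct consequence of the definition of $G_{\zb}(\cdot;t)$ and $\lambda_t(\cdot)$'': existence/uniqueness from strong monotonicity of $\sb\mapsto t\nabla^2 F(\hat{\zb})(\sb-\zb')+\Ac(\sb)$ (equivalently, from the well-definedness of the scaled resolvent noted after \eqref{eq:resolvent}), and the characterization of $\lambda_t(\zb^\star_t)=0$ by unfolding the resolvent identity to recover \eqref{eq:barrier_mono_inc}. The algebra in both directions checks out, and the caveats you flag (positive definiteness of $\nabla^2 F$ on $\intx{\Zc}$, correct side of the substitution) are exactly the right ones to mention.
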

In the sequel, we only work with the solution $\sb_{\hat{\zb}}(\zb; t)$ of \eqref{eq:s_mapping_inc} which exists and is unique.
However, we assume throughout this paper that the assumptions of Lemma~\ref{le:existence_sol_m} hold so that the solution $\zb^{\star}_t$ of \eqref{eq:barrier_mono_inc} exists and is unique for each $t > 0$.
We do not use $\zb^{\star}_t$ of \eqref{eq:barrier_mono_inc} at any step of our algorithms.
Since $\zb^{\star}_t$ is on the central path of \eqref{eq:mono_inclusion} at $t > 0$, $\zb^{\star}_t\in\Zc$.
If $t >0 $ is sufficiently small, e.g., $t := \varepsilon/\sqrt{\nu}$, then we can say that $\zb^{\star}_t$ is also an $\varepsilon$-solution of \eqref{eq:mono_inclusion} as stated in Lemma~\ref{le:existence_sol_m} in the sense of Definition~\ref{de:MVIP_approx_sol}.

%%% 3.2. Newton-type methods.
\beforesubsec
\subsection{\bf Inexact generalized Newton-type schemes}
\aftersubsec
The main step  of the generalized Newton method is presented as follows:
For a fixed value $t > 0$, and a given iterate $\zb\in\intx{\Zc}$, we approximate $F$ by its Taylor's expansion and define
\begin{equation}\label{eq:quad_surrogate}
\widehat{\Ac}_t(\wb; \zb) := t \left[ \nabla{F}(\zb) + \nabla^2{F}(\zb)(\wb - \zb)\right] + \Ac(\wb).
\end{equation}
Since $\nabla^2{F}(\zb) \succ 0$, we can compute the unique solution of the linearized inclusion:
\begin{equation}\label{eq:zero_k_set}
\sb_{\zb}(\zb;t) := \big\{\wb\in \intx{\Zc} \mid 0 \in \widehat{\Ac}_t(\wb; \zb) \big\} \equiv (\widehat{\Ac}_t(\cdot; \zb))^{-1}(\boldsymbol{0}).
\end{equation}
Computing $\sb_{\zb}(\zb; t)$ exactly is often impractical, \Shu{so we allow} one to approximate it as follows. %this exact solution in the following sense.

%%% Definition 3.1.
\begin{definition}\label{de:approx_sol}
\textit{
Given an accuracy $\delta\in [0, 1)$, we say that $\zb_{+}$ is a $\delta$-approximation to the true solution $\bar{\zb}_{+} := \sb_{\zb}(\zb; t)$ defined in \eqref{eq:zero_k_set}  $($and is denoted by $\zb_{+} \approx \bar{\zb}_{+}$$)$ if
\vspace{-0.75ex}
\begin{equation}\label{eq:approx_sol}
\mathrm{dist}_{\zb}\big(\boldsymbol{0}, \Shu{\widehat{\Ac}_t(\zb_{+}; \zb)} \big) = \min_{\eb}\set{ \Vert\eb\Vert_{\zb}^{\ast} \mid \eb \in\widehat{\Ac}_t(\zb_{+};\zb) } \leq t\delta.
\vspace{-0.75ex}
\end{equation}
}
\end{definition}
First, we show that, under \eqref{eq:approx_sol}, we have $\Vert \zb_{+} - \bar{\zb}_{+} \Vert_{\zb} \leq \delta$.
Next, since we are working with the linearization \eqref{eq:zero_k_set} of \eqref{eq:barrier_mono_inc},
the following lemma, whose proof is in Appendix~\ref{apdx:le:sol_measure}, shows that an approximate solution of  \eqref{eq:zero_k_set} is also an approximate solution of problem \eqref{eq:mono_inclusion}.

% Lemma 3.6
\begin{lemma}\label{le:sol_measure}
\Shu{Let} $\zb_{+}$ be a $\delta$-approximation solution \Shu{to $\bar{\zb}_{+}$} of \eqref{eq:zero_k_set} in the sense of Definition~\ref{de:approx_sol}.
Then, we have $\Vert \zb_{+} - \bar{\zb}_{+} \Vert_{\zb} \leq \delta$.
Furthermore, if $\lambda_t(\zb) + \delta < 1$, then
\vspace{-0.75ex}
\begin{equation}\label{eq:sol_measure}
\mathrm{dist}_{\zb_{+}}(\boldsymbol{0}, \Ac_{\Zc}(\zb_{+})) \leq (1- \lambda_t(\zb) - \delta)^{-1}\left(\sqrt{\nu} + \lambda_t(\zb) + 2\delta \right)t.
\vspace{-0.75ex}
\end{equation}
If we choose $t > 0$ such that $t  \leq (1-\lambda_t(\zb) - \delta)\left(\sqrt{\nu} + \lambda_t(\zb) + 2\delta \right)^{-1}\varepsilon$, then $\zb_{+}$ is an $\varepsilon$-solution to \eqref{eq:mono_inclusion} in the sense of Definition~\ref{de:MVIP_approx_sol}.
\end{lemma}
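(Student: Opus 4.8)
The plan is to establish the two assertions of Lemma~\ref{le:sol_measure} in sequence, starting from Definition~\ref{de:approx_sol} and propagating the error through the self-concordance estimates for $F$.

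\textbf{Step 1: The bound $\Vert \zb_{+} - \bar{\zb}_{+}\Vert_{\zb} \leq \delta$.} I would use the strong monotonicity of the linearized operator. By definition, $\bar{\zb}_{+} = \sb_{\zb}(\zb;t)$ solves $0 \in \widehat{\Ac}_t(\bar{\zb}_{+};\zb)$, while \eqref{eq:approx_sol} yields some $\eb \in \widehat{\Ac}_t(\zb_{+};\zb)$ with $\Vert\eb\Vert_{\zb}^{\ast} \leq t\delta$. Since $\widehat{\Ac}_t(\cdot;\zb) = t\nabla^2 F(\zb)(\cdot - \zb) + t\nabla F(\zb) + \Ac(\cdot)$ and $\Ac$ is monotone, the monotonicity pairing applied to $\zb_{+}$ and $\bar{\zb}_{+}$ gives $\iprods{\eb - \boldsymbol{0}, \zb_{+} - \bar{\zb}_{+}} \geq t\Vert\zb_{+} - \bar{\zb}_{+}\Vert_{\zb}^2$. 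Then Cauchy--Schwarz in the local norm, $\iprods{\eb, \zb_{+}-\bar{\zb}_{+}} \leq \Vert\eb\Vert_{\zb}^{\ast}\Vert\zb_{+}-\bar{\zb}_{+}\Vert_{\zb} \leq t\delta\Vert\zb_{+}-\bar{\zb}_{+}\Vert_{\zb}$, and dividing through gives the claim.

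\textbf{Step 2: The residual bound \eqref{eq:sol_measure}.} Since $\zb_{+}\in\intx{\Zc}$, we have $\Ac_{\Zc}(\zb_{+}) = \Ac(\zb_{+})$, so I must bound $\min\set{\Vert\eb'\Vert_{\zb_{+}}^{\ast} \mid \eb'\in\Ac(\zb_{+})}$. The natural candidate is $\eb' := \eb - t\nabla F(\zb) - t\nabla^2 F(\zb)(\zb_{+}-\zb) \in \Ac(\zb_{+})$, where $\eb$ is as in Step 1. Then $\eb' = \eb - t\big[\nabla F(\zb) + \nabla^2 F(\zb)(\zb_{+}-\zb)\big]$. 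I would bound $\Vert\eb'\Vert_{\zb_{+}}^{\ast}$ by first switching from the $\zb$-norm to the $\zb_{+}$-norm using the self-concordance distortion inequality (which needs $\Vert\zb_{+}-\zb\Vert_{\zb} < 1$; note $\Vert\zb_{+}-\zb\Vert_{\zb} \leq \Vert\zb_{+}-\bar{\zb}_{+}\Vert_{\zb} + \Vert\bar{\zb}_{+}-\zb\Vert_{\zb} \leq \delta + \lambda_t(\zb) < 1$ by hypothesis, recognizing $\lambda_t(\zb) = \Vert\zb - \bar{\zb}_{+}\Vert_{\zb}$ from \eqref{eq:nt_decrement}), which contributes the factor $(1-\lambda_t(\zb)-\delta)^{-1}$. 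Then I split $\Vert\eb'\Vert_{\zb}^{\ast} \leq \Vert\eb\Vert_{\zb}^{\ast} + t\Vert\nabla F(\zb) + \nabla^2 F(\zb)(\zb_{+}-\zb)\Vert_{\zb}^{\ast}$. The first term is $\leq t\delta$. For the second term, write $\nabla F(\zb) + \nabla^2 F(\zb)(\zb_{+}-\zb) = \nabla F(\zb) + \nabla^2 F(\zb)(\bar{\zb}_{+}-\zb) + \nabla^2 F(\zb)(\zb_{+}-\bar{\zb}_{+})$; the middle expression has dual norm $\lambda_t(\zb)$ by \eqref{eq:nt_decrement} and \eqref{eq:gradient_mapping}, and the last has dual norm $\Vert\zb_{+}-\bar{\zb}_{+}\Vert_{\zb} \leq \delta$. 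Collecting: $\Vert\eb'\Vert_{\zb}^{\ast} \leq t(\delta + \lambda_t(\zb) + \delta) = t(\lambda_t(\zb) + 2\delta)$. Wait — I still need a $\sqrt{\nu}$; that should come from the self-concordant barrier property $\Vert\nabla F(\zb)\Vert_{\zb}^{\ast} \leq \sqrt{\nu}$ applied carefully, so I would instead bound $t\Vert\nabla F(\zb)\Vert_{\zb}^{\ast} \leq t\sqrt{\nu}$ separately and combine with the linearization term $t\Vert\nabla^2 F(\zb)(\bar{\zb}_{+}-\zb) + \nabla^2 F(\zb)(\zb_{+}-\bar{\zb}_{+})\Vert_{\zb}^{\ast}$; a more efficient grouping directly yields $\Vert\eb'\Vert_{\zb}^{\ast} \leq t(\sqrt{\nu} + \lambda_t(\zb) + 2\delta)$ after identifying the generalized gradient mapping term. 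Applying the norm-distortion factor then gives \eqref{eq:sol_measure}.

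\textbf{Step 3: The $\varepsilon$-solution conclusion.} This is immediate: if $t \leq (1-\lambda_t(\zb)-\delta)(\sqrt{\nu}+\lambda_t(\zb)+2\delta)^{-1}\varepsilon$, then the right-hand side of \eqref{eq:sol_measure} is at most $\varepsilon$, so $\dist{\zb_{+}}{\boldsymbol{0},\Ac_{\Zc}(\zb_{+})} \leq \varepsilon$, which is exactly Definition~\ref{de:MVIP_approx_sol}.

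The main obstacle I anticipate is the careful bookkeeping in Step~2: correctly identifying which residual vector lies in $\Ac(\zb_{+})$, and then splitting its dual norm into exactly the three pieces $\sqrt{\nu}$ (barrier bound on $\nabla F(\zb)$), $\lambda_t(\zb)$ (the Newton decrement, via the generalized gradient mapping $G_{\zb}(\zb;t)$), and $2\delta$ (two applications of Step~1's inexactness estimate), while simultaneously tracking the admissibility condition $\Vert\zb_{+}-\zb\Vert_{\zb} < 1$ that licenses the self-concordance norm-comparison producing the $(1-\lambda_t(\zb)-\delta)^{-1}$ factor. Getting the constants to match the stated formula exactly, rather than a loose variant, is the delicate part.
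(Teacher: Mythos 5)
Your proposal is correct and follows essentially the same three-step argument as the paper's proof: strong monotonicity of $\widehat{\Ac}_t(\cdot;\zb)$ plus Cauchy--Schwarz for $\Vert\zb_+-\bar{\zb}_+\Vert_\zb\le\delta$; the residual $\eb - t[\nabla F(\zb)+\nabla^2F(\zb)(\zb_+-\zb)]\in\Ac(\zb_+)$ split into the $\sqrt{\nu}$, $\lambda_t(\zb)$, and $2\delta$ pieces in the $\zb$-dual norm, followed by the distortion factor $(1-\lambda_t(\zb)-\delta)^{-1}$ to pass to the $\zb_+$-norm; and the choice of $t$ for the $\varepsilon$-solution. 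The one thing you leave implicit that the paper flags explicitly (by citing Theorem~\ref{th:main_estimate1}) is the justification that $\zb_+\in\intx{\Zc}$ --- but you have already shown $\Vert\zb_+-\zb\Vert_\zb\le\lambda_t(\zb)+\delta<1$, which by the Dikin ellipsoid containment gives $\zb_+\in\intx{\Zc}$ directly, so no gap remains once you state that connection.
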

We now investigate the convergence of the inexact full-step, damped-step, and path-following generalized Newton methods.

%%% (a) Key estimate
\beforesubsubsec
\subsubsection{\Shu{A key} estimate}
\aftersubsubsec
The following theorem provides a key estimate to analyze the convergence of the generalized Newton-type schemes above, whose proof can be found in Appendix \ref{apdx:th:main_estimate1}.

% Theorem 3.1.
\begin{theorem}\label{th:main_estimate1}
For a given $\zb\in\Shu{\intx{\Zc}}$, let $\zb_{+}$ be the point generated by the inexact generalized Newton scheme $($in the sense of Definition \ref{de:approx_sol}$)$:
\begin{equation}\label{eq:full_step_Newton}
\zb_{+} \approx \bar{\zb}_{+} := \Pc_{\zb}\left(\zb  - \nabla^2{F}(\zb)^{-1}\nabla{F}(\zb); t_{+}\right).
\end{equation}
Then, if $\lambda_{t_{+}}(\zb) + \delta(\zb) < 1$, where $\lambda_{t_{+}}(\zb)$ is defined by \eqref{eq:nt_decrement} and $\delta(\zb) := \Vert \zb_{+} - \bar{\zb}_{+}\Vert_{\zb}$, then
$\zb_{+} \in \intx{\Zc}$ and the following estimate holds:
\begin{equation}\label{eq:key_estimate1}
\lambda_{t_{+}}(\zb_{+}) \leq  \left(\frac{\lambda_{t_{+}}(\zb) + \delta(\zb)}{1 - \lambda_{t_{+}}(\zb) - \delta(\zb)}\right)^2 + \frac{\delta(\zb)}{(1-\lambda_{t_{+}}(\zb) - \delta(\zb))^3}.
\end{equation}
Moreover, the right-hand side of \eqref{eq:key_estimate1} is monotonically increasing w.r.t. $\lambda_{t_{+}}(\zb)$ and $\delta(\zb)$.
\end{theorem}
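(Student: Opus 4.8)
The plan is to bound $\lambda_{t_+}(\zb_+)$ by comparing it to the analogous quantity at the exact Newton iterate $\bar{\zb}_+$, and to exploit the self-concordance of $F$ to transfer estimates between the local norms at $\zb$, $\zb_+$ and $\bar{\zb}_+$. First I would record the two defining inclusions. By \eqref{eq:s_mapping_inc}--\eqref{eq:s_mapping} with $\hat{\zb}=\zb$, the exact point $\bar{\zb}_+$ satisfies $0\in t_+\nabla F(\zb)+t_+\nabla^2F(\zb)(\bar{\zb}_+-\zb)+\Ac(\bar{\zb}_+)$, i.e. there is $\bar{\ab}\in\Ac(\bar{\zb}_+)$ with $\bar{\ab}=-t_+[\nabla F(\zb)+\nabla^2F(\zb)(\bar{\zb}_+-\zb)]$. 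On the other hand $\lambda_{t_+}(\zb_+)=\Vert G_{\zb_+}(\zb_+;t_+)\Vert^{*}_{\zb_+}$, and by definition of $G$ and the resolvent there is $\ab_+\in\Ac(\sb_{\zb_+}(\zb_+;t_+))$ realizing this. The standard trick (as in the smooth case, Nesterov's Theorem 4.2.\,-type argument) is to \emph{test feasibility of $\bar{\ab}$ in the linearized inclusion at $\zb_+$}: plugging $\bar{\zb}_+$ as a candidate solution of $0\in\widehat{\Ac}_{t_+}(\cdot;\zb_+)$ produces a residual $\eb := t_+[\nabla F(\zb_+)+\nabla^2F(\zb_+)(\bar{\zb}_+-\zb_+)]+\bar{\ab}$, and by the same fixed-point/resolvent reasoning that underlies Lemma~\ref{le:sol_measure}, $\lambda_{t_+}(\zb_+)\le \Vert t_+^{-1}\eb\Vert^{*}_{\zb_+}$ (up to controlling the passage from $\zb_+$ to the nearby true linearized solution, which is where the $1-\lambda-\delta$ denominators enter).

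Next I would substitute $\bar{\ab}=-t_+[\nabla F(\zb)+\nabla^2F(\zb)(\bar{\zb}_+-\zb)]$ into $\eb$ to get
$t_+^{-1}\eb = \nabla F(\zb_+)-\nabla F(\zb)-\nabla^2F(\zb)(\zb_+-\zb) + [\nabla^2F(\zb_+)-\nabla^2F(\zb)](\bar{\zb}_+-\zb_+).$
The first group is exactly the second-order Taylor remainder of $\nabla F$ between $\zb$ and $\zb_+$; by standard self-concordance estimates (e.g. Nesterov--Nemirovskii, Theorem 2.1.1, or \cite{Nesterov2004}, inequality 4.1.\,$\star$), if $r:=\Vert\zb_+-\zb\Vert_{\zb}<1$ then $\Vert\nabla F(\zb_+)-\nabla F(\zb)-\nabla^2F(\zb)(\zb_+-\zb)\Vert^{*}_{\zb}\le \frac{r^2}{1-r}$, and $\Vert\cdot\Vert^{*}_{\zb_+}\le (1-r)^{-1}\Vert\cdot\Vert^{*}_{\zb}$. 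For the second group I would use the operator inequality $(1-r)^2\nabla^2F(\zb)\preceq\nabla^2F(\zb_+)\preceq(1-r)^{-2}\nabla^2F(\zb)$ to bound $\Vert[\nabla^2F(\zb_+)-\nabla^2F(\zb)]\hb\Vert^{*}_{\zb_+}$ by a factor times $\Vert\hb\Vert_{\zb_+}$. The quantity $r=\Vert\zb_+-\zb\Vert_\zb$ itself is controlled by the triangle inequality: $\Vert\zb_+-\zb\Vert_\zb\le\Vert\zb_+-\bar{\zb}_+\Vert_\zb+\Vert\bar{\zb}_+-\zb\Vert_\zb=\delta(\zb)+\lambda_{t_+}(\zb)$, using Lemma~\ref{le:sol_measure} for the first term and the identity $\Vert\bar{\zb}_+-\zb\Vert_\zb=\lambda_{t_+}(\zb)$ from \eqref{eq:nt_decrement}. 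Assembling these pieces, writing $\mu:=\lambda_{t_+}(\zb)+\delta(\zb)$, gives a bound of the shape $\lambda_{t_+}(\zb_+)\le \frac{1}{1-\mu}\big(\frac{\mu^2}{1-\mu}+\text{(curvature term)}\big)$, and keeping careful track of which term carries the extra $\delta(\zb)$ (the inexactness residual, which is not absorbed into the Taylor remainder) yields the two summands in \eqref{eq:key_estimate1}: the $\big(\frac{\mu}{1-\mu}\big)^2$ from the quadratic Newton contraction and the $\frac{\delta(\zb)}{(1-\mu)^3}$ from propagating the residual through the norm-change factors. The claim $\zb_+\in\intx{\Zc}$ follows because $\Vert\zb_+-\zb\Vert_\zb\le\mu<1$ forces $\zb_+$ into the Dikin ellipsoid around $\zb$, which lies in $\intx{\Zc}$ by self-concordance. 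The final monotonicity remark is immediate: the right-hand side is a composition of increasing functions of $\mu=\lambda_{t_+}(\zb)+\delta(\zb)$ on $[0,1)$, hence increasing in each argument separately.

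The main obstacle will be the bookkeeping in the first step — rigorously justifying the inequality $\lambda_{t_+}(\zb_+)\le\Vert t_+^{-1}\eb\Vert^{*}_{\zb_+}$, i.e. that the distance from $0$ to $\widehat{\Ac}_{t_+}(\zb_+;\zb_+)$ controls the Newton decrement at $\zb_+$. This is where monotonicity of $\Ac$ (hence of the linearized operator $\widehat{\Ac}_{t_+}(\cdot;\zb_+)$, which is strongly monotone with modulus $t_+$ in the $\Vert\cdot\Vert_{\zb_+}$-geometry) must be invoked, exactly as in the proof of Lemma~\ref{le:sol_measure}: feasibility of the perturbed point $\bar{\zb}_+$ with residual $\eb$ implies $\Vert\bar{\zb}_+-\sb_{\zb_+}(\zb_+;t_+)\Vert_{\zb_+}\le\Vert t_+^{-1}\eb\Vert^{*}_{\zb_+}$, and then the triangle inequality with $\Vert\zb_+-\bar{\zb}_+\Vert_{\zb}$ relates $\Vert\zb_+-\sb_{\zb_+}(\zb_+;t_+)\Vert_{\zb_+}=\lambda_{t_+}(\zb_+)$ to it. Handling the change of base point from $\Vert\cdot\Vert_{\zb}$ to $\Vert\cdot\Vert_{\zb_+}$ in that triangle inequality is what generates the cubic denominator and must be done with care; everything else is routine self-concordant calculus.
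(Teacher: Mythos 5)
Your proposal is correct and follows essentially the same route as the paper's proof in Appendix~\ref{apdx:th:main_estimate1}: both insert $\bar{\zb}_{+}$ into the linearized inclusion at $\zb_{+}$, reduce the resulting residual to a Taylor remainder of $\nabla F$ plus a Hessian-difference term plus the inexactness $\zb_{+}-\bar{\zb}_{+}$, and then propagate through the local-norm change $(1-\Vert\zb_{+}-\zb\Vert_{\zb})^{-1}$. The only cosmetic difference is that you invoke strong monotonicity of $\widehat{\Ac}_{t_{+}}(\cdot;\zb_{+})$ directly (à la Lemma~\ref{le:sol_measure}) whereas the paper packages the same fact as the non-expansiveness of the scaled resolvent $\Pc_{\zb_{+}}$ in \eqref{eq:Pc_oper_property}; these are equivalent, and you correctly flag in your closing paragraph that the preliminary bound $\lambda_{t_{+}}(\zb_{+})\le\Vert t_{+}^{-1}\eb\Vert^{*}_{\zb_{+}}$ must be supplemented with the extra triangle-inequality term $\Vert\zb_{+}-\bar{\zb}_{+}\Vert_{\zb_{+}}$, which is exactly how the $\delta(\zb)/(1-\mu)^{3}$ term assembles.
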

Clearly, if $\zb_{+} = \bar{\zb}_{+} $ (i.e., the subproblem \eqref{eq:full_step_Newton} is solved exactly), then \eqref{eq:key_estimate1} reduces to
\begin{equation}\label{eq:key_estimate1b}
\lambda_{t_{+}}(\zb_{+}) \leq  \left(\frac{\lambda_{t_{+}}(\zb)}{1 - \lambda_{t_{+}}(\zb)}\right)^2,
\end{equation}
which is in the form of \cite[Theorem 4.1.14]{Nesterov2004}, but for the exact variant of \eqref{eq:full_step_Newton}.

%% Quadratic convergence region.
\beforesubsubsec
\subsubsection{Neighborhood of the central path and quadratic convergence region}
\aftersubsubsec
Given the generalized Newton decrement $\lambda_t(\cdot)$ defined by \eqref{eq:nt_decrement}, we  consider the following set
\begin{equation}\label{eq:quad_converg_region}
\Omega_{t}(\beta) := \set{\zb\in\intx{\Zc} \mid \lambda_t(\zb) \leq \beta},
\end{equation}
where $\beta \in (0, 1)$.
We call $\Omega_{t}(\beta)$ a neighborhood  of the central path of \eqref{eq:barrier_mono_inc} with the radius $\beta$.

If we can choose $\beta \in (0, 1)$ such that:
\begin{itemize}
\vspace{-0.5ex}
\item[(i)] the sequence $\set{\zb^k}$ generated by a generalized Newton scheme starting from $\zb^0\in\Omega_t(\beta)$ belongs to $\Omega_{t}(\beta)$, and
\item[(ii)] the corresponding sequence of the generalized Newton decrements $\set{\lambda_t(\zb^k)}$ converges quadratically to zero,
\vspace{-0.5ex}
\end{itemize}
then we call $\Omega_t(\beta)$ a quadratic convergence region of this  method, and denote it by $\Qc_t(\beta)$.

Next, we propose two inexact generalized Newton schemes: full-step and damped step, to generate sequence $\set{\zb^k}$ starting from $\zb^0\in\Qc_{t}(\beta)$ for some predefined $\beta\in (0, 1)$, and show that $\set{\lambda_t(\zb^k)}$ converges quadratically to zero.
In these schemes, the penalty parameter $t$ is fixed at a sufficiently small value \textit{a priori}, which may cause some difficulty for computing $\zb^{k+1}$ from $\zb^k$ due to the ill-condition of $\nabla^2{F}(\zb^k)$.
To avoid this situation, we then suggest to use a path-following scheme to gradually decrease $t$ starting from a larger value $t = t_0 > 0$.

%%% (b) Convergence of inexact full-step generalized Newton method.
\beforesubsubsec
\subsubsection{Inexact full-step generalized Newton method \eqref{eq:iFsGNM}: Local convergence}
\aftersubsubsec
We investigate the convergence of the \ref{eq:iFsGNM} and maximize the radius of its quadratic convergence region $\Qc_{t}(\beta)$.
The following theorem shows a quadratic convergence of the inexact generalized Newton scheme, whose proof is deferred to Appendix \ref{apdx:th:generalized_newton_convergence}.

%% Theorem 3.2
\begin{theorem}\label{th:generalized_newton_convergence}
Given a fixed parameter $t > 0$, let $\set{\zb^k}$ be a sequence generated by the following inexact full-step generalized Newton scheme \eqref{eq:iFsGNM}:
\begin{equation}\label{eq:iFsGNM}
\zb^{k+1} \approx \bar{\zb}^{k+1} := \Pc_{\zb^k}\left(\zb^k  - \nabla^2{F}(\zb^k)^{-1}\nabla{F}(\zb^k); t\right). \tag{$\mathrm{FGN}$}
\end{equation}
where the approximation $\approx$ is in the sense of Definition \ref{de:approx_sol}.
Then, we have three statements:
\begin{itemize}
\item[$\mathrm{(a)}$]
Let $0 < \beta < \frac{1}{2}(3-\sqrt{5})$ be a given radius, and $\Omega_{t}(\beta)$ be defined by \eqref{eq:quad_converg_region}.
If we choose $\zb^0\in \Omega_{t}(\beta)$ and the tolerance $\delta_k$ in Definition \ref{de:approx_sol} such that
\begin{equation*}
\Vert\zb^{k+1} - \bar{\zb}^{k+1}\Vert_{\zb^{k}} \leq \delta_k \leq \bar{\delta}_k(\beta) := \frac{\beta(1 - 3\beta + \beta^2)(1-\beta)^4}{2\beta^3 - 5\beta^2 + 3\beta + 1},
\end{equation*}
then $\set{\zb^k}$ generated by \ref{eq:iFsGNM} belongs to $\Omega_{t}(\beta)$.

\item[$\mathrm{(b)}$]
If we choose $\delta_k \leq \frac{\lambda_{t}(\zb^k)^2}{1-\lambda_{t}(\zb^k)}$, then, for $k\geq 0$ and $\lambda_t(\zb^0) < 1$, we have
\begin{equation}\label{eq:iFsGNM_est}
\lambda_{t}(\zb^{k\!+\!1})  \leq \left(\frac{2-4\lambda_t(\zb^k) + \lambda_t(\zb^k)}{(1-2\lambda_t(\zb^k))^3}\right)\lambda_t(\zb^k)^2 < 1.
\end{equation}
For any $\beta \in (0, 0.18858]$, if we choose $\zb^0\in\Qc_t(\beta)$, the quadratic convergence region of \ref{eq:iFsGNM}, then the sequence $\set{\zb^k} \subset\Qc_t(\beta)$, and $\{\lambda_t(\zb^k)\}$ \textbf{quadratically} converges to zero.

\item[$\mathrm{(c)}$]
Let $c := \frac{2 - 4\beta + \beta^2}{(1-2\beta)^3} \in (0, 1)$ and $\varepsilon > 0$ be a given tolerance for $\beta \in (0, 0.18858]$.
If we choose $t := (1-\epsilon)(\sqrt{\nu} + \epsilon + 2\epsilon^2/(1-\epsilon))^{-1}\varepsilon$ for a sufficiently small $\epsilon \in (0, \beta)$, and update  $\delta_k :=  \frac{2\beta_k^2}{1-\beta_k}$ with $\beta_k := c^{2^k-1}\beta^{2^k}$, then after at most $k := \mathcal{O}(\ln(\ln(1/\epsilon)))$ iterations,  $\zb^k$ is  an $\varepsilon$-solution  of \eqref{eq:mono_inclusion} in the sense of Definition~\ref{de:MVIP_approx_sol}.
\end{itemize}
\end{theorem}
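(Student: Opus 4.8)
All three parts hinge on the one-step estimate \eqref{eq:key_estimate1} of Theorem~\ref{th:main_estimate1} applied with the penalty parameter frozen, $t_{+}=t$. For part~(a) I would argue by induction on $k$. The base case is the hypothesis $\zb^0\in\Omega_t(\beta)$. Assume $\lambda_t(\zb^k)\le\beta$; applying Theorem~\ref{th:main_estimate1} with $\zb=\zb^k$, $\zb_{+}=\zb^{k+1}$ and $\delta(\zb^k)=\Vert\zb^{k+1}-\bar\zb^{k+1}\Vert_{\zb^k}\le\delta_k$ (which, by Lemma~\ref{le:sol_measure}, is controlled by the Definition~\ref{de:approx_sol} tolerance) gives $\zb^{k+1}\in\intx{\Zc}$ together with a bound on $\lambda_t(\zb^{k+1})$ by the right-hand side of \eqref{eq:key_estimate1} evaluated at $(\lambda_t(\zb^k),\delta_k)$. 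Because that right-hand side is monotonically increasing in both arguments, it suffices to verify the purely scalar inequality $\big(\tfrac{\beta+\bar\delta_k(\beta)}{1-\beta-\bar\delta_k(\beta)}\big)^2+\tfrac{\bar\delta_k(\beta)}{(1-\beta-\bar\delta_k(\beta))^3}\le\beta$, which is a one-variable algebraic check; the displayed closed form of $\bar\delta_k(\beta)$ is exactly what makes it hold. The restriction $0<\beta<\tfrac{1}{2}(3-\sqrt5)$ is the natural one: for the exact scheme ($\delta_k=0$) the requirement reduces to $\big(\tfrac{\beta}{1-\beta}\big)^2\le\beta$, i.e.\ $\beta^2-3\beta+1\ge0$, i.e.\ $\beta\le\tfrac{1}{2}(3-\sqrt5)$; strictly below this value the slack $\beta-\big(\tfrac{\beta}{1-\beta}\big)^2>0$ is what can absorb a positive inexactness $\bar\delta_k(\beta)$.

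\textbf{Part~(b).} First I would establish \eqref{eq:iFsGNM_est}. Writing $\lambda:=\lambda_t(\zb^k)$ and substituting the tolerance rule $\delta_k\le\lambda^2/(1-\lambda)$ into \eqref{eq:key_estimate1}, one gets $\lambda+\delta_k\le\lambda/(1-\lambda)$ and (for $\lambda<\tfrac{1}{2}$) $1-\lambda-\delta_k\ge(1-2\lambda)/(1-\lambda)$, so the two terms of \eqref{eq:key_estimate1} are bounded by $\lambda^2/(1-2\lambda)^2$ and $\lambda^2(1-\lambda)^2/(1-2\lambda)^3$; placing them over the common denominator $(1-2\lambda)^3$ and using $(1-2\lambda)+(1-\lambda)^2=2-4\lambda+\lambda^2$ yields \eqref{eq:iFsGNM_est} (the bound $<1$ being clear once $\lambda$ stays away from $\tfrac{1}{2}$). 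On $\lambda\le\beta$ the coefficient $\lambda\mapsto(2-4\lambda+\lambda^2)/(1-2\lambda)^3$ is increasing, so \eqref{eq:iFsGNM_est} gives $\lambda_t(\zb^{k+1})\le c\,\lambda_t(\zb^k)^2$ with $c:=(2-4\beta+\beta^2)/(1-2\beta)^3$. Now $c\beta\le1$ is equivalent to $(2-4\beta+\beta^2)\beta\le(1-2\beta)^3$, i.e.\ $9\beta^3-16\beta^2+8\beta-1\le0$, i.e.\ $(\beta-1)(9\beta^2-7\beta+1)\le0$, which for $\beta\in(0,1)$ holds precisely for $\beta\le\tfrac{7-\sqrt{13}}{18}=0.18858\ldots$. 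For such $\beta$ and $\zb^0\in\Qc_t(\beta)$ one then gets, inductively, $\lambda_t(\zb^{k+1})\le c\,\lambda_t(\zb^k)^2\le(c\beta)\,\lambda_t(\zb^k)\le\lambda_t(\zb^k)\le\beta$, so $\set{\zb^k}\subset\Qc_t(\beta)$; multiplying $\lambda_t(\zb^{k+1})\le c\,\lambda_t(\zb^k)^2$ by $c$ gives $c\,\lambda_t(\zb^{k+1})\le(c\,\lambda_t(\zb^k))^2$, hence $\lambda_t(\zb^k)\le c^{-1}(c\,\lambda_t(\zb^0))^{2^k}\le c^{-1}(c\beta)^{2^k}=\beta_k$, which is quadratic convergence to $0$.

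\textbf{Part~(c).} Fix $\varepsilon>0$ and a small $\epsilon\in(0,\beta)$. The choice of $t$ is calibrated through Lemma~\ref{le:sol_measure}: if $\lambda_t(\zb)\le\epsilon$ and the next inexact \eqref{eq:iFsGNM} step uses a tolerance $\delta$ of order $\epsilon^2/(1-\epsilon)$, the resulting point has weighted residual at most $(1-\lambda_t(\zb)-\delta)^{-1}(\sqrt\nu+\lambda_t(\zb)+2\delta)\,t$, and since this bound is increasing in $\lambda_t(\zb)$ and $\delta$, the stated $t=(1-\epsilon)(\sqrt\nu+\epsilon+2\epsilon^2/(1-\epsilon))^{-1}\varepsilon$ pushes it below $\varepsilon$; hence, as soon as an iterate with $\lambda_t(\zb^k)\le\epsilon$ is produced, one more step of \eqref{eq:iFsGNM} (with the schedule $\delta_k=2\beta_k^2/(1-\beta_k)$, compatible with the quadratic rate of part~(b)) is an $\varepsilon$-solution of \eqref{eq:mono_inclusion} in the sense of Definition~\ref{de:MVIP_approx_sol}. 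It remains to count the iterations needed to reach $\lambda_t(\zb^k)\le\epsilon$: by part~(b), $\lambda_t(\zb^k)\le\beta_k=c^{-1}(c\beta)^{2^k}$, so $\lambda_t(\zb^k)\le\epsilon$ once $2^k\ge\ln\!\big(1/(c\epsilon)\big)/\ln\!\big(1/(c\beta)\big)$, i.e.\ after $k=\mathcal{O}(\ln\ln(1/\epsilon))$ iterations (using $c\beta<1$, which holds since $\beta<0.18858$). Combining the two observations proves the bound.

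\textbf{Main obstacle.} The only genuinely non-mechanical step is in part~(a): verifying that the displayed closed form of $\bar\delta_k(\beta)$ is the (or a) largest admissible inexactness tolerance making the scalar inequality of the inductive step hold throughout $0<\beta<\tfrac{1}{2}(3-\sqrt5)$ — i.e.\ solving or estimating that one-variable inequality and exhibiting a convenient rational lower bound for its root. The rest is careful bookkeeping: the monotonicity of the right-hand side of \eqref{eq:key_estimate1}, the substitutions $\lambda_t(\zb^k)\le\beta$ in the contraction coefficients, the scalar estimates $x\mapsto x/(1-x)$ and $x\mapsto x^2/(1-x)$, and the monotonicity of the Lemma~\ref{le:sol_measure} residual bound in $(\lambda,\delta)$.
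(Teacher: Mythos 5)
Your proposal is correct and follows essentially the same route as the paper's proof: induction via the one-step estimate \eqref{eq:key_estimate1} of Theorem~\ref{th:main_estimate1} (with $t_+=t$), substitution of the tolerance rule $\delta_k\le\lambda^2/(1-\lambda)$ into \eqref{eq:key_estimate1} using the bounds $\lambda+\delta_k\le\lambda/(1-\lambda)$ and $1-\lambda-\delta_k\ge(1-2\lambda)/(1-\lambda)$ to obtain \eqref{eq:iFsGNM_est}, factoring $9\beta^3-16\beta^2+8\beta-1=(\beta-1)(9\beta^2-7\beta+1)$ to locate the threshold $\beta\le(7-\sqrt{13})/18$, and invoking Lemma~\ref{le:sol_measure} to calibrate $t$ for part~(c). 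The one algebraic sub-step you flag and defer in part~(a) is closed in the paper by first rewriting the scalar inequality via the identity $\tfrac{\beta+\delta}{1-\beta-\delta}=\tfrac{\beta}{1-\beta}+\tfrac{\delta}{(1-\beta)(1-\beta-\delta)}$ and then inserting the intermediate overestimate $\delta\le\beta(1-\beta)$, which makes the constraint linear in $\delta$ and yields exactly the displayed closed form $\bar\delta_k(\beta)$ (so $\bar\delta_k(\beta)$ is not the sharp threshold, just a convenient rational lower bound, as you suspected).
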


By a numerical experiment, we can show that $\bar{\delta}_k$ defined in Theorem \ref{th:generalized_newton_convergence}(a) is maximized at $\beta_{*} = 0.0997 \in (0, 0.18858]$ with $\bar{\delta}_k^{\ast} = 0.0372$.
Therefore, if we choose these values, we can maximize the tolerance $\delta_k$.
We note that $\bar{\delta}_k$ is decreasing when $\beta$ is increasing in $\left(0.0997, \frac{1}{2}(3-\sqrt{5})\right)$ and vice versa.
Hence, we can trade-off between the radius $\beta$ of $\Omega_{t}(\beta)$ and the tolerance $\delta_k$ of the subproblem in \eqref{eq:iFsGNM}.

%%% (c) Convergence of inexact damped-step generalized Newton method
\beforesubsubsec
\subsubsection{Inexact damped-step GN method \eqref{eq:iDsGNM}: Local convergence}
\aftersubsubsec
We now consider a damped-step generalized Newton scheme.
The following theorem summarizes the result whose proof is moved to Appendix \ref{apdx:th:generalized_ds_newton_convergence}.

%% Theorem 3.2
\vspace{-1ex}
\begin{theorem}\label{th:generalized_ds_newton_convergence}
Given a fixed parameter $t > 0$, let $\set{\zb^k}$ be the sequence generated by the following inexact damped-step generalized Newton scheme \eqref{eq:iDsGNM}:
\begin{equation}\label{eq:iDsGNM}
\vspace{-0.75ex}
\left\{\begin{array}{ll}
\tilde{\zb}^{k+1} &\approx \bar{\zb}^{k+1} := \Pc_{\zb^k}\left(\zb^k  - \nabla^2{F}(\zb^k)^{-1}\nabla{F}(\zb^k); t\right),\vspace{0.75ex}\\
\alpha_k &:= \frac{1}{(1 +\tilde{\lambda}_t(\zb^k))}~~\textrm{with}~~\tilde{\lambda}_t(\zb^k) := \Vert \tilde{\zb}^{k+1} - \zb^k\Vert_{\zb^k}, \vspace{0.75ex}\\
\zb^{k+1} &:= (1-\alpha_k)\zb^k + \alpha_k\tilde{\zb}^{k+1}.
\end{array}\right.\tag{$\mathrm{DGN}$}
\vspace{-0.5ex}
\end{equation}
Then, we have three statements:
\begin{itemize}
\vspace{-1ex}
\item[$\mathrm{(a)}$]
If we choose $\delta_k$ such that $\delta_k \leq \frac{\tilde{\lambda}_{t}(\zb^k)^2}{1 + \tilde{\lambda}_{t}(\zb^k)}$, then
\vspace{-0.75ex}
\begin{equation}\label{eq:key_estimate_2b}
\tilde{\lambda}_{t}(\zb^{k+1}) \leq \left(\frac{2\tilde{\lambda}_t(\zb^k)^2 + 4\tilde{\lambda}_t(\zb^k) + 3}{1 - \tilde{\lambda}_t(\zb^k)^2\left(2\tilde{\lambda}_t(\zb^k)^2 + 4\tilde{\lambda}_t(\zb^k) + 3\right)}\right)\tilde{\lambda}_{t}(\zb^k)^2.
\vspace{-0.75ex}
\end{equation}
For any $\beta \in (0, 0.21027]$, the sequence $\set{\zb^k}$ generated by \ref{eq:iDsGNM} starting from any $\zb^0\in\Omega_{t}(\beta)$ belongs to $\Omega_t(\beta)$, i.e., $\set{\zb^k}\subset\Omega_t(\beta)$.

\item[$\mathrm{(b)}$]
If we choose $\beta \in (0, 0.21027]$, then the sequence $\big\{\tilde{\lambda}(\zb^k)\big\}$ generated by \ref{eq:iDsGNM} starting from any $\zb^0\in\Qc_{t}(\beta)$ also converges \textbf{quadratically} to zero.

\item[$\mathrm{(c)}$]
Let $\bar{c} := \frac{2\beta^2 + 4\beta + 3}{1 - \beta^2\left(2\beta^2 + 4\beta + 3\right)} \in (0, 1)$ and $\varepsilon > 0$ be a given tolerance for  $\beta \in (0, 0.21027]$.
If we choose $t := (1 - 2\epsilon^2)\left(\sqrt{\nu}(1+\epsilon) + \epsilon + 3\epsilon^2\right)^{-1}\varepsilon$ for a sufficiently small $\epsilon \in (0, \beta)$, and update  $\delta_k := (1+ \beta_k)^{-1}\beta_k^2$ with $\beta_k := \bar{c}^{2^k-1}\beta^{2^k}$, then after at most $k := \mathcal{O}\left(\ln\left(\ln(1/\epsilon)\right)\right)$ iterations, $\zb^k$ is an $\varepsilon$-solution of \eqref{eq:mono_inclusion} in the sense of Definition~\ref{de:MVIP_approx_sol}.
\end{itemize}
\end{theorem}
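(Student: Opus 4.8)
The plan is to establish the three parts in sequence, each building on the previous, with Theorem~\ref{th:main_estimate1} (and its monotonicity property) as the workhorse. First, for part (a), I would apply Theorem~\ref{th:main_estimate1} with the substitution $\zb \leftarrow \zb^k$ and $\zb_{+} \leftarrow \tilde{\zb}^{k+1}$, but accounting for the damping step $\zb^{k+1} = (1-\alpha_k)\zb^k + \alpha_k\tilde{\zb}^{k+1}$. The key observation is that $\Vert \zb^{k+1} - \zb^k \Vert_{\zb^k} = \alpha_k \tilde{\lambda}_t(\zb^k) = \frac{\tilde{\lambda}_t(\zb^k)}{1 + \tilde{\lambda}_t(\zb^k)} < 1$, so $\zb^{k+1}$ stays in the Dikin ellipsoid around $\zb^k$, hence in $\intx{\Zc}$; this is the standard self-concordant barrier argument. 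Then I would bound $\tilde{\lambda}_t(\zb^{k+1}) = \Vert \bar{\zb}^{k+2} - \zb^{k+1} \Vert_{\zb^{k+1}}$ by combining the local-norm change-of-metric inequalities for self-concordant $F$ (comparing $\Vert \cdot \Vert_{\zb^{k+1}}$ to $\Vert \cdot \Vert_{\zb^k}$ with distortion factor $(1 - \alpha_k\tilde{\lambda}_t(\zb^k))^{\pm 1}$) with the key estimate \eqref{eq:key_estimate1} from Theorem~\ref{th:main_estimate1}, using $\delta_k \leq \frac{\tilde{\lambda}_t(\zb^k)^2}{1 + \tilde{\lambda}_t(\zb^k)}$ to control the inexactness term. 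After routine algebraic simplification this should yield \eqref{eq:key_estimate_2b}. The invariance $\set{\zb^k}\subset\Omega_t(\beta)$ for $\beta\in(0,0.21027]$ then follows by checking that the right-hand side of \eqref{eq:key_estimate_2b}, viewed as a function $\tilde{\lambda}\mapsto \bar{c}(\tilde{\lambda})\tilde{\lambda}^2$, maps $[0,\beta]$ into $[0,\beta]$; this is a one-variable monotonicity/fixed-point check, and $0.21027$ is the numerical threshold where $\bar{c}(\beta)\beta^2 = \beta$, i.e., $\bar{c}(\beta)\beta = 1$.

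For part (b), given $\zb^0 \in \Qc_t(\beta)$ with $\beta \le 0.21027$, I would iterate \eqref{eq:key_estimate_2b}: since $\bar{c}(\tilde{\lambda}_t(\zb^k)) \le \bar{c}(\beta) = \bar{c} < \frac{1}{\beta}$ (hence $\bar c \beta < 1$), we get $\tilde{\lambda}_t(\zb^{k+1}) \le \bar{c}\,\tilde{\lambda}_t(\zb^k)^2$, so $\bar{c}\,\tilde{\lambda}_t(\zb^{k+1}) \le (\bar{c}\,\tilde{\lambda}_t(\zb^k))^2$, which telescopes to $\bar{c}\,\tilde{\lambda}_t(\zb^k) \le (\bar{c}\beta)^{2^k}$, giving the claimed quadratic rate $\tilde{\lambda}_t(\zb^k) \le \bar{c}^{-1}(\bar{c}\beta)^{2^k} = \bar{c}^{2^k-1}\beta^{2^k} = \beta_k$. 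I would also note that since $\alpha_k \to 1$, the scheme asymptotically becomes a full Newton step, consistent with the quadratic regime. The fact that $\Qc_t(\beta) = \Omega_t(\beta)$ here (i.e., the whole neighborhood is a quadratic-convergence region for these $\beta$) is a consequence of parts (a) and the telescoping just described.

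For part (c), I would combine the quadratic bound $\tilde{\lambda}_t(\zb^k)\le\beta_k = \bar{c}^{2^k-1}\beta^{2^k}$ with Lemma~\ref{le:sol_measure}: choosing the stated $t := (1-2\epsilon^2)(\sqrt{\nu}(1+\epsilon)+\epsilon+3\epsilon^2)^{-1}\varepsilon$ and $\delta_k := (1+\beta_k)^{-1}\beta_k^2$ ensures the hypotheses $\lambda_t(\zb) + \delta < 1$ of Lemma~\ref{le:sol_measure} hold and that the bound \eqref{eq:sol_measure} evaluates to at most $\varepsilon$ once $\beta_k \le \epsilon$; the specific form of $t$ is reverse-engineered so that $(1-\lambda_t - \delta)(\sqrt\nu + \lambda_t + 2\delta)^{-1}\varepsilon \ge t$ at the stopping point. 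The iteration count to reach $\beta_k = \bar{c}^{2^k-1}\beta^{2^k} \le \epsilon$ is $k = \mathcal{O}(\ln\ln(1/\epsilon))$ because of the double-exponential decay, giving the claim. I expect the main obstacle to be part (a): carefully tracking the three compounding sources of error — the damping factor $\alpha_k$ in the change of local norm, the Newton-contraction term $\left(\frac{\tilde\lambda+\delta}{1-\tilde\lambda-\delta}\right)^2$, and the additive inexactness term $\frac{\delta}{(1-\tilde\lambda-\delta)^3}$ from \eqref{eq:key_estimate1} — and showing that the resulting messy expression actually simplifies to the clean rational bound \eqref{eq:key_estimate_2b}, together with verifying that $0.21027$ is indeed the sharp threshold for the self-map property of $\tilde{\lambda}\mapsto\bar{c}(\tilde{\lambda})\tilde{\lambda}^2$ on $[0,\beta]$.
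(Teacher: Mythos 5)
Your plan matches the paper's proof step for step: apply the key estimate \eqref{eq:key_estimate1} at the damped iterate via $\Vert\zb^{k+1}-\zb^k\Vert_{\zb^k}=\alpha_k\tilde\lambda_t(\zb^k)$ and $\Vert\zb^{k+1}-\bar{\zb}^{k+1}\Vert_{\zb^k}\le(1-\alpha_k)\tilde\lambda_t(\zb^k)+\delta(\zb^k)$, substitute $\alpha_k=(1+\tilde\lambda_t(\zb^k))^{-1}$, and note that $0.21027$ is the positive root of $\beta(1+\beta)(2\beta^2+4\beta+3)=1$ (equivalently $\bar{c}(\beta)\beta=1$, as you say); the telescoping in (b) and the Lemma~\ref{le:sol_measure}-based count in (c) are likewise the paper's. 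The one slip is writing $\tilde\lambda_t(\zb^{k+1})=\Vert\bar{\zb}^{k+2}-\zb^{k+1}\Vert_{\zb^{k+1}}$: the left side involves the \emph{inexact} iterate $\tilde{\zb}^{k+2}$, so you must add $\delta(\zb^{k+1})$ by the triangle inequality and close that extra term by the inductive choice $\delta_{k+1}\le\tilde\lambda_t(\zb^{k+1})^2/(1+\tilde\lambda_t(\zb^{k+1}))$, which is precisely how the paper arrives at the clean bound \eqref{eq:key_estimate_2b}.
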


We note that the quadratic convergence stated in Theorem \ref{th:generalized_ds_newton_convergence} is given through $\big\{\tilde{\lambda}_t(\zb^k)\big\}$, which is computable as opposed to $\{\lambda_t(\zb^k)\}$ in Theorem \ref{th:generalized_newton_convergence}.
Due to the fact that $\lambda_t(\zb^k) \leq \tilde{\lambda}_t(\zb^k) + \delta(\zb^k) \leq \tilde{\lambda}_t(\zb^k) + \frac{\tilde{\lambda}_{t}(\zb^k)^2}{1 + \tilde{\lambda}_{t}(\zb^k)} \to 0^{+}$ as $k\to\infty$, we conclude that $\{\lambda_t(\zb^k)\}$ also converges to zero at a quadratic rate in the \ref{eq:iDsGNM} scheme.

%%% (d) Inexact path-following generalized Newton method
\beforesubsubsec
\subsubsection{Inexact path-following GN method \eqref{eq:inexact_pf_scheme}: The worst-case iteration-complexity}
\aftersubsubsec
We consider the following inexact path-following generalized Newton scheme \eqref{eq:inexact_pf_scheme} for solving \eqref{eq:mono_inclusion} directly by simultaneously updating both $\zb$ and $t$ at each iteration:
\vspace{-0.5ex}
\begin{equation}\label{eq:inexact_pf_scheme}
\left\{\begin{array}{ll}
t_{k+1} &:= (1-\sigma_{\beta})t_k\vspace{1ex}\\
\zb^{k+1} &\approx \bar{\zb}^{k+1} := \Pc_{\zb^k}\left(\zb^k - \nabla^2{F}(\zb^k)^{-1}\nabla{F}(\zb^k); t_{k+1}\right),
\end{array}\right.\tag{$\mathrm{PFGN}$}
\vspace{-0.5ex}
\end{equation}
where $\sigma_{\beta}\in (0, 1)$ is a given factor.
As before, the approximation $\zb^{k+1}\approx\bar{\zb}^{k+1}$ is in the sense of Definition \ref{de:approx_sol} with a tolerance $\delta_k\geq 0$.

We emphasize that our \ref{eq:inexact_pf_scheme} scheme updates $t$ by decreasing it at each iteration, while the standard path-following scheme in \cite[4.2.23]{Nesterov2004} increases the penalty parameter at each iteration.
When $\Ac(\zb) = \cb$ is constant, we can define $s := \frac{1}{t}$ to obtain the scheme  \cite[4.2.23]{Nesterov2004}, and it allows us to start from $s = 0$.
This is not the case in our scheme when $\Ac(\zb) \neq \cb$.

Given $\beta \in (0, \frac{1}{2}(3-\sqrt{5}))$, we first find $\sigma_{\beta} \in (0, 1)$ such that if $\zb^k\in\Qc_{t_k}(\beta)$, then the new point $\zb^{k+1}$ at a new parameter $t_{k+1}$ still satisfies $\zb^{k+1}\in\Qc_{t_{k+1}}(\beta)$.
The following lemma proves  this key property, whose proof is deferred to Appendix \ref{apdx:le:update_penalty_param}.

%+ Lemma 3.4.
\begin{lemma}\label{le:update_penalty_param}
Let $\set{(\zb^k, t_k)}$ be the sequence generated by the inexact path-following generalized Newton scheme  \eqref{eq:inexact_pf_scheme}.
Then, for $\zb^k$ with $ \lambda_{t_k}(\zb^k) < 1$, we have
\begin{align}\label{eq:key_estimate2}
\lambda_{t_{k+1}}(\zb^k) &\leq  \lambda_{t_k}(\zb^k) + \left(\frac{\sigma_{\beta}}{1 - \sigma_{\beta}}\right)\Big[ \Vert \nabla{F}(\zb^k)\Vert_{\zb^k}^{\ast} +   \lambda_{t_k}(\zb^k) \Big] \nonumber\\
& \leq \lambda_{t_k}(\zb^k) + \left(\frac{\sigma_{\beta}}{1 - \sigma_{\beta}}\right)\Big[ \sqrt{\nu} +   \lambda_{t_k}(\zb^k) \Big] .
\end{align}
Let us fix $c\in (0, 1]$. Then, for any $0 < \beta <0.5(1+2c^2 - \sqrt{1+4c^2})$, if the factor $\sigma_{\beta}$ and the tolerance $\delta_k$ are respectively chosen such that
\begin{equation}\label{eq:choice_of_sigma}
\begin{array}{lllll}
&0 < &\sigma_{\beta} \leq &\bar{\sigma}_{\beta} &:= \frac{c\sqrt{\beta} - \beta(1 + c\sqrt{\beta})}{(1+c\sqrt{\beta})\sqrt{\nu} +c \sqrt{\beta}},
~~\text{and}~~\vspace{1.2ex}\\
&0 \leq &\delta_k \leq &\bar{\delta}_t(\beta) &:= \frac{(1-c^2)\beta}{(1+c\sqrt{\beta})^3\left[3c\sqrt{\beta} + c^2\beta + (1+c\sqrt{\beta})^3\right]},
\end{array}
\end{equation}
then $\lambda_{t_k}(\zb^k) \leq \beta$ implies $\lambda_{t_{k+1}}(\zb^{k+1}) \leq \beta$.
In addition, $\lambda_{t_{k+1}}(\zb^k) \leq \frac{c\sqrt{\beta}}{1+c\sqrt{\beta}}$.
\end{lemma}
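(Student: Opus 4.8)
The plan is to first establish the two inequalities in \eqref{eq:key_estimate2} by a monotonicity argument, and then deduce the neighborhood-invariance claim by feeding \eqref{eq:key_estimate2} into the quadratic-type estimate of Theorem~\ref{th:main_estimate1}. Throughout, abbreviate $H := \nabla^2{F}(\zb^k)\succ 0$, $g := \nabla{F}(\zb^k)$ and $\sigma := \sigma_{\beta}$, and let $\sb := \sb_{\zb^k}(\zb^k; t_k)$ and $\sb' := \sb_{\zb^k}(\zb^k; t_{k+1}) = \bar{\zb}^{k+1}$ denote the unique solutions of the linearized inclusion \eqref{eq:s_mapping_inc} at $\zb^k$ with parameters $t_k$ and $t_{k+1} = (1-\sigma)t_k$ (these exist and are unique by Lemma~\ref{le:gradient_mapping}). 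By the definition \eqref{eq:nt_decrement} we have $\lambda_{t_k}(\zb^k) = \Vert \zb^k - \sb\Vert_{\zb^k}$ and $\lambda_{t_{k+1}}(\zb^k) = \Vert \zb^k - \sb'\Vert_{\zb^k}$.

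\textbf{Step 1 (proving \eqref{eq:key_estimate2}).} From \eqref{eq:s_mapping_inc} there exist $\ab\in\Ac(\sb)$ and $\ab'\in\Ac(\sb')$ with $\ab = -t_k[g + H(\sb-\zb^k)]$ and $\ab' = -(1-\sigma)t_k[g + H(\sb'-\zb^k)]$, hence $\tfrac{1}{t_k}(\ab-\ab') = H(\sb'-\sb) - \sigma\big(g + H(\sb'-\zb^k)\big)$. Substituting this into the monotonicity inequality $\iprods{\ab-\ab',\,\sb-\sb'}\ge 0$ of Definition~\ref{de:monotone_opers} and rearranging gives
\[ \Vert \sb'-\sb\Vert_{\zb^k}^2 = \iprods{H(\sb'-\sb),\,\sb'-\sb} \le \sigma\,\iprods{g + H(\sb'-\zb^k),\,\sb'-\sb}. \]
Applying the Cauchy--Schwarz inequality in $\Vert\cdot\Vert_{\zb^k}$ together with the identities $\Vert H\vb\Vert_{\zb^k}^{\ast}=\Vert\vb\Vert_{\zb^k}$, $\Vert H^{-1}g\Vert_{\zb^k}=\Vert\nabla F(\zb^k)\Vert_{\zb^k}^{\ast}$, $\Vert\sb'-\zb^k\Vert_{\zb^k}=\lambda_{t_{k+1}}(\zb^k)$, and the triangle inequality, we obtain $\iprods{g+H(\sb'-\zb^k),\,\sb'-\sb}\le\big(\Vert\nabla F(\zb^k)\Vert_{\zb^k}^{\ast}+\lambda_{t_{k+1}}(\zb^k)\big)\Vert\sb'-\sb\Vert_{\zb^k}$, whence $\Vert\sb'-\sb\Vert_{\zb^k}\le\sigma\big(\Vert\nabla F(\zb^k)\Vert_{\zb^k}^{\ast}+\lambda_{t_{k+1}}(\zb^k)\big)$. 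Combining this with $\lambda_{t_{k+1}}(\zb^k)\le\lambda_{t_k}(\zb^k)+\Vert\sb'-\sb\Vert_{\zb^k}$ and solving for $\lambda_{t_{k+1}}(\zb^k)$ (using $\sigma<1$) yields the first line of \eqref{eq:key_estimate2}; the second line follows from $\Vert\nabla F(\zb^k)\Vert_{\zb^k}^{\ast}\le\sqrt{\nu}$, which is the maximized form of the barrier inequality in Definition~\ref{de:self_con_barrier}.

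\textbf{Step 2 (invariance and the bound on $\lambda_{t_{k+1}}(\zb^k)$).} Assume $\lambda_{t_k}(\zb^k)\le\beta$. A one-line computation shows that the difference between the denominator and the numerator of $\bar\sigma_{\beta}$ in \eqref{eq:choice_of_sigma} equals $(1+c\sqrt{\beta})(\sqrt{\nu}+\beta)$, so $\tfrac{\bar\sigma_{\beta}}{1-\bar\sigma_{\beta}}=\tfrac{c\sqrt\beta-\beta(1+c\sqrt\beta)}{(1+c\sqrt\beta)(\sqrt\nu+\beta)}$; the restriction on $\beta$ makes this numerator positive, i.e. $\bar\sigma_{\beta}>0$ and $\beta<\tfrac{c\sqrt\beta}{1+c\sqrt\beta}=:\gamma<1$. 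Since $\sigma\mapsto\sigma/(1-\sigma)$ is increasing, \eqref{eq:key_estimate2} with $\sigma_{\beta}\le\bar\sigma_{\beta}$ and $\lambda_{t_k}(\zb^k)\le\beta$ gives $\lambda_{t_{k+1}}(\zb^k)\le\beta+\tfrac{\sigma_{\beta}}{1-\sigma_{\beta}}(\sqrt\nu+\beta)\le\beta+(\gamma-\beta)=\gamma$, which is the last assertion of the lemma. Now invoke Theorem~\ref{th:main_estimate1} with $\zb=\zb^k$, $t_{+}=t_{k+1}$, $\zb_{+}=\zb^{k+1}$: Lemma~\ref{le:sol_measure} gives $\delta(\zb^k):=\Vert\zb^{k+1}-\bar\zb^{k+1}\Vert_{\zb^k}\le\delta_k$, and $\gamma+\delta_k<1$ since $\bar\delta_t(\beta)$ is manifestly small; hence $\zb^{k+1}\in\intx{\Zc}$ and, using $\tfrac{\gamma}{1-\gamma}=c\sqrt\beta$ and the identity $\tfrac{\gamma+\delta}{1-\gamma-\delta}=c\sqrt\beta+\tfrac{\delta}{(1-\gamma)(1-\gamma-\delta)}$,
\[ \lambda_{t_{k+1}}(\zb^{k+1}) \le \Big(\frac{\gamma+\delta_k}{1-\gamma-\delta_k}\Big)^2 + \frac{\delta_k}{(1-\gamma-\delta_k)^3} = c^2\beta + (\text{correction}), \]
where the correction is a sum of terms each proportional to $\delta_k$. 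Since $c\le 1$, the leading term $c^2\beta$ leaves a slack $(1-c^2)\beta$, and $\bar\delta_t(\beta)$ in \eqref{eq:choice_of_sigma} is precisely the threshold below which the correction stays $\le(1-c^2)\beta$; therefore $\lambda_{t_{k+1}}(\zb^{k+1})\le\beta$, i.e. $\zb^{k+1}\in\Omega_{t_{k+1}}(\beta)$.

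\textbf{The main obstacle} is the final bookkeeping at the end of Step 2: after substituting $\gamma=\tfrac{c\sqrt\beta}{1+c\sqrt\beta}$ and expanding the estimate of Theorem~\ref{th:main_estimate1}, one must bound the three $\delta_k$-dependent terms (with denominators $(1-\gamma-\delta_k)$ to the first, second and third powers, together with an extra $(1-\gamma)$ factor) by one explicit multiple of $\delta_k$ and verify that this multiple times $\bar\delta_t(\beta)$ equals the slack $(1-c^2)\beta$, which fixes the exact value of $\bar\delta_t(\beta)$. This step is elementary but requires careful tracking of the $(1+c\sqrt\beta)$ factors; by contrast, the monotonicity argument of Step 1 and the algebraic identity pinning down $\bar\sigma_{\beta}$ are routine.
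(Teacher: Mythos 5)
Your Step 1 is correct and is essentially the paper's argument modulo a minor algebraic reshuffle: the paper decomposes the monotonicity inequality so that $\bar{\ub}^k-\zb^k$ (hence $\lambda_{t_k}(\zb^k)$) appears in the inner product and divides by $t_{k+1}$, then adds $\lambda_{t_k}(\zb^k)$ by the triangle inequality; you instead keep $\sb'-\zb^k$ (hence $\lambda_{t_{k+1}}(\zb^k)$), divide by $t_k$, and solve the resulting implicit inequality for $\lambda_{t_{k+1}}(\zb^k)$. Both routes yield exactly \eqref{eq:key_estimate2}. Your verification that $\bar\sigma_{\beta}$ in \eqref{eq:choice_of_sigma} forces $\lambda_{t_{k+1}}(\zb^k)\le\gamma:=\tfrac{c\sqrt{\beta}}{1+c\sqrt{\beta}}$ is also right and corresponds to the paper's $\theta_k\le\gamma$ step.

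The gap is exactly where you flag it, and it is not quite ``elementary bookkeeping'': your phrase ``the correction is a sum of terms each proportional to $\delta_k$'' is not literally true, because after expanding $\bigl(\tfrac{\gamma+\delta_k}{1-\gamma-\delta_k}\bigr)^2 + \tfrac{\delta_k}{(1-\gamma-\delta_k)^3}$ every correction term still carries $\delta_k$ inside the denominator (e.g.\ $\tfrac{2c\sqrt{\beta}\,\delta_k}{(1-\gamma)(1-\gamma-\delta_k)}$ and $\tfrac{\delta_k}{(1-\gamma-\delta_k)^3}$), so no fixed multiple of $\delta_k$ dominates them directly. The paper's proof supplies the missing device: it first restricts $\delta_k\le\theta_k(1-\theta_k)$, which gives $1-\theta_k-\delta_k\ge(1-\theta_k)^2$ and clears $\delta_k$ from the denominators, producing a $\delta_k$-free coefficient $M_k=\tfrac{2\theta_k(1-\theta_k)^2+\theta_k(1-\theta_k)+1}{(1-\theta_k)^6}$; it then imposes $M_k\delta_k\le\tfrac{1-c^2}{c^2}\bigl(\tfrac{\theta_k}{1-\theta_k}\bigr)^2$ and evaluates $M_k$ at the extreme $\theta_k=\gamma$, which is precisely how the explicit $\bar\delta_t(\beta)$ in \eqref{eq:choice_of_sigma} arises. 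That same intermediate restriction also proves $\gamma+\delta_k<1$, which you dismissed as ``manifestly small'' rather than proved. So your outline is on track, and you correctly identify the main obstacle, but the derivation of $\bar\delta_t(\beta)$ is genuinely unfinished, and the naive expansion you sketch would not close it without first clearing the $\delta_k$-dependent denominators.
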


As an example, if we choose $c := 0.95$, then the possible interval for $\beta$ is $(0, 0.32895)$.
Now, if we choose $\beta := \frac{1}{9c^2} \in (0, 0.32895)$ (i.e., $\beta \approx 0.12311$), then $\bar{\sigma}_{\beta} = \frac{5}{36\sqrt{\nu} + 9}$, which is the same as in the standard path-following method in \cite{Nesterov2004}.
In this case, the tolerance $\delta_k$ for the subproblem at the second line of \ref{eq:inexact_pf_scheme} must be chosen such that $0 \leq \delta_k \leq 7.45933\times 10^{-4}$.
Figure \ref{fig:delta_sigma} plots the values of $\bar{\delta}_t(\beta)$ and $\bar{\sigma}_{\beta}$ in \eqref{eq:choice_of_sigma} as a function of $\beta$, respectively for given $c = 0.95$ and $\nu = 1000$.
\begin{figure}[ht!]
\vspace{-3ex}
\begin{center}
\includegraphics[width = 1\textwidth]{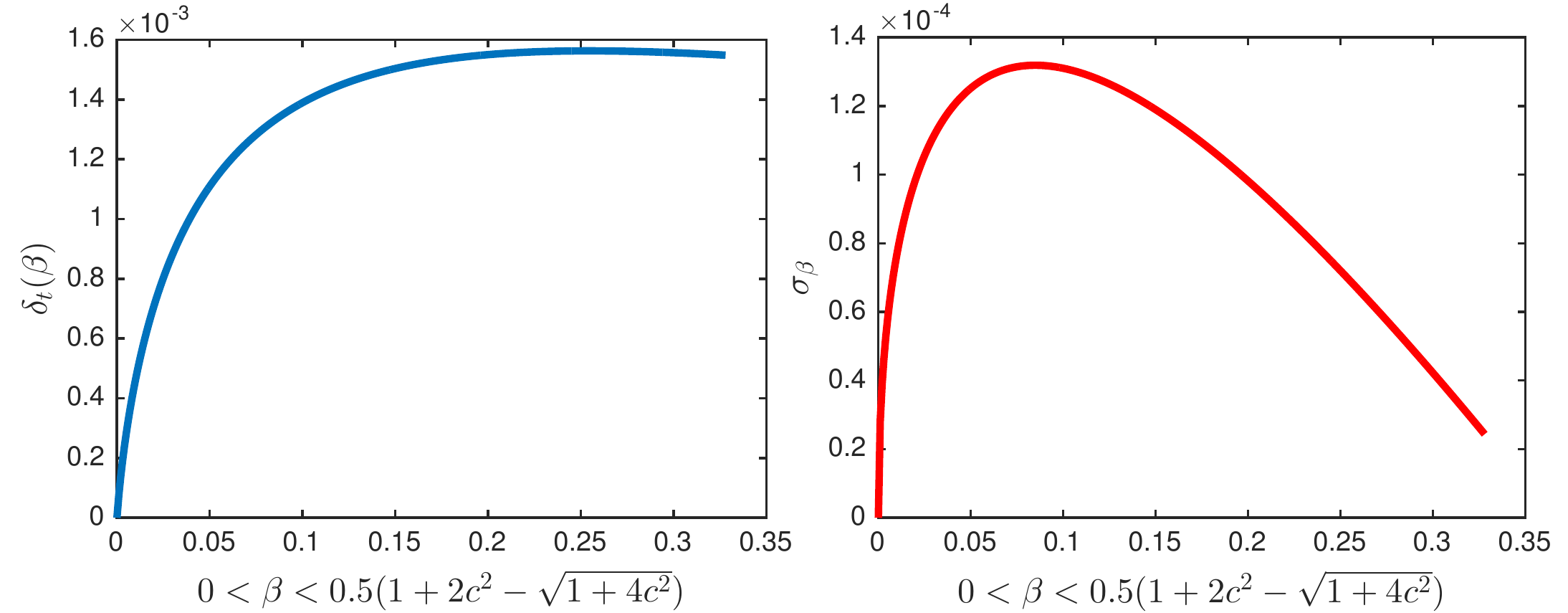}
%\vspace{-1ex}
\caption{The graph of the two functions $\bar{\delta}_t$ and $\bar{\sigma}_{\beta}$ with respect to $\beta$.}\label{fig:delta_sigma}
\end{center}
\vspace{-5ex}
\end{figure}
This figure shows that $\bar{\delta}_t$ is an increasing function of $\beta$, while $\bar{\sigma}_{\beta}$ has the maximum point at $\beta = 0.0870$.
Hence, a good choice of $\beta$ is  $\beta = 0.0870$.

The following theorem investigates the worst-case iteration-complexity of \ref{eq:inexact_pf_scheme} using the update rule \eqref{eq:choice_of_sigma} for $\sigma_{\beta}$.
The proof of this theorem can be found in Appendix \ref{apdx:th:ppf_convergence}.

%% Theorem 3.3.
\begin{theorem}\label{th:ppf_convergence}
Let $\set{(\zb^k, t_k)}$ be  generated by \ref{eq:inexact_pf_scheme} under the following configuration:
\begin{itemize}
\item[$\mathrm{(i)}$]  $c\in (0, 1]$ is given, and $\beta$ is chosen such that $0 < \beta <0.5(1+2c^2 - \sqrt{1+4c^2})$.
\item[$\mathrm{(ii)}$] The initial \Shu{points} $\zb^0$ and $t_0 > 0$ are chosen such that $\zb^0\in\intx{\Zc}$ and $\lambda_{t_0}(\zb^0) \leq \beta$.
\end{itemize}
Then, the following conclusions hold:
\begin{itemize}
\item[$\mathrm{(a)}$] $\lambda_{t_k}(\zb^k) \leq \beta$ for all $k\geq 0$.
\item[$\mathrm{(b)}$] The number of iterations $k$ to achieve an $\varepsilon$-solution $\zb^k$ of \eqref{eq:mono_inclusion} in the sense of Definition \ref{de:MVIP_approx_sol} does not exceed
\begin{equation*}
k_{\max} := \left\lfloor \left(\frac{(1+ c\sqrt{\beta})\sqrt{\nu} + c\sqrt{\beta}}{c\sqrt{\beta} - \beta(1 + c\sqrt{\beta})}\right) \ln\left(\frac{M_0 t_0}{\varepsilon}\right) \right\rfloor + 1,
\end{equation*}
where  $M_0 := \left(1 - \frac{c\sqrt{\beta}}{1 + c\sqrt{\beta}} - \bar{\delta}_t(\beta)\right)^{-1}\left(\sqrt{\nu} + \frac{c\sqrt{\beta}}{1 + c\sqrt{\beta}} + 2\bar{\delta}_t(\beta)\right) = \mathcal{O}(\sqrt{\nu})$.
\item[$\mathrm{(c)}$] Consequently, the worst-case iteration-complexity of  \ref{eq:inexact_pf_scheme} is $\mathcal{O}\left( \sqrt{\nu}\ln\Big(\frac{\sqrt{\nu} t_0}{\varepsilon}\Big)\right)$.
\end{itemize}
\end{theorem}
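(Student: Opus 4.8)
The plan is to bootstrap from the two technical lemmas already in hand: Lemma~\ref{le:update_penalty_param}, which controls how one \ref{eq:inexact_pf_scheme} step moves a point inside a neighborhood of the central path, and Lemma~\ref{le:sol_measure}, which converts proximity to the linearized solution into an $\varepsilon$-solution certificate for \eqref{eq:mono_inclusion}. With these, the only remaining work is index bookkeeping and one elementary estimate for $\ln(1-\sigma_{\beta})$.

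\emph{Part (a).} I would prove $\lambda_{t_k}(\zb^k)\leq\beta$ for all $k\geq0$ by induction. The base case is the initialization hypothesis $\mathrm{(ii)}$. For the step, assume $\lambda_{t_k}(\zb^k)\leq\beta$; since $0<\beta<\tfrac12(1+2c^2-\sqrt{1+4c^2})<1$ we have $\lambda_{t_k}(\zb^k)<1$, so Lemma~\ref{le:update_penalty_param} is applicable. Because the step uses $\sigma_{\beta}\leq\bar{\sigma}_{\beta}$ and a tolerance $\delta_k\leq\bar{\delta}_t(\beta)$ as in \eqref{eq:choice_of_sigma}, that lemma gives exactly $\lambda_{t_{k+1}}(\zb^{k+1})\leq\beta$, closing the induction; it also yields the finer bound $\lambda_{t_{k+1}}(\zb^k)\leq\frac{c\sqrt{\beta}}{1+c\sqrt{\beta}}$, which I will use next.

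\emph{Part (b).} The rule $t_{k+1}=(1-\sigma_{\beta})t_k$ gives geometric decay $t_k=(1-\sigma_{\beta})^k t_0$. Fix $k\geq1$. By construction $\zb^k$ is a $\delta_{k-1}$-approximation, in the sense of Definition~\ref{de:approx_sol}, of the linearized solution $\bar{\zb}^k=\Pc_{\zb^{k-1}}\big(\zb^{k-1}-\nabla^2{F}(\zb^{k-1})^{-1}\nabla{F}(\zb^{k-1});t_k\big)=\sb_{\zb^{k-1}}(\zb^{k-1};t_k)$ of \eqref{eq:zero_k_set}. Hence Lemma~\ref{le:sol_measure} applies with reference point $\zb=\zb^{k-1}$, parameter $t=t_k$ and tolerance $\delta=\delta_{k-1}$, so its $\lambda_t(\zb)$ becomes $\lambda_{t_k}(\zb^{k-1})$. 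By part~(a) (last display, with index shifted by one) $\lambda_{t_k}(\zb^{k-1})\leq\frac{c\sqrt{\beta}}{1+c\sqrt{\beta}}$, and $\delta_{k-1}\leq\bar{\delta}_t(\beta)$; since \eqref{eq:choice_of_sigma} keeps $\frac{c\sqrt{\beta}}{1+c\sqrt{\beta}}+\bar{\delta}_t(\beta)<1$ (equivalently $M_0$ finite and positive), and the bound in Lemma~\ref{le:sol_measure} is increasing in both $\lambda_t(\zb)$ and $\delta$, I obtain
\[
\mathrm{dist}_{\zb^k}\big(\boldsymbol{0},\Ac_{\Zc}(\zb^k)\big)\leq\Big(1-\tfrac{c\sqrt{\beta}}{1+c\sqrt{\beta}}-\bar{\delta}_t(\beta)\Big)^{-1}\Big(\sqrt{\nu}+\tfrac{c\sqrt{\beta}}{1+c\sqrt{\beta}}+2\bar{\delta}_t(\beta)\Big)t_k = M_0\,t_k .
\]
Therefore $\zb^k$ is an $\varepsilon$-solution as soon as $M_0(1-\sigma_{\beta})^k t_0\leq\varepsilon$, i.e. $k\geq\ln(M_0t_0/\varepsilon)/\big(-\ln(1-\sigma_{\beta})\big)$. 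Using the elementary bound $-\ln(1-\sigma_{\beta})\geq\sigma_{\beta}$ and taking the fastest admissible contraction $\sigma_{\beta}=\bar{\sigma}_{\beta}$ from \eqref{eq:choice_of_sigma}, the least such integer is at most $\big\lfloor\bar{\sigma}_{\beta}^{-1}\ln(M_0t_0/\varepsilon)\big\rfloor+1=k_{\max}$, since $1/\bar{\sigma}_{\beta}=\frac{(1+c\sqrt{\beta})\sqrt{\nu}+c\sqrt{\beta}}{c\sqrt{\beta}-\beta(1+c\sqrt{\beta})}$.

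\emph{Part (c).} For fixed $c$ and $\beta$ one has $1/\bar{\sigma}_{\beta}=\mathcal{O}(\sqrt{\nu})$ and $M_0=\mathcal{O}(\sqrt{\nu})$, hence $k_{\max}=\mathcal{O}\big(\sqrt{\nu}\,\ln(\sqrt{\nu}\,t_0/\varepsilon)\big)$. I expect the main obstacle to be purely organizational rather than analytical: the \ref{eq:inexact_pf_scheme} step linearizes at the \emph{old} iterate $\zb^{k-1}$ but already evaluates the resolvent at the \emph{new} parameter $t_k$, so one must work with the hybrid quantity $\lambda_{t_k}(\zb^{k-1})$ and carefully invoke the tighter bound $\lambda_{t_k}(\zb^{k-1})\leq\frac{c\sqrt{\beta}}{1+c\sqrt{\beta}}$ from Lemma~\ref{le:update_penalty_param} instead of the coarser $\lambda_{t_k}(\zb^k)\leq\beta$; one also has to check that \eqref{eq:choice_of_sigma} really keeps $1-\frac{c\sqrt{\beta}}{1+c\sqrt{\beta}}-\bar{\delta}_t(\beta)$ bounded away from $0$, so that $M_0$ is a genuine $\mathcal{O}(\sqrt{\nu})$ constant and the logarithmic factor in $k_{\max}$ is legitimate.
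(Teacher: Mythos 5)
Your proof is correct and follows the paper's own route: it invokes Lemma~\ref{le:update_penalty_param} to keep the iterates in $\Omega_{t_k}(\beta)$ and to obtain the refined bound $\lambda_{t_k}(\zb^{k-1})\leq\frac{c\sqrt{\beta}}{1+c\sqrt{\beta}}$, then feeds that bound and the tolerance $\bar{\delta}_t(\beta)$ into Lemma~\ref{le:sol_measure} to get $\mathrm{dist}_{\zb^k}(\boldsymbol{0},\Ac_{\Zc}(\zb^k))\leq M_0 t_k$, and finishes with the geometric decay of $t_k$ together with $-\ln(1-\sigma_{\beta})\geq\sigma_{\beta}$. If anything, you are more careful than the paper about the index offset (linearization at $\zb^{k-1}$, resolvent at $t_k$), which the paper glosses over, but the argument is the same.
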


Theorem \ref{th:ppf_convergence} requires a starting point $\zb^0 \in\Omega_{t_0}(\beta)$ at a given penalty parameter $t_0 > 0$. In order to find $\zb^0$, we perform an initial phase (called Phase 1) as described below.

%%% (d2). Finding initial point.
\beforesubsubsec
\subsubsection{Finding an initial point with the path-following iterations using auxiliary problem}
\aftersubsubsec
When $\Ac(\cdot) = \partial{H}(\cdot)$ the subgradient of a proper, closed and convex function $H$, we  can find $\zb^0 \in\Omega_{t_0}(\beta)$ for \ref{eq:inexact_pf_scheme} by applying the [inexact] damped-step generalized Newton scheme \eqref{eq:iFsGNM} to solve \eqref{eq:barrier_mono_inc} for fixed penalty parameter $t = t_0$.
This scheme has a sublinear convergence rate \cite{TranDinh2013e}.
However, it is still unclear how to generalize this method to \eqref{eq:mono_inclusion}.

We instead propose a new auxiliary problem for \eqref{eq:barrier_mono_inc}, and then apply \ref{eq:inexact_pf_scheme} to solve this auxiliary problem in order to obtain $\zb^0$.
Then, we estimate the maximum number of the path-following iterations needed in this phase.

Let us fix some $\hat{\zb}^0\in\intx{\Zc}$. We first compute a vector $\hat{\xi}^0\in\Ac(\hat{\zb}^0)$ and evaluate $\nabla{F}(\hat{\zb}^0)$.
Then, we define $\hat{\zeta}^0 := t_0\nabla{F}(\hat{\zb}^0) + \hat{\xi}^0$, and consider the following auxiliary problem of \eqref{eq:barrier_mono_inc}:
\begin{equation}\label{eq:auxi_prob1}
\textrm{Find~$\hat{\zb}^{\star}_{\tau}\in\intx{\Zc}$ such that:}~0 \in t_0\nabla{F}(\hat{\zb}^{\star}_{\tau}) - \tau\hat{\zeta}^0 + \Ac(\hat{\zb}^{\star}_{\tau}),
\end{equation}
where $\tau \in [0, 1]$ is a new homotopy parameter.
Clearly, \eqref{eq:auxi_prob1} has a similar form as \eqref{eq:barrier_mono_inc}.
\begin{itemize}
\item[$\mathrm{(a)}$] When $\tau = 1$, we have $0 \in t_0\nabla{F}(\hat{\zb}^0) - \hat{\zeta}^0 + \Ac(\hat{\zb}^0)$ due to the choice of $\hat{\zeta}^0$.
Hence, $\hat{\zb}^0$ is an exact solution of \eqref{eq:auxi_prob1} at $\tau = 1$.

\item[$\mathrm{(b)}$] When $\tau =0$, we have $0 \in t_0\nabla{F}(\hat{\zb}^{\star}_{\tau}) + \Ac(\hat{\zb}^{\star}_{\tau})$. Hence, any solution $\hat{\zb}^{\star}_{\tau}$ of \eqref{eq:auxi_prob1} is also \Tianxiao{a} solution of \eqref{eq:barrier_mono_inc} at $t = t_0$.
\end{itemize}
Now, we can apply \ref{eq:inexact_pf_scheme} to solve \eqref{eq:auxi_prob1} starting from $\tau_0 = 1$ but using a different update rule for $\tau$.
More precisely, this scheme can be written as follows:
\begin{equation}\label{eq:inexact_pf_scheme_phase1}
\left\{\begin{array}{ll}
\tau_{j+1} &:= \tau_j - \Delta_j,\vspace{1ex}\\
\hat{\zb}^{j+1} &\approx \bar{\hat{\zb}}^{j+1} := \Pc_{\hat{\zb}^j}\left(\hat{\zb}^j - \nabla^2{F}(\hat{\zb}^j)^{-1}\left(\nabla{F}(\hat{\zb}^j)- \tau_{j+1}t_0^{-1}\hat{\zeta}^0\right); t_0\right),
\end{array}\right.
\end{equation}
where $\hat{\sigma}_{\eta}\in (0, 1)$ is a given factor.
Here, the approximation $\hat{\zb}^{j+1}\approx\bar{\hat{\zb}}^{j+1}$ is in the sense of Definition \ref{de:approx_sol} with a given tolerance $\hat{\delta}_j\geq 0$.
We also use  the index $j$ to distinguish with the index $k$ in \ref{eq:inexact_pf_scheme}, and using the notation ``hat'' for the iterate vectors.

Similar to \eqref{eq:nt_decrement}, we define the following generalized Newton decrement for \eqref{eq:auxi_prob1}:
\begin{equation}\label{eq:nt_decrement0}
\hat{\lambda}_{\tau}(\hat{\zb}) := \big\Vert \hat{\zb} - \Pc_{\hat{\zb}}\big(\hat{\zb} - \nabla^2{F}(\hat{\zb})^{-1}\big(\nabla{F}(\hat{\zb})- \tau t_0^{-1}\hat{\zeta}^0\big); t_0\big)\big\Vert_{\hat{\zb}}.
\end{equation}
The theorem below provides the number of iterations $j$ needed to find an initial point $\zb^0 \in\Omega_{t_0}(\beta)$ for \ref{eq:inexact_pf_scheme} using \eqref{eq:inexact_pf_scheme_phase1}, whose proof can be found in Appendix \ref{apdx:th:complexity_of_phase_1}.

%%% Theorem 3.5.
\begin{theorem}\label{th:complexity_of_phase_1}
Let $c \in (0, 1]$ and $\beta$ be chosen as in Theorem \ref{th:ppf_convergence}, and  $\eta$ be chosen such that $0 < \eta < \beta$.
Let $\set{(\hat{\zb}^j, \tau_j)}$ be  generated by \eqref{eq:inexact_pf_scheme_phase1}.
If $\Delta_j$ and $\hat{\delta}_j$ are chosen such that
\begin{equation}\label{eq:choice_of_sigma0}
\begin{array}{lllll}
&0 \leq &\Delta_j \leq &\frac{\bar{\mu}_{\eta}}{\Vert\hat{\zeta}_0\Vert_{\hat{\zb}^j}^{\ast}}&~\text{with}~\bar{\mu}_{\eta} := \frac{t_0}{\Vert\hat{\zeta}_0\Vert_{\hat{\zb}^j}^{\ast}}\left(\frac{c\sqrt{\eta}}{1+c\sqrt{\eta}} - \eta\right),~~~\text{and}\vspace{1.25ex}\\
&0 \leq &\hat{\delta}_j \leq &\bar{\delta}_{\tau}(\eta) := & \frac{(1-c^2)\eta}{(1+c\sqrt{\eta})^3\left[3c\sqrt{\eta} + c^2\eta + (1+c\sqrt{\eta})^3\right]},
\end{array}
\end{equation}
then $\hat{\lambda}_{\tau_j}(\hat{\zb}^j)$ defined \Shu{in} \eqref{eq:nt_decrement0} satisfies $\hat{\lambda}_{\tau_j}(\hat{\zb}^j) \leq \eta$ for all $j\geq 0$.

Let $\zb^0 := \hat{\zb}^{j_{\max}}$ be obtained after $j_{\max}$ iterations.
Then, $\hat{\lambda}_{\tau_0}(\hat{\zb}^0) = 0$ and we have
\begin{equation}\label{eq:diff_nt_decrement}
\lambda_{t_0}(\zb^0) \leq \hat{\lambda}_{\tau_j}(\hat{\zb}^j) + t_0^{-1}\tau_j\Vert\hat{\zeta}^0\Vert_{\hat{\zb}^j}^{\ast} \leq \eta + \frac{\kappa\Vert\hat{\zeta}^0\Vert_{\bar{\zb}^{\star}_F}^{\ast}}{t_0} - j\left(\frac{c\sqrt{\eta}}{1+c\sqrt{\eta}} - \eta\right),~\forall j\geq j_{\max},
\end{equation}
where $\lambda_t(\zb)$ is defined by \eqref{eq:nt_decrement}, and $\bar{\zb}^{\star}_F$ and $\kappa$ are defined by \eqref{eq:analytical_center} and  below \eqref{eq:analytical_center}, respectively.

The number of iterations $j$ to achieve $\zb^0 := \hat{\zb}^j$ such that $\lambda_{t_0}(\zb^0) \leq \beta$ does not exceed
\begin{equation*}
j_{\max} := \left\lfloor \frac{ \kappa(1 + c\sqrt{\eta})\Vert\hat{\zeta}^0\Vert_{\bar{\zb}^{\star}_F}^{\ast}}{t_0\left(c\sqrt{\eta} - \eta(1 + c\sqrt{\eta})\right)}  -  \frac{(\beta - \eta)(1 + c\sqrt{\eta})}{c\sqrt{\eta} - \eta(1 + c\sqrt{\eta}} \right\rfloor  + 1.
\end{equation*}
The worst-case iteration-complexity of \eqref{eq:inexact_pf_scheme_phase1} to obtain $\zb^0$ such that $\lambda_{t_0}(\zb^0) \leq \beta$ is $\mathcal{O}\left( \frac{\kappa\Vert\hat{\zeta}^0\Vert_{\bar{\zb}^{\star}_F}^{\ast}}{t_0} \right)$.
\end{theorem}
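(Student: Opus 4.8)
The plan is to treat the auxiliary inclusion \eqref{eq:auxi_prob1} as a genuine instance of the barrier inclusion \eqref{eq:barrier_mono_inc} and to transplant, almost verbatim, the path-following analysis of Lemma~\ref{le:update_penalty_param} and Theorem~\ref{th:ppf_convergence}. For each fixed $\tau\in[0,1]$ the operator $\Ac-\tau\hat{\zeta}^0$ is maximally monotone (a constant shift of $\Ac$), and a one-line computation with the definition \eqref{eq:resolvent} of the scaled resolvent shows that $\Pc_{\hat{\zb}}\big(\hat{\zb}-\nabla^2{F}(\hat{\zb})^{-1}(\nabla{F}(\hat{\zb})-\tau t_0^{-1}\hat{\zeta}^0);t_0\big)$ is exactly the generalized Newton point of \eqref{eq:barrier_mono_inc} at $t=t_0$ for the operator $\Ac-\tau\hat{\zeta}^0$; hence $\hat{\lambda}_{\tau}$ in \eqref{eq:nt_decrement0} is the associated generalized Newton decrement, and Theorem~\ref{th:main_estimate1} applies verbatim to each inexact step of \eqref{eq:inexact_pf_scheme_phase1}. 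The base case is immediate: because $\hat{\zeta}^0=t_0\nabla{F}(\hat{\zb}^0)+\hat{\xi}^0$ with $\hat{\xi}^0\in\Ac(\hat{\zb}^0)$, the point $\hat{\zb}^0$ solves \eqref{eq:auxi_prob1} exactly at $\tau_0=1$, so Lemma~\ref{le:gradient_mapping} gives $\hat{\lambda}_{\tau_0}(\hat{\zb}^0)=0$.

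Next I would prove the one-step invariance $\hat{\lambda}_{\tau_j}(\hat{\zb}^j)\le\eta\ \Rightarrow\ \hat{\lambda}_{\tau_{j+1}}(\hat{\zb}^{j+1})\le\eta$, the exact analogue of Lemma~\ref{le:update_penalty_param}. The update $\tau_j\mapsto\tau_{j+1}=\tau_j-\Delta_j$ only shifts the argument of $\Pc_{\hat{\zb}^j}(\cdot;t_0)$ by $\Delta_j t_0^{-1}\nabla^2{F}(\hat{\zb}^j)^{-1}\hat{\zeta}^0$; since the scaled resolvent is (firmly) nonexpansive in $\Vert\cdot\Vert_{\hat{\zb}^j}$ and $\Vert\nabla^2{F}(\hat{\zb}^j)^{-1}\hat{\zeta}^0\Vert_{\hat{\zb}^j}=\Vert\hat{\zeta}^0\Vert_{\hat{\zb}^j}^{\ast}$, the triangle inequality gives $\hat{\lambda}_{\tau_{j+1}}(\hat{\zb}^j)\le\hat{\lambda}_{\tau_j}(\hat{\zb}^j)+\Delta_j t_0^{-1}\Vert\hat{\zeta}^0\Vert_{\hat{\zb}^j}^{\ast}$, and the cap on $\Delta_j$ in \eqref{eq:choice_of_sigma0} bounds the right-hand side by $\frac{c\sqrt{\eta}}{1+c\sqrt{\eta}}<1$. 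Feeding this into \eqref{eq:key_estimate1} with $a:=\frac{c\sqrt{\eta}}{1+c\sqrt{\eta}}$ (so $1-a=\frac{1}{1+c\sqrt{\eta}}$) and using the monotonicity of its right-hand side in both arguments, the noiseless case $\hat{\delta}_j=0$ already yields $\big(\frac{a}{1-a}\big)^2=c^2\eta\le\eta$ (as $c\le1$), leaving slack $(1-c^2)\eta$; solving the perturbed inequality $\big(\frac{a+\delta}{1-a-\delta}\big)^2+\frac{\delta}{(1-a-\delta)^3}\le\eta$ for the largest admissible $\delta$ reproduces exactly the threshold $\bar{\delta}_{\tau}(\eta)$ of \eqref{eq:choice_of_sigma0}, which is literally the computation already performed in the proof of Lemma~\ref{le:update_penalty_param} with $\beta$ replaced by $\eta$ and the penalty-change perturbation replaced by $\Delta_j t_0^{-1}\Vert\hat{\zeta}^0\Vert_{\hat{\zb}^j}^{\ast}$. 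An induction then gives $\hat{\lambda}_{\tau_j}(\hat{\zb}^j)\le\eta$, hence $\hat{\zb}^j\in\intx{\Zc}$, for all $j\ge0$.

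To obtain \eqref{eq:diff_nt_decrement}, I apply the same nonexpansiveness estimate comparing $\tau=\tau_j$ with $\tau=0$: the $\tau=0$ decrement is precisely $\lambda_{t_0}(\cdot)$ of the original barrier problem \eqref{eq:barrier_mono_inc}, so $\lambda_{t_0}(\hat{\zb}^j)\le\hat{\lambda}_{\tau_j}(\hat{\zb}^j)+\tau_j t_0^{-1}\Vert\hat{\zeta}^0\Vert_{\hat{\zb}^j}^{\ast}\le\eta+\tau_j t_0^{-1}\Vert\hat{\zeta}^0\Vert_{\hat{\zb}^j}^{\ast}$. Using $\Vert\hat{\zeta}^0\Vert_{\hat{\zb}}^{\ast}\le\kappa\Vert\hat{\zeta}^0\Vert_{\bar{\zb}^{\star}_F}^{\ast}$ uniformly along the iterates (the inequality below \eqref{eq:analytical_center}) and choosing each $\Delta_i$ at its cap so that $\Delta_i\Vert\hat{\zeta}^0\Vert_{\hat{\zb}^i}^{\ast}=t_0\big(\frac{c\sqrt{\eta}}{1+c\sqrt{\eta}}-\eta\big)$, whence $\Delta_i\ge t_0\big(\frac{c\sqrt{\eta}}{1+c\sqrt{\eta}}-\eta\big)/\big(\kappa\Vert\hat{\zeta}^0\Vert_{\bar{\zb}^{\star}_F}^{\ast}\big)$, the telescoping $\tau_j=1-\sum_{i<j}\Delta_i$ yields $\tau_j t_0^{-1}\Vert\hat{\zeta}^0\Vert_{\hat{\zb}^j}^{\ast}\le t_0^{-1}\kappa\Vert\hat{\zeta}^0\Vert_{\bar{\zb}^{\star}_F}^{\ast}-j\big(\frac{c\sqrt{\eta}}{1+c\sqrt{\eta}}-\eta\big)$, i.e. the second inequality in \eqref{eq:diff_nt_decrement}. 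The bound on $j_{\max}$ then follows by forcing this affine-in-$j$ estimate down to $\beta$ and solving for $j$, and the $\mathcal{O}\big(\kappa\Vert\hat{\zeta}^0\Vert_{\bar{\zb}^{\star}_F}^{\ast}/t_0\big)$ complexity is read off by treating $c,\beta,\eta$ as absolute constants.

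The step I expect to be the main obstacle is the one-step invariance: pinning down the precise admissible tolerance $\bar{\delta}_{\tau}(\eta)$ (and, in the same spirit, the step cap $\bar{\mu}_{\eta}$) requires carefully estimating the rational map on the right of \eqref{eq:key_estimate1} near $a=\frac{c\sqrt{\eta}}{1+c\sqrt{\eta}}$ and invoking its monotonicity so that the worst case over $\hat{\lambda}_{\tau_{j+1}}(\hat{\zb}^j)\in[0,a]$ is attained at $a$. This is delicate but routine, and the cleanest route is to reuse, verbatim, the corresponding estimate from the proof of Lemma~\ref{le:update_penalty_param}.
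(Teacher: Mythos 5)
Your proposal is correct and follows essentially the same route as the paper's proof: recognize that $\nabla^2F$ is unchanged by the $\tau$-shift so Theorem~\ref{th:main_estimate1} applies, obtain the one-step decrement estimate $\hat{\lambda}_{\tau_{j+1}}(\hat{\zb}^j)\le\hat{\lambda}_{\tau_j}(\hat{\zb}^j)+\Delta_j t_0^{-1}\Vert\hat{\zeta}^0\Vert_{\hat{\zb}^j}^\ast$, transplant the Lemma~\ref{le:update_penalty_param} computation with $\eta$ in place of $\beta$ to get invariance of the neighborhood, then compare against $\tau=0$ and telescope using the uniform bound $\Vert\hat{\zeta}^0\Vert_{\hat{\zb}}^\ast\le\kappa\Vert\hat{\zeta}^0\Vert_{\bar{\zb}^\star_F}^\ast$. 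The only cosmetic difference is that you justify the one-step perturbation estimate by the nonexpansiveness property \eqref{eq:Pc_oper_property} directly, whereas the paper re-derives it from the monotonicity of $\Ac$ via the defining inclusions of the two resolvent points; these are equivalent.
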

Theorem~\ref{th:complexity_of_phase_1} suggests us to choose $t_0 := \kappa$.
In this case, the maximum number of iterations in Phase 1 does not exceed $ \frac{ (1 + c\sqrt{\eta})\Vert\hat{\zeta}^0\Vert_{\bar{\zb}^{\star}_F}^{\ast}}{c\sqrt{\eta} - \eta(1 + c\sqrt{\eta})}$, which is a constant.

\begin{remark}\label{re:analytical_center}
From \eqref{eq:diff_nt_decrement}, we can compute $\Vert\hat{\zeta}^0\Vert_{\hat{\zb}^j}^{\ast}$ directly in order to terminate \eqref{eq:inexact_pf_scheme_phase1} by checking $\tau_j\Vert\hat{\zeta}^0\Vert_{\hat{\zb}^j}^{\ast} \leq t_0(\beta-\eta)$.
Hence, \eqref{eq:inexact_pf_scheme_phase1} does not require to evaluate the analytical center $\bar{\zb}^{\star}_F$ of $F$.
If $F$ is a self-concordant logarithmically homogeneous barrier, then we simply choose $t_0 := 1$.
Otherwise, we can choose $t_0 := \nu + 2\sqrt{\nu}$.
\end{remark}

%%% (e). Two-phase inexact path-following generalized Newton algorithm.
\beforesubsubsec
\subsubsection{Two-phase inexact path-following generalized Newton algorithm}
\aftersubsubsec
Putting two schemes \eqref{eq:inexact_pf_scheme_phase1} and \ref{eq:inexact_pf_scheme} together, we obtain a two-phase inexact path-following generalized Newton  algorithm for solving \eqref{eq:mono_inclusion} as described in Algorithm \ref{alg:A1}.

%%%%%%%%%%%%%%%%%%%%%%%%%%%%%%%%%%%%%%%%%%
%%% + Algorithm 1.
%%%%%%%%%%%%%%%%%%%%%%%%%%%%%%%%%%%%%%%%%%
\vspace{-2ex}
\begin{algorithm}[ht!]\caption{(\textit{Two-phase inexact path-following generalized Newton  algorithm})}\label{alg:A1}
\begin{normalsize}
\begin{algorithmic}[1]
    \STATE {\bfseries Initialization:}
     \STATE  \hspace{0.16cm} Choose an arbitrary initial point $\hat{\zb}^0 \in\intx{\Zc}$ and a desired accuracy $\varepsilon > 0$
     \STATE  \hspace{0.16cm} Fix $t_0 > 0$ and $c\in (0, 1]$ (e.g., $t_0 := \kappa$, and $c := 0.95$).
      \STATE  \hspace{0.16cm} Compute $\hat{\xi}^0\in\Ac(\hat{\zb}^0)$ and evaluate $\nabla{F}(\hat{\zb}^0)$.
       Set $\hat{\zeta}^0 := t_0\nabla{F}(\hat{\zb}^0) + \hat{\xi}^0$ and $\tau_0 := 1$.
       \STATE  \hspace{0.16cm} Fix $\beta$ as in Theorem \ref{th:ppf_convergence} (e.g., $\beta := \frac{1}{9c^2}$) and choose  $\eta < \beta$ (e.g., $\eta := 0.5\beta$).
       \STATE\label{step:6}\hspace{0.16cm} Compute $\bar{\delta}_{\tau}$, $\bar{\mu}_{\eta}$, $\bar{\delta}_t$ and $\bar{\sigma}_{\beta}$ by \eqref{eq:choice_of_sigma0} and \eqref{eq:choice_of_sigma}, respectively, and $M_0$ from Theorem~\ref{th:ppf_convergence}.\\
   \vspace{-1ex}
    \rule{0.88\textwidth}{0.1mm}
    \STATE\begin{center}\textbf{Phase 1: Computing an initial point by path-following iterations}\vspace{-1ex} \end{center}
    \rule{0.88\textwidth}{0.1mm}
   \STATE {\bfseries For} $j = 0, \cdots, j_{\max}$, \textbf{perform:}
   \STATE\label{step:init_z0}\hspace{0.16cm} If $\tau_j\Vert\hat{\zeta}^0\Vert_{\hat{\zb}^j}^{\ast} \leq t_0(\beta-\eta)$, then set $\zb^0 := \hat{\zb}^j$ and TERMINATE.
   \STATE \hspace{0.16cm} Update $(\hat{\zb}^{j+1}, \tau_{j+1})$ by \eqref{eq:inexact_pf_scheme_phase1} with $\Delta_j := \tfrac{\bar{\mu}_{\eta}}{\Vert\hat{\zeta}_0\Vert_{\hat{\zb}^j}^{\ast}}$ up to an accuracy $\hat{\delta}_j \leq \bar{\delta}_{\tau}$.
   \STATE {\bfseries End for}\\
   \vspace{-1ex}
   \rule{0.88\textwidth}{0.1mm}
    \STATE\begin{center}{\textbf{Phase 2: Inexact path-following generalized Newton iterations}}\vspace{-1ex}\end{center}
    \rule{0.88\textwidth}{0.1mm}
   \STATE {\bfseries For} $k = 0, \cdots, k_{\max}$, \textbf{perform:}
   \STATE \hspace{0.16cm} If $M_0 t_k \leq \varepsilon$, then return $\zb^k$ as an $\varepsilon$-solution of \eqref{eq:mono_inclusion}, and TERMINATE.
   \STATE\label{step:pf_z} \hspace{0.16cm} Update $(\zb^{k+1}, t_{k+1})$ by \eqref{eq:inexact_pf_scheme} up to an accuracy $\delta_k \leq \bar{\delta}_t$.
   \STATE {\bfseries End for}
\end{algorithmic}
\end{normalsize}
\end{algorithm}
%%% End of the algorithm.
%%%%%%%%%%%%%%%%%%%%%%%%%%%%%%%%%%%%%%%%%%
\vspace{-2ex}

The main computational cost of Algorithm \ref{alg:A1} is the solution of the two linear monotone inclusions in \eqref{eq:inexact_pf_scheme_phase1} and \ref{eq:inexact_pf_scheme}, respectively.
When $\Ac = \partial{H}$ the subdifferential of a convex function $H$, various methods  including fast gradient, primal-dual methods, and splitting techniques can be used to solve these problems \cite{Beck2009,Becker2012a,Boyd2011,Esser2010a,friedlander2016efficient,Nesterov2007}.

The overall worst-case iteration-complexity of Algorithm \ref{alg:A1} is given in the following theorem which is a direct consequence of Lemma~\ref{le:sol_measure}, Theorems \ref{th:ppf_convergence} and \ref{th:complexity_of_phase_1}.

%%% Theorem 3.6.
\begin{theorem}\label{th:overall_complexity}
Let us choose $t_0 := \kappa$ as defined below \eqref{eq:analytical_center}.
Then, the overall worst-case iteration-complexity of Algorithm \ref{alg:A1} to achieve an $\varepsilon$-solution $\zb^k$ of \eqref{eq:mono_inclusion} as in Definition \ref{de:MVIP_approx_sol} is
\begin{equation*}
\mathcal{O}\left( \frac{\kappa\Vert\hat{\zeta}^0\Vert_{\bar{\zb}^{\star}_F}^{\ast}}{t_0} + \sqrt{\nu}\ln\left(\frac{M_0t_0}{\varepsilon}\right) \right) ~~~~~\left(\text{or simpler}~~\mathcal{O}\left(\sqrt{\nu}\ln\left(\frac{\kappa\sqrt{\nu}}{\varepsilon}\right)\right) \right),
\end{equation*}
where $t_0 > 0$ is an initial penalty parameter and $M_0 = \mathcal{O}(\sqrt{\nu})$ is defined in Theorem~\ref{th:ppf_convergence}.
\end{theorem}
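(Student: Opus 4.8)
The plan is to chain together the two phases of Algorithm~\ref{alg:A1} and then simplify the resulting bound. First I would invoke Theorem~\ref{th:complexity_of_phase_1}: with the parameter choices fixed in the initialization of Algorithm~\ref{alg:A1} (in particular $c\in(0,1]$, $\beta$ admissible as in Theorem~\ref{th:ppf_convergence}, $\eta<\beta$, $\Delta_j:=\bar{\mu}_{\eta}/\Vert\hat{\zeta}^0\Vert^{\ast}_{\hat{\zb}^j}$, and $\hat{\delta}_j\le\bar{\delta}_{\tau}$), the auxiliary decrement stays controlled, $\hat{\lambda}_{\tau_j}(\hat{\zb}^j)\le\eta$ for all $j$, and the termination test $\tau_j\Vert\hat{\zeta}^0\Vert^{\ast}_{\hat{\zb}^j}\le t_0(\beta-\eta)$ in Step~\ref{step:init_z0} fires after at most $j_{\max}=\mathcal{O}\!\big(\kappa\Vert\hat{\zeta}^0\Vert^{\ast}_{\bar{\zb}^{\star}_F}/t_0\big)$ iterations, returning $\zb^0:=\hat{\zb}^{j_{\max}}\in\intx{\Zc}$ with $\lambda_{t_0}(\zb^0)\le\beta$. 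This is precisely hypothesis~(ii) of Theorem~\ref{th:ppf_convergence}, so Phase~1 delivers a legitimate warm start for Phase~2.

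Next I would apply Theorem~\ref{th:ppf_convergence}(a)--(b) with this $\zb^0$ and $t_0$: the Phase~2 iterates satisfy $\lambda_{t_k}(\zb^k)\le\beta$ for all $k$, the penalty parameter contracts geometrically, $t_k=(1-\sigma_{\beta})^k t_0$ with $1/\sigma_{\beta}\ge 1/\bar{\sigma}_{\beta}=\mathcal{O}(\sqrt{\nu})$, and the stopping test $M_0 t_k\le\varepsilon$ is met after at most $k_{\max}=\lfloor(1/\bar{\sigma}_{\beta})\ln(M_0 t_0/\varepsilon)\rfloor+1=\mathcal{O}\!\big(\sqrt{\nu}\,\ln(M_0 t_0/\varepsilon)\big)$ iterations. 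To see that the returned point is genuinely an $\varepsilon$-solution I would appeal to Lemma~\ref{le:sol_measure}: since $\delta_k\le\bar{\delta}_t$ and $\lambda_{t_k}(\zb^k)\le\beta$, we have $\lambda_{t_k}(\zb^k)+\delta_k<1$, and the coefficient on the right-hand side of \eqref{eq:sol_measure} is bounded by exactly the constant $M_0$ of Theorem~\ref{th:ppf_convergence}; hence $M_0 t_k\le\varepsilon$ forces $\mathrm{dist}_{\zb^k}(\boldsymbol{0},\Ac_{\Zc}(\zb^k))\le\varepsilon$, which is Definition~\ref{de:MVIP_approx_sol}. Adding $j_{\max}+k_{\max}$ yields the first displayed complexity bound.

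For the simplified form, I would set $t_0:=\kappa$ (as in Remark~\ref{re:analytical_center}). Then the Phase~1 term reduces to $\mathcal{O}\!\big(\Vert\hat{\zeta}^0\Vert^{\ast}_{\bar{\zb}^{\star}_F}\big)$, which is a constant in the asymptotic parameters $\nu\to\infty$ and $\varepsilon\to 0^{+}$ and is therefore dominated by the Phase~2 term; and, using $M_0=\mathcal{O}(\sqrt{\nu})$ from Theorem~\ref{th:ppf_convergence}(b) together with $t_0=\kappa$, we get $\ln(M_0 t_0/\varepsilon)=\mathcal{O}\!\big(\ln(\kappa\sqrt{\nu}/\varepsilon)\big)$. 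Combining, the overall iteration count is $\mathcal{O}\!\big(\sqrt{\nu}\,\ln(\kappa\sqrt{\nu}/\varepsilon)\big)$, as claimed.

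The argument is essentially bookkeeping, and the substantive work has already been done in Theorems~\ref{th:ppf_convergence} and~\ref{th:complexity_of_phase_1} and Lemma~\ref{le:sol_measure}. The only point that needs care — and the step I expect to be the main obstacle — is verifying that the single set of parameter choices $(c,\beta,\eta,\hat{\delta}_j,\delta_k)$ fixed once and for all in the initialization of Algorithm~\ref{alg:A1} simultaneously lies in the admissible ranges required by all three cited results, so that the hypotheses truly chain: the bound $\lambda_{t_0}(\zb^0)\le\beta$ produced by Phase~1 must be exactly the $\beta$-neighborhood condition consumed by Phase~2, and the tolerance ceilings $\bar{\delta}_{\tau}$ and $\bar{\delta}_t$ used in the two phases must be the ones appearing in \eqref{eq:choice_of_sigma0} and \eqref{eq:choice_of_sigma}. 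No genuinely new analytic estimate is needed.
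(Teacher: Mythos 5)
Your proposal is correct and follows essentially the same route as the paper's proof: add the Phase~1 bound from Theorem~\ref{th:complexity_of_phase_1} to the Phase~2 bound from Theorem~\ref{th:ppf_convergence}, use Lemma~\ref{le:sol_measure} to justify the stopping test, and then simplify with $t_0=\kappa$ and $M_0=\mathcal{O}(\sqrt{\nu})$. The extra care you flag about the parameter ranges $(c,\beta,\eta,\hat{\delta}_j,\delta_k)$ being jointly admissible is a reasonable sanity check but does not change the substance; the paper's proof treats this implicitly, exactly as you do.
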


% Proof of Theorem 6.
\begin{proof}
The total number of iterations requires in Phase 1 and Phase 2 of Algorithm~\ref{alg:A1} is
 \begin{equation*}
 K_{\max} \geq   \frac{ \kappa(1 + c\sqrt{\eta})\Vert\hat{\zeta}^0\Vert_{\bar{\zb}^{\star}_F}^{\ast}}{t_0\left(c\sqrt{\eta} - \eta(1 + c\sqrt{\eta})\right)}  + C_2\ln\left(\frac{M_0 t_0}{\varepsilon}\right) - C_1.
 \end{equation*}
 where $C_1 :=  \frac{(\beta - \eta)(1 + c\sqrt{\eta})}{c\sqrt{\eta} - \eta(1 + c\sqrt{\eta}} $, and $C_2 := \left(\frac{(1+ c\sqrt{\beta})\sqrt{\nu} + c\sqrt{\beta}}{c\sqrt{\beta} - \beta(1 + c\sqrt{\beta})}\right)$.
 We note that $C_1$ is a constant, while $C_2 = \mathcal{O}(\sqrt{\nu})$.
 Hence, $K_{\max} \geq C_3\frac{\kappa\Vert\hat{\zeta}^0\Vert_{\bar{\zb}^{\star}_F}^{\ast}}{t_0} + \mathcal{O}(\sqrt{\nu})\ln\left(\frac{M_0t_0}{\varepsilon}\right) - C_1$, where $C_3 := \frac{1+c\sqrt{\eta}}{c\sqrt{\eta} - \eta(1 + c\sqrt{\eta})}$.
We can write this as  $K_{\max} \geq \mathcal{O}\left( \frac{\kappa\Vert\hat{\zeta}^0\Vert_{\bar{\zb}^{\star}_F}^{\ast}}{t_0} + \sqrt{\nu}\ln\left(\frac{M_0t_0}{\varepsilon}\right) \right)$.
We finally note that $M_0 = \mathcal{O}(\sqrt{\nu})$ and $t_0 = \kappa$, and the first term is a constant and independent of  $\varepsilon$, which is dominated by the second term. Hence, we obtain the simpler second estimate of Theorem~\ref{th:overall_complexity}.
\Eproof
\end{proof}
% End of the proof.

The complexity bound in Theorem \ref{th:overall_complexity} also \Shu{depends} on the choice of $\beta$, $\eta$ and $t_0$.
Adjusting these parameters allows us to trade-off between Phase 1 and Phase 2 in Algorithm \ref{alg:A1}.
Clearly, if $t_0$ is large, the number of iterations required in Phase 1 is small, but the number of iterations in Phase 2 is large, and vice versa.

%%%% Remark The worst-case iteration-complexity of \eqref{eq:pf_scheme} in the exact case.
\begin{remark}\label{re:exact_case}
We note that we can recover the convergence guarantee of the exact generalized Newton-type schemes as consequences of Theorems \ref{th:generalized_newton_convergence}, \ref{th:generalized_ds_newton_convergence} and \ref{th:overall_complexity}, respectively.
For instance, in the exact variant of \eqref{eq:inexact_pf_scheme}, if we can choose $\beta\in (0, 0.5(3-\sqrt{5}))$, then the upper bound $\bar{\sigma}_{\beta}$ in  \eqref{eq:choice_of_sigma} reduces to $\bar{\sigma}_{\beta} := \frac{\sqrt{\beta} - \beta(1 + \sqrt{\beta})}{(1+\sqrt{\beta})\sqrt{\nu} + \sqrt{\beta}}$.
Hence, we can show that the worst-case iteration-complexity estimate of this exact scheme \Shu{coincides} with the standard path-following scheme for smooth structural convex programming given in \cite[Theorem 4.2.9]{Nesterov2004}.
\end{remark}

%%%%%%%%%%%%%%%%%%%%%%%%%%%%%%%%%%%%%%%%
%%% 4. Primal-dual proximal path-following algorithm
%%%%%%%%%%%%%%%%%%%%%%%%%%%%%%%%%%%%%%%%
\beforesec
\section{Inexact  path-following proximal  Newton algorithms}\label{sec:pd_path_following}
\aftersec
\vspace{1ex}
We now specify our framework, Algorithm \ref{alg:A1}, to solve three  problems:  \eqref{eq:constr_cvx},  \eqref{eq:constr_cvx2} and \eqref{eq:saddle_point_prob}.

%%%% 4.1. Inexact primal-dual proximal path-following algorithm for saddle point problems
\beforesubsec
\subsection{\bf Inexact primal-dual path-following algorithm for saddle-point problems}
\aftersubsec
We recall the convex-concave saddle-point problem \eqref{eq:saddle_point_prob}.
Our primal-dual path-following proximal  Newton method relies on the following assumption.

% Assumption A.1.
\begin{assumption}\label{as:A1}
\begin{itemize}
\item[$\mathrm{(a)}$] The feasible set $\Xc$ $($resp., $\Yc$$)$ is a nonempty, closed, and convex cone with nonempty interior, and is endowed with a $\nu_f$-self-concordant barrier $f$ $($respectively, a $\nu_{\varphi}$-self-concordant barrier $\varphi$$)$ such that $\mathrm{Dom}(f) = \Xc$ $($respectively, $\mathrm{Dom}(\varphi)  = \Yc$$)$.
\item[$\mathrm{(b)}$] Both $g$ and \Shu{$\psi$}  in \eqref{eq:saddle_point_prob} are proper, closed and convex such that $\intx{\Xc}\cap\dom{g}\neq\emptyset$ and $\intx{\Yc}\cap\dom{\psi}\neq\emptyset$.
\item[$\mathrm{(c)}$] The solution set $\Zc^{\star}$ of \eqref{eq:saddle_point_prob} is nonempty.
\end{itemize}
\end{assumption}
% End of Assumption A.1.
For any $\zb = (\xb, \yb)$, $\hat{\zb} = (\hat{\xb}, \hat{\yb})$, $(\xi_g, \xi_{\Shu{\psi}}) \in\partial{g}(\xb)\times\partial{\Shu{\psi}}(\yb)$, and $(\hat{\xi}_g,\hat{\xi}_{\Shu{\psi}}) \in\partial{g}(\hat{\xb}) \times \partial{\Shu{\psi}}(\hat{\yb})$:
\begin{align*}
\begin{bmatrix}\xi_g - L^{\ast}\yb - \hat{\xi}_g + L^{\ast}\hat{\yb}\\ \xi_{\Shu{\psi}} + L\xb - \hat{\xi}_{\Shu{\psi}} - L\hat{\xb}\end{bmatrix}^{\top}\begin{bmatrix} \xb - \hat{\xb}\\ \yb - \hat{\yb}\end{bmatrix} \geq 0.
\end{align*}
This shows that $\Ac$ defined by \eqref{eq:pd_map} is maximally monotone.
In addition, $F$ is a self-concordant barrier of $\Zc = \Xc\times \Yc$ with the barrier parameter $\nu := \nu_f+\nu_{\varphi}$.

%%% Primal-dual proximal path-following scheme.
\beforesubsubsec
\subsubsection{Inexact primal-dual path-following  proximal  Newton method}
\aftersubsubsec
We specify \ref{eq:inexact_pf_scheme}  to solve \eqref{eq:saddle_point_prob}.
Let $\zb^k := (\xb^k, \yb^k)$ be a given point at $t_k > 0$.
We update  $\zb^{k + 1} := (\xb^{k + 1}, \yb^{k + 1})$ and $t_{k + 1}$ using \ref{eq:inexact_pf_scheme}, which reduces to the following form:
\begin{equation}\label{eq:cvx_subprob_saddle_point}
\left\{\begin{array}{ll}
0 &\in t_{k+1}\left[\nabla{f}(\xb^k) +   \nabla^2{f}(\xb^k)(\xb - \xb^k)\right] - L^{\ast}\yb + \partial{g}(\xb),\vspace{1ex}\\
0 &\in t_{k+ 1}\left[\nabla{\varphi}(\yb^k)  + \nabla^2{\varphi}(\yb^k)(\yb - \yb^k)\right] + L\xb + \partial{\psi}(\yb).
\end{array}\right.
\end{equation}
Here, we solve \eqref{eq:cvx_subprob_saddle_point} approximately as done in \ref{eq:inexact_pf_scheme}.
Hence,  \ref{eq:inexact_pf_scheme} can be rewritten as
\begin{equation}\label{eq:cvx_subprob2b}
{\!\!\!\!}\left\{ \begin{array}{lll}
t_{k  + 1} &:= (1-\sigma_{\beta})t_k, & \vspace{1ex}\\
\zb^{k +1} &\approx \bar{\zb}^{k+ 1} \! :=\! \displaystyle\mathrm{arg}\!\min_{\yb}\!\max_{\xb}\Big\{  t_{k + 1}Q_{\varphi}(\yb;\yb^k) \!+\! \psi(\yb)  \!+\!  \iprods{L\xb,\yb} \!-\! t_{k+1}Q_f(\xb;\xb^k) \!-\! g(\xb)  \Big\},
\end{array}\right.{\!\!\!\!}
\end{equation}
where $Q_f(\cdot; \xb^k)$ and $Q_{\varphi}(\cdot; \yb^k)$ are the quadratic surrogates of $f$ and $\varphi$, respectively, i.e.:
\vspace{-0.5ex}
\begin{equation}\label{eq:cvx_subprob2b1}
\left\{ \begin{array}{lll}
&Q_f(\xb;\xb^k)  &:= \iprods{\nabla{f}(\xb^k), \xb-\xb^k} + \tfrac{1}{2}\iprods{\nabla^2{f}(\xb^k)(\xb - \xb^k), \xb -\xb^k},\vspace{1ex}\\
&Q_{\varphi}(\yb;\yb^k) &:=  \iprods{\nabla{\varphi}(\yb^k), \yb - \yb^k } + \tfrac{1}{2}\iprods{\nabla^2{\varphi}(\yb^k)(\yb - \yb^k), \yb - \yb^k}.
\end{array}\right.
\vspace{-0.5ex}
\end{equation}
The second line of \eqref{eq:cvx_subprob2b} is again a linear convex-concave saddle-point problem with strongly convex objectives.
Methods for solving this problem can be found, e.g., in \cite{Bauschke2011,Chambolle2011,Esser2010a}.

%%% The algorithm details and its convergence.
\beforesubsubsec
\subsubsection{Finding initial point}
\aftersubsubsec
We specify \eqref{eq:inexact_pf_scheme_phase1} for finding an initial point as follows.
\begin{enumerate}
\vspace{-0.5ex}
\item Provide a value $t_0 > 0$ (e.g., $t_0 := \kappa$), and an initial point $\hat{\zb}^0 := (\hat{\xb}^0, \hat{\yb}^0) \in \intx{\Zc}$.

\vspace{0.25ex}
\item Compute a subgradient $\hat{\xi}^0_g\in\partial{g}(\hat{\xb}^0)$ and $\hat{\xi}_{\psi}^0\in\partial{\psi}(\hat{\yb}^0)$, and  evaluate $\nabla{f}(\hat{\xb}^0)$ and $\nabla{\varphi}(\hat{\yb}^0)$.

\vspace{0.25ex}
\item Define $\hat{\zeta}^0_g := t_0\nabla{f}(\hat{\xb}^0) - L^{\ast}\hat{\yb}^0 + \hat{\xi}^0_g$ and $\hat{\zeta}^0_{\psi} := t_0\nabla{\varphi}(\hat{\yb}^0) + L\hat{\xb}^0 + \hat{\xi}^0_{\psi}$.

\vspace{0.25ex}
\item Perform  Phase 1 of Algorithm \ref{alg:A1} applied to solve \eqref{eq:saddle_point_prob} as follows:
\vspace{-0.5ex}
\begin{equation}\label{eq:cvx_subprob2b_x0}
{\!\!\!\!\!}\left\{\begin{array}{ll}
\tau_{j + 1} &{\!\!} := \tau_j - \bar{\Delta}_{j}, \vspace{1ex}\\
\hat{\zb}^{j + 1} &{\!\!}\approx \bar{\hat{\zb}}^{j + 1} :=  \displaystyle\mathrm{arg}\min_{\yb}\max_{\xb}\Big\{ t_0Q_{\varphi}(\yb;\hat{\yb}^j) - \tau_{j  + 1}\iprods{\hat{\zeta}^0_{\psi},\yb} + \psi(\yb) +  \iprods{L\xb,\yb}  \vspace{1ex}\\
&{~~~~~~~~~~~~~~~~~~~~~~~~~~~~~~} - t_0Q_f(\xb;\hat{\xb}^j) + \tau_{j  +1}\iprods{\hat{\zeta}^0_g,\xb} - g(\xb) \Big\}.\\
\end{array}\right.{\!\!\!\!\!}
\vspace{-0.5ex}
\end{equation}
Here, $\tau \in (0, 1]$ is referred to as a new homotopy parameter starting from $\tau_0 := 1$.
\vspace{-0.5ex}
\end{enumerate}
Now, we substitute this scheme into Phase 1 and \eqref{eq:cvx_subprob2b} into Step~\ref{step:pf_z} of Algorithm \ref{alg:A1}, respectively, \Shu{to }obtain a new variant to solve \eqref{eq:saddle_point_prob}, which we call Algorithm~\ref{alg:A1}(a).

The worst-case iteration-complexity of Algorithm~\ref{alg:A1}(a) to achieve an $\varepsilon$-primal-dual solution $\zb^k := (\xb^k, \yb^k)$ in the sense of Definition \ref{de:MVIP_approx_sol} for the optimality condition \eqref{eq:saddle_point_opt_cond} instead of \eqref{eq:mono_inclusion} remains guaranteed by Theorem \ref{th:overall_complexity}.
We omit the \Tianxiao{detailed} proof in this paper.

\beforesubsec
\subsection{\bf Inexact path-following primal proximal  Newton algorithm}\label{subsec:primal_path_following}
\aftersubsec
We present an inexact  primal path-following proximal Newton method obtained from Algorithm \ref{alg:A1} to solve \eqref{eq:constr_cvx}.
This algorithm has  several new  features compared to \cite{TranDinh2013e,TranDinh2015f}.

First, associated with the barrier function $f$ of $\Xc$ in \eqref{eq:constr_cvx}, we define the local norm $\Vert\ub\Vert_{\xb} := \iprods{\nabla^2f(\xb)\ub,\ub}^{1/2}$ and its corresponding dual norm $\Vert\vb\Vert_{\xb}^{\ast} := \iprods{\nabla^2f(\xb)^{-1}\vb,\vb}^{1/2}$ for a given $\xb\in\dom{f}$.
Next, let $Q_f$ be the quadratic surrogate of $f$ around $\xb^k$ as defined in \eqref{eq:cvx_subprob2b1}.
Then, the main step of Algorithm \ref{alg:A1} applied to \eqref{eq:constr_cvx} performs the following inexact path-following proximal Newton scheme:
\vspace{-0.5ex}
\begin{equation}\label{eq:primal_cvx_subprob}
\left\{\begin{array}{ll}
t_{k+1} &:= (1-\sigma_{\beta})t_k,\vspace{1.0ex}\\
\xb^{k+1} &\approx \bar{\xb}^{k+1} := \displaystyle\argmin_{\xb}\set{h_k(\xb) := Q_f(\xb; \xb^k) + t_{k+1}^{-1}g(\xb)},
\end{array}\right.
\vspace{-0.5ex}
\end{equation}
Here, the approximation $\approx$ \Shu{is in the sense of Definition \ref{de:approx_sol} and implies  $\Vert\xb^{k+1} - \bar{\xb}^{k+1}\Vert_{\xb^k} \leq \delta_k$} for a given tolerance $\delta_k \geq 0$. As shown in  \cite{TranDinh2013e}, this condition is satisfied if
\vspace{-0.5ex}
\begin{equation*}
h_k(\xb^{k+1}) - h_k(\bar{\xb}^{k+1}) \leq 0.5\delta_k^2,
\vspace{-0.5ex}
\end{equation*}
where $h_k(\cdot)$ is the objective function of \eqref{eq:primal_cvx_subprob}.
This condition is different from Definition~\ref{de:approx_sol}, where we can check it by evaluating the objective values.

We redefine the following proximal Newton decrement using \eqref{eq:prox_oper} as follows:
\vspace{-0.5ex}
\begin{equation}\label{eq:PN_decrement2}
\lambda_t(\xb) := \big\Vert \xb - \prox_{(t\nabla^2f(\xb))^{-1}g}\big(\xb - \nabla^2{f}(\xb)\nabla{f}(\xb)\big)\big\Vert_{\xb}.
\vspace{-0.5ex}
\end{equation}
Although the scheme \eqref{eq:primal_cvx_subprob} has been studied in \cite{TranDinh2013e,TranDinh2015f}, the following features are new.
\begin{enumerate}
\item \textit{Phase 1: Finding initial point $\xb^0$}:
We solve the following auxiliary problem by applying  \eqref{eq:inexact_pf_scheme_phase1} to obtain an initial point $\xb^0\in\intx{\Xc}$ such that $\lambda_{t_0}(\xb^0) \leq \beta$:
\vspace{-0.5ex}
\begin{equation}\label{eq:auxi_prob_constr_cvx}
\min_{\xb}\set{ f(\xb) - \tau\iprods{\nabla{f}(\hat{\xb}^0) + t_0^{-1}\hat{\xi}^0, \xb} + t_0^{-1}g(\xb)},
\vspace{-0.5ex}
\end{equation}
where $\hat{\xb}^0\in\intx{\Xc}$ is an arbitrary initial point, and $\hat{\xi}^0 \in\partial{g}(\hat{\xb}^0)$.
The inexact proximal path-following scheme for solving \eqref{eq:auxi_prob_constr_cvx} rendering from \eqref{eq:inexact_pf_scheme_phase1} becomes
\vspace{-0.5ex}
\begin{equation}\label{eq:primal_cvx_subprob0}
\left\{\begin{array}{ll}
\tau_{j+1} &:= \tau_j - \bar{\Delta}_{j},\vspace{0.75ex}\\
\hat{\xb}^{j+1} &\approx \bar{\hat{\xb}}^{j+1} := \displaystyle\argmin_{\xb}\set{Q_f(\xb;\hat{\xb}^j) - \tau_{j+1}\iprods{\nabla{f}(\hat{\xb}^0) + t_0^{-1}\hat{\xi}^0, \xb} + t_0^{-1}g(\xb)}.
\end{array}\right.
\vspace{-0.5ex}
\end{equation}
\item \textit{New neighborhood of the central path}:
We choose $\beta \in (0, 0.329)$ which is approximately twice larger than $\beta\in (0, 0.15]$ as given in \cite{TranDinh2013e}.

\item \textit{Adaptive rule for $t$}: We can update $t_k$ in \eqref{eq:primal_cvx_subprob} adaptively using the value $\Vert\nabla{f}(\xb^k)\Vert_{\xb^k}^{\ast}$ as
\vspace{-0.5ex}
\begin{equation*}
t_{k+1} := (1-\sigma_k)t_k, ~~\text{where}~\sigma_k := \frac{c\sqrt{\beta} - \beta(1+c\sqrt{\beta})}{(1+c\sqrt{\beta})\Vert\nabla{f}(\xb^k)\Vert_{\xb^k}^{\ast} +c\sqrt{\beta}}\geq\bar{\sigma}_{\beta}.
\vspace{-0.5ex}
\end{equation*}

\item \textit{Implementable stopping condition:}
We can terminate Phase 1 using $\tau_j\Vert \nabla{f}(\hat{\xb}^0) + t_0^{-1}\hat{\xi}^0\Vert_{\hat{\xb}^j}^{\ast} \leq (\beta-\eta)$, which is implementable without incurring significantly computational cost.
\end{enumerate}
Let us denote this algorithmic variant by Algorithm~\ref{alg:A1}(b).
Instead of terminating this algorithmic variant with  $t_k \leq \frac{\varepsilon}{M_0}$, we  use $\Delta(\beta,\nu)t_k \leq \varepsilon$ to terminate Algorithm~\ref{alg:A1}(b), where $\Delta(\beta,\nu)$ is a function defined as in \cite[Lemma 5.1.]{TranDinh2013e}.
The following corollary provides the worst-case iteration-complexity of Algorithm~\ref{alg:A1}(b) as a direct consequence of Theorem \ref{th:overall_complexity}.

%%% Corollary 4.1.
\begin{corollary}\label{co:pf_convergence2}
Let us choose $t_0 :=  \kappa$ defined  below the formula \eqref{eq:analytical_center}.
Then, the worst-case iteration-complexity of Algorithm~$\mathrm{\ref{alg:A1}(b)}$ to achieve an $\varepsilon$-solution $\xb^k$ of \eqref{eq:constr_cvx} such that $\xb^k\in\Xc$ and $g(\xb^k) - g^{\star} \leq \varepsilon$ is
\vspace{-0.5ex}
\begin{equation*}
\mathcal{O}\left( \frac{\kappa\Vert \nabla{f}(\hat{\xb}^0) + t_0^{-1}\hat{\xi}^0 \Vert_{\bar{\xb}^{\star}_f}^{\ast}}{t_0} + \sqrt{\nu}\ln\left(\frac{\Delta(\beta,\nu)t_0}{\varepsilon}\right) \right) ~~~~~\left(\text{or simpler}~~\mathcal{O}\left(\sqrt{\nu}\ln\left(\frac{\kappa \nu}{\varepsilon}\right)\right) \right).
\vspace{-0.5ex}
\end{equation*}
\end{corollary}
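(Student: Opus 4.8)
The plan is to obtain Corollary~\ref{co:pf_convergence2} by reading off Theorem~\ref{th:overall_complexity} for the specialization of \eqref{eq:mono_inclusion} corresponding to \eqref{eq:constr_cvx}, and then inserting one extra step that converts the residual certificate of Definition~\ref{de:MVIP_approx_sol} into the objective gap $g(\xb^k) - g^{\star}$. First I would fix the identifications: with $\zb:=\xb$, $\Zc:=\Xc$, $F:=f$ and $\Ac:=\partial{g}$, Assumption~A.\ref{as:A0} holds for this instance --- $\Xc$ is closed, convex and carries a $\nu$-self-concordant barrier $f$, $\partial{g}$ is maximally monotone with $\intx{\Xc}\cap\dom{g}\neq\emptyset$, and $\Xopt\neq\emptyset$ --- so the whole of Section~\ref{sec:GNMs} applies. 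Under these identifications the Phase~2 recursion \eqref{eq:primal_cvx_subprob} is the \ref{eq:inexact_pf_scheme} scheme, the Phase~1 recursion \eqref{eq:primal_cvx_subprob0} is \eqref{eq:inexact_pf_scheme_phase1} applied to the auxiliary problem \eqref{eq:auxi_prob_constr_cvx}, the generalized Newton decrement \eqref{eq:nt_decrement} becomes the proximal Newton decrement \eqref{eq:PN_decrement2}, and the auxiliary vector of \eqref{eq:auxi_prob1} is $\hat{\zeta}^0 = t_0\big(\nabla{f}(\hat{\xb}^0) + t_0^{-1}\hat{\xi}^0\big)$, so that the quantity $t_0^{-1}\Vert\hat{\zeta}^0\Vert_{\bar{\zb}^{\star}_F}^{\ast}$ of Theorem~\ref{th:overall_complexity} is exactly $\Vert\nabla{f}(\hat{\xb}^0) + t_0^{-1}\hat{\xi}^0\Vert_{\bar{\xb}^{\star}_f}^{\ast}$. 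The adaptive rule $\sigma_k\ge\bar{\sigma}_{\beta}$ used in Algorithm~\ref{alg:A1}(b) only increases the decay of $t_k$, so the worst-case count is still governed by $\bar{\sigma}_{\beta}$ from \eqref{eq:choice_of_sigma}.

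Next I would change the stopping test. By Theorem~\ref{th:ppf_convergence}(a), every Phase~2 iterate satisfies $\lambda_{t_k}(\xb^k)\le\beta$, hence $\xb^k\in\intx{\Xc}$; and by \cite[Lemma~5.1]{TranDinh2013e}, $\lambda_{t_k}(\xb^k)\le\beta$ implies the primal-gap bound $g(\xb^k) - g^{\star}\le\Delta(\beta,\nu)\,t_k$ with $\Delta(\beta,\nu) = \mathcal{O}(\nu)$. Thus the criterion ``$M_0 t_k\le\varepsilon$'' of Algorithm~\ref{alg:A1} is replaced by ``$\Delta(\beta,\nu)t_k\le\varepsilon$'', and the returned $\xb^k$ then satisfies $\xb^k\in\Xc$ and $g(\xb^k) - g^{\star}\le\varepsilon$. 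Since $t_k = (1-\sigma_{\beta})^k t_0$ with $\sigma_{\beta}\ge\bar{\sigma}_{\beta} = \Theta(\nu^{-1/2})$, this criterion is met after at most $\big\lceil(-\ln(1-\sigma_{\beta}))^{-1}\ln(\Delta(\beta,\nu)t_0/\varepsilon)\big\rceil = \mathcal{O}\big(\sqrt{\nu}\ln(\Delta(\beta,\nu)t_0/\varepsilon)\big)$ Phase~2 iterations, i.e., Theorem~\ref{th:ppf_convergence}(b) with $M_0$ replaced by $\Delta(\beta,\nu)$. Adding the Phase~1 count from Theorem~\ref{th:complexity_of_phase_1}, which specializes to $\mathcal{O}\big(t_0^{-1}\kappa\Vert\nabla{f}(\hat{\xb}^0) + t_0^{-1}\hat{\xi}^0\Vert_{\bar{\xb}^{\star}_f}^{\ast}\big)$, yields the first displayed bound, exactly as in the proof of Theorem~\ref{th:overall_complexity}.

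For the simplified bound I would set $t_0:=\kappa$: the Phase~1 term collapses to $\mathcal{O}\big(\Vert\nabla{f}(\hat{\xb}^0) + \kappa^{-1}\hat{\xi}^0\Vert_{\bar{\xb}^{\star}_f}^{\ast}\big)$, which is a constant independent of $\varepsilon$, while $\ln(\Delta(\beta,\nu)\kappa/\varepsilon) = \mathcal{O}(\ln(\kappa\nu/\varepsilon))$, so the Phase~2 term dominates and equals $\mathcal{O}\big(\sqrt{\nu}\ln(\kappa\nu/\varepsilon)\big)$, giving the claim. The only step that is not a direct specialization of already-proved results is the passage from $\mathrm{dist}_{\xb^k}\big(\boldsymbol{0},\partial{g}(\xb^k)+\Nc_{\Xc}(\xb^k)\big)\le\varepsilon$ to $g(\xb^k) - g^{\star}\le\varepsilon$, and I expect this to be the main --- if minor --- obstacle: it rests on \cite[Lemma~5.1]{TranDinh2013e}, whose hypothesis (that $\xb^k$ lies in the $\beta$-neighborhood of the central path) is supplied by Theorem~\ref{th:ppf_convergence}(a); everything else is bookkeeping of constants.
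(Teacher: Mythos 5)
Your proposal is correct and takes essentially the same route as the paper: the paper itself declares Corollary~\ref{co:pf_convergence2} a ``direct consequence of Theorem~\ref{th:overall_complexity}'' with the stopping test swapped from $M_0 t_k\le\varepsilon$ to $\Delta(\beta,\nu)t_k\le\varepsilon$ via~\cite[Lemma~5.1]{TranDinh2013e}, and your fill-in of the specialization $(\zb,\Zc,F,\Ac)=(\xb,\Xc,f,\partial g)$, the identification $t_0^{-1}\Vert\hat{\zeta}^0\Vert^{\ast}_{\bar{\zb}^{\star}_F}=\Vert\nabla f(\hat{\xb}^0)+t_0^{-1}\hat{\xi}^0\Vert^{\ast}_{\bar{\xb}^{\star}_f}$, and the $\Delta(\beta,\nu)=\mathcal{O}(\nu)$ simplification at $t_0=\kappa$ are exactly the bookkeeping the authors leave implicit.
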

\vspace{-1ex}
We note that the worst-case iteration-complexity bound in Corollary \ref{co:pf_convergence2} is the overall iteration-complexity.
It is similar to the one given in \cite{TranDinh2015f} but the method is different.

%%% 4.3. Inexact dual proximal path-following algorithm
\beforesubsec
\subsection{\bf Inexact dual path-following proximal  Newton algorithm}\label{subsec:dual_path_following}
\aftersubsec
We develop an inexact dual path-following scheme to solve \eqref{eq:constr_cvx2}, which works in the dual space.
For simplicity of presentation, we assume that $W = \Id$. Otherwise, we can use $\hat{g}(\cdot) := g(W(\cdot))$.
We first write the barrier formulation of  the dual problem  \eqref{eq:dual_prob1} as follows:
\vspace{-0.5ex}
\begin{equation*}
\min_{\yb \in\R^p}\set{ tf^{\ast}(\cb - L^{\ast}\yb) + g^{\ast}(\yb) + \iprods{\bb, \yb} },
\vspace{-0.5ex}
\end{equation*}
where $t > 0$ is a penalty parameter.
This problem can shortly read  as
\vspace{-0.5ex}
\begin{equation}\label{eq:smooth_dual_prob}
\phi^{\star}_t := \min_{\yb\in\R^p}\Big\{ \phi_t(\yb) := \varphi(\yb) + t^{-1}\psi(\yb) \Big\},
\vspace{-0.5ex}
\end{equation}
where $\varphi$ and $\psi$ are two convex functions defined by
\vspace{-0.5ex}
\begin{equation}\label{eq:smooth_dual_term}
\varphi(\yb) := f^{*}\left(\cb - L^{*}\yb\right),~~~\text{and}~~~\psi(\yb) := g^{*}(\yb) + \iprods{\bb, \yb}.
\vspace{-0.5ex}
\end{equation}
In order to characterize the relation between the primal problem \eqref{eq:constr_cvx2} and its dual form \eqref{eq:dual_prob1}, we formally impose the following  assumption.

% Assumption A.1.
\begin{assumption}\label{as:A2}
The objective function $g$ in \eqref{eq:constr_cvx2} is proper, closed and convex.
The linear operator $L : \R^n\to\R^p$ is full-row rank with $n \leq p$.
The following Slater condition holds:
\begin{equation*}
\left(\intx{\Kc}\times\mathrm{ri}(\dom{g}) \right)\cap\set{(\xb, \sb) \mid L\xb - \sb = \bb} \neq\emptyset.
\end{equation*}
In addition, $\Kc$ is a nonempty, closed, and pointed convex cone such that $\intx{\Kc}\neq\emptyset$, and $\Kc$ is  endowed with a $\nu$-self-concordant logarithmically homogeneous barrier $f$ with $\mathrm{Dom}(f) = \Kc$.
The solution set $\Sc^{\star}$ of \eqref{eq:constr_cvx2} is nonempty.
\end{assumption}
The following lemma shows that  \Shu{$\varphi(\cdot) := f^{\ast}( \cb-L^{*}(\cdot))$} remains a $\nu$-self-concordant barrier associated with the dual feasible set, while the scaled proximal operator of $\psi$ can be computed from the one of $g$.
The proof of this lemma is classical and is omitted, see \cite{Bauschke2011,Nesterov1994}.

%% Lemma 2.1.
\begin{lemma}\label{le:pros_of_varphi}
Under Assumption A.\ref{as:A2},  $\varphi(\cdot)$ defined by \eqref{eq:smooth_dual_term} is a $\nu$-self-concordant barrier of the dual feasible set $\mathcal{D}_{\Yc} := \set{\yb \in\R^p \mid  L^{*}\yb - \cb \in\Kc^{*}}$.
The proximal operator of $\psi$ defined in \eqref{eq:smooth_dual_prob}  is computed as $\prox_{\Qb \psi}(\yb) = \yb - \Qb\bb - \Qb \prox_{\Qb^{-1}g}\left(\Qb^{-1}\yb - \bb\right)$ for any $\Qb \in\Sc^p_{++}$.
\end{lemma}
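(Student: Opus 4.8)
The plan is to prove the two assertions separately, reducing each to a standard building block.

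\emph{Self-concordance of $\varphi$.} First I would invoke the conjugacy fact recalled right after \eqref{eq:analytical_center}: under Assumption~A.\ref{as:A2} the barrier $f$ is a $\nu$-self-concordant logarithmically homogeneous barrier of the cone $\Kc$ with $\Dom{f}=\Kc$, hence its Legendre conjugate $f^{*}$ is a $\nu$-self-concordant barrier of the anti-dual cone $-\Kc^{*}$, with $\dom{f^{*}}=\intx{-\Kc^{*}}$. Next I would apply affine invariance of self-concordant barriers: if $G$ is a $\nu$-self-concordant barrier of a convex set $C$ and $\mathcal{L}(\yb):=\cb-L^{*}\yb$ is an affine map whose range meets $\intx{C}$, then $G\circ\mathcal{L}$ is a $\nu$-self-concordant barrier of $\mathcal{L}^{-1}(C)$ (see \cite{Nesterov1994}). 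Taking $G:=f^{*}$ and $C:=-\Kc^{*}$ gives that $\varphi(\yb)=f^{*}(\cb-L^{*}\yb)=(f^{*}\!\circ\mathcal{L})(\yb)$ is $\nu$-self-concordant on $\mathcal{L}^{-1}(-\Kc^{*})=\set{\yb\mid \cb-L^{*}\yb\in-\Kc^{*}}=\set{\yb\mid L^{*}\yb-\cb\in\Kc^{*}}=\mathcal{D}_{\Yc}$; moreover $\dom{\varphi}=\set{\yb\mid L^{*}\yb-\cb\in\intx{\Kc^{*}}}=\intx{\mathcal{D}_{\Yc}}$ and $\varphi(\yb)\to+\infty$ as $\yb$ tends to $\bdry{\mathcal{D}_{\Yc}}$, so $\varphi$ is a genuine barrier of $\mathcal{D}_{\Yc}$. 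The only hypothesis to check is $\mathcal{L}(\R^{p})\cap\intx{-\Kc^{*}}\neq\emptyset$, i.e.\ that $\mathcal{D}_{\Yc}$ has nonempty interior; this follows from the Slater condition of Assumption~A.\ref{as:A2} (which forces dual feasibility), while full-row rankness of $L$ makes $\nabla^{2}\varphi(\yb)=L\nabla^{2}f^{*}(\cb-L^{*}\yb)L^{*}\succ0$ and identifies $\intx{\mathcal{D}_{\Yc}}$ with the strict-inclusion set above.

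\emph{Proximal identity for $\psi$.} Using the convention of \eqref{eq:prox_oper}, $\prox_{\Qb\psi}(\yb)=\argmin_{\ub}\big\{g^{*}(\ub)+\iprods{\bb,\ub}+\tfrac12\Vert\ub-\yb\Vert_{\Qb^{-1}}^{2}\big\}$. First I would absorb the linear term by completing the square in the $\Qb^{-1}$-metric: since $\iprods{\bb,\ub}+\tfrac12\iprods{\Qb^{-1}(\ub-\yb),\ub-\yb}=\tfrac12\iprods{\Qb^{-1}(\ub-(\yb-\Qb\bb)),\ub-(\yb-\Qb\bb)}+\textrm{const}$, this reduces to $\prox_{\Qb\psi}(\yb)=\prox_{\Qb g^{*}}(\yb-\Qb\bb)$, where $\prox_{\Qb g^{*}}(\zb):=\argmin_{\ub}\{g^{*}(\ub)+\tfrac12\Vert\ub-\zb\Vert_{\Qb^{-1}}^{2}\}$. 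Then I would prove the weighted Moreau decomposition $\prox_{\Qb g^{*}}(\zb)=\zb-\Qb\,\prox_{\Qb^{-1}g}(\Qb^{-1}\zb)$: the optimality condition for $\ub:=\prox_{\Qb g^{*}}(\zb)$ is $\Qb^{-1}(\zb-\ub)\in\partial g^{*}(\ub)$; using $\vb\in\partial g^{*}(\ub)\Leftrightarrow\ub\in\partial g(\vb)$ and setting $\wb:=\Qb^{-1}(\zb-\ub)$ so that $\ub=\zb-\Qb\wb$, one gets $0\in\partial g(\wb)+\Qb(\wb-\Qb^{-1}\zb)$, which is precisely the optimality condition defining $\wb=\prox_{\Qb^{-1}g}(\Qb^{-1}\zb)$ in \eqref{eq:prox_oper}; hence $\ub=\zb-\Qb\wb$. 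Combining both steps with $\zb:=\yb-\Qb\bb$ gives $\prox_{\Qb\psi}(\yb)=\yb-\Qb\bb-\Qb\,\prox_{\Qb^{-1}g}(\Qb^{-1}\yb-\bb)$, the claimed identity.

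\emph{Main obstacle.} The second part is entirely mechanical and the affine-invariance step is textbook; the only point needing genuine care is the bookkeeping of cone/anti-cone signs in the first part and the verification that the regularity hypothesis of the affine-invariance theorem holds, so that $\mathcal{L}^{-1}(-\Kc^{*})=\mathcal{D}_{\Yc}$ with exactly the interior claimed. Since both ingredients are classical (\cite{Bauschke2011,Nesterov1994}), the write-up can remain short, as the statement already anticipates.
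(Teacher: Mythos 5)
The paper omits the proof, describing it as classical and pointing to \cite{Bauschke2011,Nesterov1994}; your write-up supplies exactly the two standard building blocks those references contain (affine invariance of the barrier parameter under composition with $\mathcal{L}(\yb)=\cb-L^{*}\yb$, and a weighted Moreau decomposition obtained from the optimality/subgradient-inversion identity), and both steps are carried out correctly, including the completion of the square that removes the linear term $\iprods{\bb,\cdot}$ and the sign bookkeeping $\mathcal{L}^{-1}(-\Kc^{*})=\mathcal{D}_{\Yc}$. This is the argument the authors had in mind, so there is nothing to flag.
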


Together with the primal local norm $\Vert\cdot\Vert_{\xb}$ given in Subsection~\ref{subsec:primal_path_following}, we also define a local norm with respect to $\varphi(\cdot)$ as $\Vert\ub\Vert_{\yb} := \iprods{\nabla^2{\varphi}(\yb)\ub,\ub}^{1/2}$ and its dual norm $\Vert\vb\Vert_{\yb}^{\ast} := \iprods{\nabla^2{\varphi}(\yb)^{-1}\vb,\vb}^{1/2}$ for a given $\yb\in\dom{\varphi}$.
Under Assumption~A.\ref{as:A2}, any primal-dual solution $(\xb^{\star}, \sb^{\star}) \in \Sc^{\star}$ and $\yb^{\star}\in\R^p$ of \eqref{eq:constr_cvx2} is also the KKT point of \eqref{eq:constr_cvx2} and vice versa, i.e.:
\begin{equation}\label{eq:kkt_cond}
\Shu{0\in L^{*}\yb^{\star} - \cb + \Nc_{\Kc}(\xb^{\star})}, ~~~  \yb^{\star} \in \partial{g}(\sb^{\star}), ~~~\text{and}~~ L\xb^{\star} - \sb^{\star} = \bb.
\end{equation}
In practice, we cannot solve \eqref{eq:constr_cvx2}-\eqref{eq:dual_prob1} (or equivalently, \eqref{eq:kkt_cond}) exactly to obtain an optimal solution $(\xb^{\star}, \sb^{\star}) \in\Sc^{\star}$ and $\yb^{\star} \in\R^p$ as indicated by the KKT condition \eqref{eq:kkt_cond}.
We can only find an $\varepsilon$-approximate solution $(\xb^{\star}_{\varepsilon}, \sb^{\star}_{\varepsilon})$ and $\yb^{\star}_{\varepsilon}$ as defined in Definition \ref{de:approx_sol2}.

%% Definition 3.
\begin{definition}\label{de:approx_sol2}
\textit{
Given a tolerance $\varepsilon > 0$, we say that $(\xb^{\star}_{\varepsilon}, \sb^{\star}_{\varepsilon})$ is an $\varepsilon$-solution for \eqref{eq:constr_cvx2} associated with a dual solution $\yb^{\star}_{\varepsilon} \in \R^p$ of \eqref{eq:dual_prob1} if $\xb^{\star}_{\varepsilon} \in\intx{\Kc}$ and
\begin{equation*}
\left\{\begin{array}{ll}
\yb^{\star}_{\varepsilon} &  \in  \partial{g}(\sb^{\star}_{\varepsilon} ), \vspace{0.5ex}\\
\Vert L^{\ast}\yb^{\star}_{\varepsilon} - \cb\Vert_{\xb^{\star}_{\varepsilon}}^{\ast} &\leq \varepsilon,\vspace{0.5ex}\\
\Vert L\xb^{\star}_{\varepsilon} - \sb^{\star}_{\varepsilon}  - \bb\Vert_{\yb_{\varepsilon}}^{\ast} & \leq \varepsilon.
\end{array}\right.
\end{equation*}
}
\end{definition}
We note that  our path-following method always generates $\xb^{\star}_{\varepsilon} \in \intx{\Kc}$ which implies  $\xb^{\star}_{\varepsilon} \in \Kc$.

Next, we specify Algorithm \ref{alg:A1} to solve the dual problem \eqref{eq:dual_prob1} and provide a recovery strategy to obtain an $\varepsilon$-solution of \eqref{eq:constr_cvx2}.

%% 2.1. Quadratic surrogate.
\beforesubsubsec
\subsubsection{The inexact  path-following proximal Newton scheme for the dual}
\aftersubsubsec
By Lemma \ref{le:pros_of_varphi}, the function $\varphi$ defined by \eqref{eq:smooth_dual_term} is also self-concordant, and its gradient and Hessian-vector product  are given explicitly as
\begin{equation}\label{eq:deriv_of_varphi}
\nabla{\varphi}(\yb) = -L\nabla{f^{*}}(\cb - L^{*}\yb)~~~\text{and}~~~\nabla^2\varphi(\yb)\db = L\nabla^2{f^{*}}(\cb - L^{*}\yb)L^{*}\db.
\end{equation}
Let us denote by $Q_{\varphi}$ the quadratic surrogate of $\varphi$ at $\yb^k$ defined by \eqref{eq:cvx_subprob2b1}.
Under Assumption A.\ref{as:A2}, $\nabla^2{\varphi}$ is positive definite and hence, $Q_{\varphi}(\cdot; \yb^k)$ is strongly convex.
The main step of our inexact dual path-following proximal  Newton method can be presented as follows:
\begin{equation}\label{eq:dual_ipf_scheme}
\left\{\begin{array}{ll}
t_{k+1} &:= (1-\sigma_{\beta})t_k,\vspace{1ex}\\
\yb^{k+1} &\approx \bar{\yb}^{k\!+\!1} := \mathrm{arg}\!\!\displaystyle\min_{\yb\in\R^p}\set{ \phi(\yb; \yb^k) := Q_{\varphi}(\yb; \yb^k)  + t_{k\!+\!1}^{-1}\psi(\yb)},
\end{array}\right.
\end{equation}
where the approximation $\approx$ is defined as in Definition \ref{de:approx_sol} with a given tolerance $\delta_k$, and $\sigma_{\beta}\in(0, 1)$ is \Tianxiao{a }given factor.

The second line of \eqref{eq:dual_ipf_scheme} is  a composite convex quadratic minimization problem, which
has the same form as \eqref{eq:primal_cvx_subprob}.
To analyze \eqref{eq:dual_ipf_scheme}, we define
\begin{equation}\label{eq:PN_decrements}
\lambda_t(\yb)  :=  \Vert \yb - \Pc_{\yb}(\yb - \nabla^2{\varphi}(\yb)^{-1}\nabla{\varphi}(\yb); t) \Vert_{\yb},
\end{equation}
where $\Pc_{\yb}(\cdot; t) = \prox_{t^{-1}\nabla^2{\varphi}(\yb)^{-1}\psi}(\cdot)$ defined by \eqref{eq:Pc_oper}.

%% 2.3. Finding a starting point.
\beforesubsubsec
\subsubsection{Finding a starting point via an auxiliary problem}
\aftersubsubsec
Let us fix $t_0 > 0$ (e.g., $t_0 := \kappa$), and choose $\beta\in (0,1)$ such that $\Omega_{t_0}(\beta)$ defined in \eqref{eq:quad_converg_region} is a central path neighborhood of \eqref{eq:PN_decrements}.
The aim is to find a starting point $\yb^0 \in \Omega_{t_0}(\beta)$.
We again apply \eqref{eq:dual_ipf_scheme} to solve an auxiliary problem of \eqref{eq:smooth_dual_prob} for finding $\yb^0\in\Omega_{t_0}(\beta)$.

Given an arbitrary $\hat{\yb}^0\in \intx{\Dc_{\Yc}}$, let $\hat{\xi}_0\in\partial{\psi}(\hat{\yb}^0)$ be an arbitrary subgradient of $\psi$ at $\hat{\yb}^0$, and $\hat{\zeta}^0 := \nabla{\varphi}(\hat{\yb}_0) + t_0^{-1}\hat{\xi}_0$.
We consider the following auxiliary convex problem:
\begin{equation}\label{eq:auxi_prob}
\min_{\yb\in\R^p}\Big\{ \widehat{\phi}_{\tau}(\yb) := \varphi(\yb) - \tau \iprods{\hat{\zeta}^0, \yb} ~+~ t_0^{-1}\psi(\yb) \Big\},
\end{equation}
where $\tau \in [0, 1]$ is a given continuation parameter.

As seen before, when $\tau = 0$, \eqref{eq:auxi_prob} becomes \eqref{eq:smooth_dual_prob} at $t := t_0$,
\Shu{while with $\tau = 1$} we have $\nabla{\varphi}(\hat{\yb}^0) - \hat{\zeta}_0 = \nabla{\varphi}(\hat{\yb}^0) - \nabla{\varphi}(\hat{\yb}^0) - t_0^{-1}\hat{\xi}_0 =  -t_0^{-1}\hat{\xi}_0 \in - t_0^{-1}\partial{\psi}(\hat{\yb}^0)$.
Hence, $0 \in \nabla{\varphi}(\hat{\yb}^0) -  \hat{\zeta}_0  +  t_0^{-1}\partial{\psi}(\hat{\yb}^0)$, which implies that $\hat{\yb}^0$ is a solution of \eqref{eq:auxi_prob} at $\tau = 1$.

We customize \eqref{eq:dual_ipf_scheme} to solve \eqref{eq:auxi_prob} by tracking the path $\set{\tau_j}$ starting from $\tau_0 := 1$ such that $\set{\tau_j}$ converges to zero.
We use the index $j$ instead of $k$ to distinguish with Phase 2.

Given $\hat{\yb}^j\in\intx{\Dc_{\Yc}}$ and $\tau_j > 0$, similar to \eqref{eq:dual_ipf_scheme}, we update
\begin{equation}\label{eq:proximal_FP_scheme0}
{\!\!\!\!\!\!\!}\left\{\begin{array}{ll}
\tau_{j+1} &:= \tau_j - \Delta_j \vspace{1ex}\\
\hat{\yb}^{j+1} &\approx \bar{\hat{\yb}}^{j+1} := \mathrm{arg}\!\!\displaystyle\min_{\yb\in\R^p}\set{ \hat{\phi}(\yb; \hat{\yb}^j) := Q_{\varphi}(\yb; \hat{\yb}^j) - \tau_{j+1}\iprods{\hat{\zeta}^0, \yb} + t_0^{-1}\psi(\yb)},
\end{array}\right.{\!\!\!}
\end{equation}
where $\hat{\sigma}_{\eta}\in (0, 1)$, and $\approx$ is in the sense of Definition \ref{de:approx_sol} with a given tolerance $\delta_j$.

Since \eqref{eq:proximal_FP_scheme0} has the same form as \eqref{eq:dual_ipf_scheme} \Shu{applied} to \eqref{eq:auxi_prob}, we define
\begin{equation}\label{eq:PN_decrementj}
\hat{\lambda}_j  :=  \big\Vert\hat{\yb}_j -  \prox_{t_0^{-1}\nabla^2{\varphi}(\hat{\yb}^j)^{-1}\psi}\left(\hat{\yb}_j - \nabla^2{\varphi}(\hat{\yb}_j)^{-1}\nabla{\widehat{\varphi}}(\hat{\yb}_j)\right) \big\Vert_{\hat{\yb}_j},
\end{equation}
as  the dual proximal Newton decrement for \eqref{eq:proximal_FP_scheme0}.

%%% 3.1. The primal recovery.
\beforesubsubsec
\subsubsection{Primal solution recovery and the worst-case complexity}
\aftersubsubsec
Our next step is  to recover an approximate primal solution $(\xb^{\star}_{\varepsilon}, \sb_{\varepsilon}^{\star})$ of the primal problem \eqref{eq:constr_cvx} from the dual one $\yb^{\star}_{\varepsilon}$ of \eqref{eq:dual_prob1}.
The following theorem provides such a strategy whose proof can be found in Appendix \ref{apdx:th:primal_recovery}.
The notation $ \pi_{\partial{g^{\ast}}(\yb^{k})}(L\xb^{k} - \bb)$ stands for the projection of $L\xb^{k} - \bb$ onto $\partial{g^{\ast}}(\yb^{k})$ which is a nonempty, closed and convex set.

%%% Theorem 5.1.
\begin{theorem}\label{th:primal_recovery}
Let $\set{(\yb^{k}, t_k)}$ be the sequence generated by \eqref{eq:dual_ipf_scheme}-\eqref{eq:proximal_FP_scheme0} to approximate a solution of the dual problem \eqref{eq:dual_prob1}.
Then, the point $(\xb^{k}, \sb^{k})$ computed by
\begin{equation}\label{eq:primal_sol}
\xb^{k} := \nabla{f^{*}}\big( t_{k}^{-1}(\cb - L^{*}\yb^{k} \big) \in \intx{\Kc}~~\text{and}~~\sb^{k} \Shu{= \pi_{\partial{g^{\ast}}(\yb^{k})}(L(\xb^k) - \bb)},
\end{equation}
together with $\yb^{k}$ satisfy the following estimate
\begin{equation}\label{eq:recovery}
\left\{\begin{array}{ll}
\yb^k & \in \partial{g}(\sb^k), \vspace{0.5ex}\\
\Vert L^{*}\yb^{k} - \cb\Vert_{\Shu{\xb^{k}}}^{*} &\leq \Shu{\sqrt{\nu}} t_{k}, \vspace{0.5ex}\\
\Vert L\xb^{k} - \sb^{k} - \bb\Vert_{\yb^k}^{\ast} &\leq \theta(c,\beta)t_{k},
\end{array}\right.
\end{equation}
where
\begin{equation}\label{eq:theta_c_beta}
\begin{array}{ll}
\theta(c,\beta) &:= \frac{(1-c^2)\beta}{(1+c\sqrt{\beta})^2\left[3c\sqrt{\beta} + c^2\beta + (1+c\sqrt{\beta})^3\right]-(1-c^2)\beta}
\vspace{1ex}\\
&+ \left(\frac{(1-c^2)\beta + c\sqrt{\beta}(1+c\sqrt{\beta})^2\left[3c\sqrt{\beta} + c^2\beta + (1+c\sqrt{\beta})^3\right]}{(1+c\sqrt{\beta})^2\left[3c\sqrt{\beta} + c^2\beta + (1+c\sqrt{\beta})^3\right]-(1-c^2)\beta}\right)^2 \leq 1,
\end{array}
\end{equation}
is a constant for fixed $c$ and $\beta$ chosen as in Lemma \ref{le:update_penalty_param}.

Consequently, if $\max\set{\sqrt{\nu}, \theta(c,\beta)} t_k = \sqrt{\nu} t_k \leq \varepsilon$, then $(\xb^{k}, \sb^{k})$ is an $\varepsilon$-solution to \eqref{eq:constr_cvx} in the sense of Definition \ref{de:approx_sol2} associated with the $\varepsilon$-dual solution $\yb^{k}$ of \eqref{eq:dual_prob1}.
\end{theorem}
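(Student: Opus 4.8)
The plan is to establish the three relations in \eqref{eq:recovery} one by one and then read off the $\varepsilon$-solution claim from Definition~\ref{de:approx_sol2}. I first record the conjugacy and logarithmic-homogeneity identities linking $\xb^k$ and $\yb^k$. Since $f$ is a $\nu$-self-concordant logarithmically homogeneous barrier of $\Kc$, its conjugate $f^{*}$ is a $\nu$-self-concordant logarithmically homogeneous barrier of $-\Kc^{*}$, so $\dom{f^{*}}=\intx{-\Kc^{*}}$ and $\nabla{f^{*}}$ is the inverse of $\nabla{f}$. Because $\yb^k\in\intx{\Dc_{\Yc}}$ we have $\cb-L^{*}\yb^k\in\intx{-\Kc^{*}}$, hence $\xb^k:=\nabla{f^{*}}\big(t_k^{-1}(\cb-L^{*}\yb^k)\big)\in\intx{\Kc}$ and $t_k^{-1}(\cb-L^{*}\yb^k)=\nabla{f}(\xb^k)$, i.e. $L^{*}\yb^k-\cb=-t_k\nabla{f}(\xb^k)$. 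Combining $\nabla{f^{*}}(\tau\ub)=\tau^{-1}\nabla{f^{*}}(\ub)$ with \eqref{eq:deriv_of_varphi} gives $\nabla{\varphi}(\yb^k)=-L\nabla{f^{*}}(\cb-L^{*}\yb^k)=-t_k^{-1}L\xb^k$, i.e. $L\xb^k=-t_k\nabla{\varphi}(\yb^k)$. These three identities are used throughout.

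The first line of \eqref{eq:recovery} is immediate: $\sb^k\in\partial{g^{*}}(\yb^k)$ by construction, and this is equivalent to $\yb^k\in\partial{g}(\sb^k)$. For the second line I use the logarithmically homogeneous barrier identities $\nabla^2{f}(\xb)\xb=-\nabla{f}(\xb)$ and $\iprods{\nabla{f}(\xb),\xb}=-\nu$, which give $\big(\Vert\nabla{f}(\xb^k)\Vert_{\xb^k}^{*}\big)^2=\iprods{\nabla^2{f}(\xb^k)^{-1}\nabla{f}(\xb^k),\nabla{f}(\xb^k)}=-\iprods{\xb^k,\nabla{f}(\xb^k)}=\nu$; together with $L^{*}\yb^k-\cb=-t_k\nabla{f}(\xb^k)$ this yields $\Vert L^{*}\yb^k-\cb\Vert_{\xb^k}^{*}=\sqrt{\nu}\,t_k$, in particular $\le\sqrt{\nu}\,t_k$.

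The third line is the crux. By \eqref{eq:dual_ipf_scheme}, $\yb^k$ is a $\delta_{k-1}$-approximation in the sense of Definition~\ref{de:approx_sol} of $\bar{\yb}^k:=\Pc_{\yb^{k-1}}\big(\yb^{k-1}-\nabla^2{\varphi}(\yb^{k-1})^{-1}\nabla{\varphi}(\yb^{k-1});t_k\big)$; unfolding the inclusion $0\in\widehat{\Ac}_{t_k}(\,\cdot\,;\yb^{k-1})$ produces $\eb^k$ with $\Vert\eb^k\Vert_{\yb^{k-1}}^{*}\le t_k\delta_{k-1}$ and $\xi^k:=\eb^k-t_k\big[\nabla{\varphi}(\yb^{k-1})+\nabla^2{\varphi}(\yb^{k-1})(\yb^k-\yb^{k-1})\big]\in\partial{\psi}(\yb^k)$. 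Using $L\xb^k=-t_k\nabla{\varphi}(\yb^k)$,
\[
L\xb^k-\xi^k=-t_k\big[\nabla{\varphi}(\yb^k)-\nabla{\varphi}(\yb^{k-1})-\nabla^2{\varphi}(\yb^{k-1})(\yb^k-\yb^{k-1})\big]-\eb^k .
\]
I bound the bracket by the standard self-concordance estimate $\Vert\nabla{\varphi}(\yb^k)-\nabla{\varphi}(\yb^{k-1})-\nabla^2{\varphi}(\yb^{k-1})(\yb^k-\yb^{k-1})\Vert_{\yb^{k-1}}^{*}\le r^2/(1-r)$ and convert to $\Vert\cdot\Vert_{\yb^k}^{*}$ via $\nabla^2{\varphi}(\yb^k)\succeq(1-r)^2\nabla^2{\varphi}(\yb^{k-1})$ (see, e.g., \cite{Nesterov2004}), where $r:=\Vert\yb^k-\yb^{k-1}\Vert_{\yb^{k-1}}$. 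Since $\Vert\bar{\yb}^k-\yb^{k-1}\Vert_{\yb^{k-1}}=\lambda_{t_k}(\yb^{k-1})\le\frac{c\sqrt{\beta}}{1+c\sqrt{\beta}}$ by Lemma~\ref{le:update_penalty_param} (applied to the step $\yb^{k-1}\mapsto\yb^k$) and $\Vert\yb^k-\bar{\yb}^k\Vert_{\yb^{k-1}}\le\delta_{k-1}\le\bar{\delta}_t(\beta)$ from \eqref{eq:choice_of_sigma}, one gets $r\le\frac{c\sqrt{\beta}}{1+c\sqrt{\beta}}+\bar{\delta}_t(\beta)<1$ and, by monotonicity of the right-hand side in $r$ and in $\delta_{k-1}$,
\[
\Vert L\xb^k-\xi^k\Vert_{\yb^k}^{*}\le t_k\Big(\frac{r^2}{(1-r)^2}+\frac{\delta_{k-1}}{1-r}\Big)\le\theta(c,\beta)\,t_k ,
\]
where the final bound is exactly \eqref{eq:theta_c_beta} after substituting $r\le\frac{c\sqrt\beta}{1+c\sqrt\beta}+\bar{\delta}_t(\beta)$ and $\delta_{k-1}\le\bar{\delta}_t(\beta)$. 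Because $\sb^k$ is the projection of $L\xb^k-\bb$ onto $\partial{g^{*}}(\yb^k)$ in the $\nabla^2{\varphi}(\yb^k)$-weighted norm, $\sb^k+\bb$ is the $\Vert\cdot\Vert_{\yb^k}^{*}$-nearest point of $\partial{\psi}(\yb^k)=\partial{g^{*}}(\yb^k)+\bb$ to $L\xb^k$, and since $\xi^k\in\partial{\psi}(\yb^k)$ we conclude $\Vert L\xb^k-\sb^k-\bb\Vert_{\yb^k}^{*}\le\Vert L\xb^k-\xi^k\Vert_{\yb^k}^{*}\le\theta(c,\beta)\,t_k$. The base case $k=0$ is handled identically using the Phase~1 iteration \eqref{eq:proximal_FP_scheme0}, the extra term $\tau_j\hat{\zeta}^0$ being absorbed by the termination test of Phase~1; alternatively one inserts one exact Newton step at $t_0$. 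Finally $\theta(c,\beta)\le1\le\sqrt{\nu}$, so $\sqrt{\nu}\,t_k\le\varepsilon$ forces all three conditions of Definition~\ref{de:approx_sol2} together with $\xb^k\in\intx{\Kc}$, which is the claim.

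The step I expect to be the main obstacle is the third line: making the self-concordance remainder bound and the change of local norm between $\yb^{k-1}$ and $\yb^k$ precise enough that the constant collapses exactly to $\theta(c,\beta)$ in \eqref{eq:theta_c_beta}, and checking carefully that the projection defining $\sb^k$ is measured in the same $\nabla^2{\varphi}(\yb^k)$-weighted norm in which the estimate is stated — otherwise a spurious condition-number factor of $\nabla^2{\varphi}(\yb^k)$ would appear. The identities of the first step and the two easy lines of \eqref{eq:recovery} are routine.
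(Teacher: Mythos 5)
Your proof is correct and follows essentially the same route as the paper's: the conjugacy identity $L^{*}\yb^k - \cb = -t_k\nabla f(\xb^k)$ plus $\Vert\nabla f\Vert^{*}=\sqrt{\nu}$ for the second line, and the self-concordance remainder bound $r^2/(1-r)^2$ combined with the error term $\delta_{k-1}/(1-r)$ (then $r\le \frac{c\sqrt\beta}{1+c\sqrt\beta}+\bar\delta_t(\beta)$, $\delta_{k-1}\le\bar\delta_t(\beta)$) for the third, which after algebra reproduces $\theta(c,\beta)$ exactly as in the paper. The only slip is cosmetic: you say the projection defining $\sb^k$ is taken in the ``$\nabla^2\varphi(\yb^k)$-weighted norm,'' but since you (correctly) measure the resulting distance in $\Vert\cdot\Vert^{*}_{\yb^k}$, the projection must be in the $\nabla^2\varphi(\yb^k)^{-1}$-weighted norm, i.e.\ the dual local norm.
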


%% 2.4. The complete algorithm.
\beforesubsubsec
\subsubsection{Two-phase inexact dual path-following proximal Newton  algorithm}
\aftersubsubsec
Now, we specify Algorithm~\ref{alg:A1} to solve \eqref{eq:constr_cvx2} using \eqref{eq:proximal_FP_scheme0} and \eqref{eq:dual_ipf_scheme} as in Algorithm \ref{alg:A1c}.

%%%%%%%%%%%%%%%%%%%%%%%%%%%%%%%%%%%%%%%%%%
%%% + Algorithm 1.
%%%%%%%%%%%%%%%%%%%%%%%%%%%%%%%%%%%%%%%%%%
\begin{algorithm}[ht!]\caption{(\textit{Two-phase inexact dual path-following proximal Newton algorithm})}\label{alg:A1c}
\begin{normalsize}
\begin{algorithmic}[1]
   \STATE {\bfseries Initialization:}
   \STATE  \hspace{0.16cm} Choose $\hat{\yb}^0\in\R^p$ such that $\Lc^{\ast}\hat{\yb}^0 - \cb \in\intx{\Kc^{\ast}}$. Fix $t_0 :=\kappa$, and an accuracy $\varepsilon > 0$.
   \STATE  \hspace{0.16cm} Compute a vector $\hat{\xi}_0\in\partial{\psi}(\hat{\yb}^0)$ and evaluate $\nabla{\varphi}(\hat{\yb}^0)$.
   \STATE  \hspace{0.16cm} Set $\hat{\zeta}^0 := \nabla{\varphi}(\hat{\yb}^0) + t_0^{-1}\hat{\xi}^0$ and $\tau_0 := 1$.
   \STATE  \hspace{0.16cm} Choose $\beta$, $\eta$, then compute $\bar{\delta}_{\tau}$, $\bar{\mu}_{\eta}$, $\bar{\delta}_t$, $\bar{\sigma}_{\beta}$ as in Algorithm \ref{alg:A1}.
   Compute $\theta(c,\beta)$ by \eqref{eq:theta_c_beta}.
   \vspace{-0.5ex}
   \rule{0.88\textwidth}{0.1mm}
    \STATE \begin{center}\textbf{Phase 1: Computing an initial point}\end{center}
   \vspace{-1ex}
   \rule{0.88\textwidth}{0.1mm}
   \STATE {\bfseries For} $j = 0, \cdots, j_{\max}$, \textbf{perform:}
   \STATE\label{step:10}\hspace{0.16cm} If $\tau_j\Vert\hat{\zeta}^0\Vert_{\hat{\yb}^j}^{\ast} \leq (\beta-\eta)$, then TERMINATE.
   \STATE\label{step:main1d} \hspace{0.16cm}  Perform \eqref{eq:proximal_FP_scheme0} with $\Delta_j := \frac{\bar{\mu}_{\eta}}{\Vert\hat{\zeta}^0\Vert_{\hat{\yb}^j}^{\ast}}$ up to an accuracy $\hat{\delta}_j \leq \bar{\delta}_{\tau}$.
   \STATE {\bfseries End for}
    \\\vspace{-0.5ex}
    \rule{0.88\textwidth}{0.1mm}
    \STATE \begin{center}\textbf{Phase 2: Inexact dual  path-following proximal Newton iterations}\end{center}
   \vspace{-1ex}
   \rule{0.88\textwidth}{0.1mm}
   \STATE {\bfseries For} $k = 0, \cdots, k_{\max}$, \textbf{perform:}
   \STATE \hspace{0.16cm} If $\sqrt{\nu} t_k \leq \varepsilon$, then TERMINATE.
   \STATE\label{step:main2d} \hspace{0.16cm} Perform \eqref{eq:dual_ipf_scheme} up to an accuracy $\delta_k\leq\bar{\delta}_t$.
   \STATE {\bfseries End for}
\STATE\label{step:primal_recovery}\textbf{Primal recovery:}  Recover $(\xb^{k}, \sb^{k})$ from $\yb^{k}$ as in \eqref{eq:primal_sol}. Then, return $(\xb^k, \sb^k, \yb^k)$.
\end{algorithmic}
\end{normalsize}
\end{algorithm}
%%% End of the algorithm.
%%%%%%%%%%%%%%%%%%%%%%%%%%%%%%%%%%%%%%%%%%

The main complexity-per-iteration of Algorithm \ref{alg:A1c} lies at Steps \ref{step:main1d} and \ref{step:main2d}, where we need to solve two composite and strongly convex quadratic programs in \eqref{eq:proximal_FP_scheme0} and \eqref{eq:dual_ipf_scheme}, respectively.
The primal solution recovery at Step~\ref{step:primal_recovery} does not significantly increase the computational cost of Algorithm \ref{alg:A1c}.
The worst-case iteration-complexity of Algorithm \ref{alg:A1c} remains the same as in Theorem \ref{th:overall_complexity} with $M_0 := \sqrt{\nu}$ and we do not restate it here.

%%%% 5. Numerical experiments
\beforesec
\section{Preliminarily numerical experiments}\label{sec:num_exp}
\aftersec
We present three numerical examples to illustrate three algorithmic variants described in the previous sections, respectively.
We compare our methods with three common interior-point solvers: SDPT3 \cite{Toh2010}, SeDuMi \cite{Sturm1999}, and Mosek (a commercial software package).
We also compare our methods in the first two examples with the SDPNAL+5.0 solver in \cite{yang2015sdpnalp} which implemented a majorized semi-smooth Newton-CG augmented Lagrangian method.
Our numerical experiments are carried out in Matlab R2014b environment, running on a MacBook Pro Laptop (Retina, 2.7GHz Intel Core i5, 16GM Memory).

%% 5.4. Minimizing the maximum eigenvalue.
\beforesubsec
\subsection{\bf Example 1: Minimizing the maximum eigenvalue with constraints}\label{subsec:exam1}
\aftersubsec
We illustrate Algorithm \ref{alg:A1}(a) via the well-known maximum eigenvalue problem \cite{Nesterov2007d}:
\begin{equation}\label{eq:max_eigen}
\lambda^{\star}_{\max} := \min_{\yb\in\Yc}\set{ \lambda_{\max}\left(C + L\yb\right)},
\end{equation}
where $\lambda_{\max}(U)$ is the maximum eigenvalue of a symmetric matrix $U\in\Sc^n$, $C\in\Sc^n$ is a given matrix,  $L$ is a linear operator from $\R^p\to\Sc^n$, and $\Yc$ is a nonempty, closed and convex set in $\R^p$ endowed with a self-concordant barrier $\varphi(\cdot)$.

As a consequence of Von Neumann's trace inequality, we can show that $\lambda_{\max}(U) = \max_{\xb}\set{ \xb^{\top}U\xb \mid \norm{\xb}_2 = 1} = \max\set{\trace{UX} \mid \trace{X} = 1, X\in\Sc^n_{+}}$.
Hence, if we define $\Xc := \Sc^n_{+}$ and $g(X) := \delta_{\set{ X \mid \trace{X} = 1}}(X) - \trace{CX}$ and using $\iprods{L\yb,X} = \trace{(L\yb)X}$, then we can rewrite \eqref{eq:max_eigen} as \eqref{eq:saddle_point_prob} which is of the form:
\begin{equation}\label{eq:max_eigen_saddle_point}
\tilde{\lambda}^{\star}_{\max} := \min_{\yb\in\Yc}\big\{ \max_{X\in\Sc^n_{+}}\set{ \iprods{L\yb, X} - g(X) } \big\}.
\end{equation}
The corresponding barrier for $\Sc_{+}^n$ is $f(X) := -\log\det(X)$.

Now, we can apply Algorithm \ref{alg:A1}(a) to solve \eqref{eq:max_eigen_saddle_point}.
The main computation of this algorithm is the solution of \eqref{eq:cvx_subprob2b} and \eqref{eq:cvx_subprob2b_x0}, which can be written explicitly as follows for \eqref{eq:max_eigen_saddle_point}:
\vspace{-0.5ex}
\begin{equation}\label{eq:exam3_subprob}
\min_{\yb\in\R^p} \Big\{ \max_{X\in\Sc^n}\set{\iprods{L(y), X} - tQ_f(X;X^k) - g(X)} + tQ_{\varphi}(y; y^k) \Big\},
\vspace{-0.5ex}
\end{equation}
where $Q_f(X;X^k) :=  \iprods{\nabla{f}(X^k), X - X^k}  + \frac{1}{2}\iprods{\nabla^2f(X^k)(X-X^k), X-X^k}$ and $Q_{\varphi}(\yb; \yb^k) := \iprods{\nabla{\varphi}(\yb^k), \yb - \yb^k} + \frac{1}{2}\iprods{\nabla^2\varphi(\yb^k)(\yb - \yb^k), \yb - \yb^k}$.
We can solve \eqref{eq:exam3_subprob} in a closed form as follows:
\vspace{-0.5ex}
\begin{equation}\label{eq:exam3_subprob_sol}
\left\{\begin{array}{ll}
X^{\ast}_k &:= \mat{\left(\frac{\mathrm{trace}(\mat{H_k^{-1}h_k}) + 1}{\mathrm{trace}(\mat{H_k^{-1}\vec{\mathbb{I}})}}\right)H_k^{-1}\vec{\mathbb{I}} - H_k^{-1}h_k},\vspace{1ex}\\
\yb^{\ast}_k &:= \yb^k - \nabla^2{\varphi}(\yb^k)^{-1}\left( \nabla{\varphi}(\yb^k) + t^{-1}L^{\ast}X^{*}_k\right),
\end{array}\right.
\vspace{-0.5ex}
\end{equation}
where $H_k := \nabla^2{f}(X^k) + t^{-2}L\nabla^2{\varphi}(y^k)^{-1}L^{\ast} \succ 0$ and $h_k := \left[\nabla{f}(X^k) - \nabla^2{f}(X^k)\vec{X^k}\right] - t^{-1}L\left(\yb^k - \nabla^2\varphi(\yb^k)^{-1}\nabla{\varphi}(\yb^k)\right)  - t^{-1}\vec{C}$.

We consider a simple case, where $\Yc := \set{\yb\in\R^p \mid \norm{\yb}_{\infty}  \leq 1}$. Then, the barrier function of $\Yc$ is simply $\varphi(\yb) := -\sum_{i=1}^p\log(1 - \yb_i^2)$.
In this case, we can compute both $\nabla{\varphi}(\cdot)$ and $\nabla^2{\varphi}(\cdot)^{-1}$ in a closed form.
The barrier parameter for $F := f + \varphi$ is $\nu := 2p + n$.

We test $5$ solvers: Algorithm~\ref{alg:A1}(a), SDPT3, SeDuMi, Mosek and SDPNAL+5.0 on 10 medium-size problems, where $n$ varies from $5$ to $50$ and $p = 10n^2$ (varies from $250$ to $25,000$).
The linear operator $L$ and matrix $C$ are generated randomly using the standard Gaussian distribution \texttt{randn} in Matlab, which are completely dense.
In Phase 1 of Algorithm~\ref{alg:A1}(a), instead of performing a path-following scheme on the auxiliary problem, we simply perform a damped step variant on the original problem.
We set the initial penalty parameter $t_0 := 0.1$.
We terminate our algorithm if $t_k \leq 10^{-6}$ and $\tilde{\lambda}_k \leq 10^{-8}$.
When $t_k \leq 10^{-6}$, if $\tilde{\lambda}_k$ does not reach the $10^{-8}$ accuracy, we fix $t_k = 10^{-6}$ and perform at most $15$ addional iterations to decrease $\tilde{\lambda}_k$.
We terminate SDPT3, SeDiMi and Mosek with the same accuracy $\sqrt{\varepsilon} = 1.49\times10^{-8}$, where $\varepsilon$ is Matlab's machine precision.
We terminate SDPNAL+ using its default setting, but set the maximum number of iterations at $1000$.

The result and performance of these solvers are reported in Table~\ref{tbl:minmax_eigen}, where $n\times p$ is the size of $L$, \texttt{iter} is the number of iterations in Phase 1 and Phase 2, and $\lambda^{\star}_{-}$ is the reported objective value of each solver.
The most intensive computation of Algorithm~\ref{alg:A1}(a) is $L\mathrm{diag}(\nabla^2{\varphi}(\yb^k))^{-1}L^{\top}$, which costs from $40\%$ to $80\%$ the overall computational time.

\begin{table}[!ht]
\newcommand{\cell}[1]{{\!\!\!}#1{\!\!}}
\newcommand{\cellb}[1]{{\!\!\!}\color{blue}#1{\!\!}}
\rowcolors{2}{white}{black!15!white}
\begin{center}
\vspace{-3ex}
\caption{Summary of the result and performance of $5$ solvers for solving problem \eqref{eq:max_eigen}. }\label{tbl:minmax_eigen}
\vspace{-1ex}
\begin{scriptsize}
\begin{tabular}{ rr  | rrr | rr | rr |  rr | rr}
\toprule
\multicolumn{2}{c|}{Problem} & \multicolumn{3}{c|}{ Algorithm \ref{alg:A1}(a) } & \multicolumn{2}{c|}{SDPT3} &  \multicolumn{2}{c|}{SeDuMi} &  \multicolumn{2}{c|}{Mosek}  &  \multicolumn{2}{c}{SDPNAL+}  \\  \midrule
\cell{$n$} & \cell{$p$} & \cell{iter} & \cell{time[s]} & $\lambda^{\star}_{\mathrm{ours}}$ &  \cell{time[s]} & $\lambda^{\star}_{\mathrm{sdpt3}}$ & \cell{time[s]} & \cell{$\lambda^{\star}_{\mathrm{sedumi}}}$ & \cell{time[s]} & \cell{$\lambda^{\star}_{\mathrm{mosek}}}$  & \cell{time[s]} & \cell{$\lambda^{\star}_{\mathrm{sdpnal}}}$ \\ \midrule
\cell{5} & \cell{250} & \cell{19/40} & \cellb{0.37} & \cell{-80.34} & \cell{2.49} & \cell{-80.34} & \cell{1.32} & \cell{-80.34} & \cell{3.12} & \cell{-80.34} & \cell{27.53} & \cell{-80.34} \\
\cell{10} & \cell{1000} & \cell{32/60} & \cellb{0.30} & \cell{-255.92} & \cell{3.66} & \cell{-255.92} & \cell{1.56} & \cell{-255.92} & \cell{2.06} & \cell{-255.92} & \cell{12.25} & \cell{-255.92} \\
\cell{15} & \cell{2250} & \cell{41/72} & \cellb{1.27} & \cell{-453.51} & \cell{13.35} & \cell{-453.52} & \cell{4.95} & \cell{-453.52} & \cell{2.53} & \cell{-453.52} & \cell{36.99} & \cell{-453.52} \\
\cell{20} & \cell{4000} & \cell{49/81} & \cell{6.88} & \cell{-684.18} & \cell{55.58} & \cell{-684.18} & \cell{21.67} & \cell{-684.18} & \cellb{4.14} & \cell{-684.18} & \cell{38.38} & \cell{-684.19} \\
\cell{25} & \cell{6250} & \cell{57/81} & \cell{23.06} & \cell{-952.12} & \cell{309.29} & \cell{-952.12} & \cell{135.91} & \cell{-952.12} & \cellb{7.91} & \cell{-952.12} & \cell{315.46} & \cell{-952.13} \\
\cell{30} & \cell{9000} & \cell{65/86} & \cell{155.37} & \cell{-1265.71} & \cell{518.82} & \cell{-1265.71} & \cell{209.27} & \cell{-1265.71} & \cellb{14.53} & \cell{-1265.71} & \cell{1202.07} & \cell{-1257.17} \\
\cell{35} & \cell{12250} & \cell{71/104} & \cell{181.21} & \cell{-1582.48} & \cell{1262.64} & \cell{-1582.49} & \cell{494.84} & \cell{-1582.49} & \cellb{30.40} & \cell{-1582.49} & \cell{2912.92} & \cell{-1582.21} \\
\cell{35} & \cell{12250} & \cell{71/104} & \cell{181.21} & \cell{-1582.48} & \cell{1262.64} & \cell{-1582.49} & \cell{494.84} & \cell{-1582.49} & \cellb{30.40} & \cell{-1582.49} & \cell{2912.92} & \cell{-1582.21} \\
\cell{40} & \cell{16000} & \cell{78/110} & \cell{400.89} & \cell{-1931.65} & \cell{2795.90} & \cell{-1931.66} & \cell{1064.91} & \cell{-1931.66} & \cellb{68.78} & \cell{-1931.66} & \cell{2487.30} & \cell{-1925.06} \\
\cell{45} & \cell{20250} & \cell{84/117} & \cell{831.43} & \cell{-2322.09} & \cell{4777.96} & \cell{-2322.12} & \cell{2052.40} & \cell{-2322.12} & \cellb{77.91} & \cell{-2322.11} & \cell{1840.10} & \cell{-2322.91} \\
\cell{50} & \cell{25000} & \cell{89/125} & \cell{1367.36} & \cell{-2694.27} & \cell{9474.14} & \cell{-2694.29} & \cell{4184.44} & \cell{-2694.29} & \cellb{130.61} & \cell{-2694.29} & \cell{13948.03} & \cell{-2696.67} \\
\bottomrule
\end{tabular}
\end{scriptsize}
\vspace{-7ex}
\end{center}
\end{table}

In this test, Mosek is the fastest when the size is increased while SDPNAL+~is the slowest. SDPT3 is slow but is slightly better than SDPNAL+ in this test.
Our algorithm produces nearly optimal objective value while requires reasonable computational time compared to the other solvers.
It slightly gives a lower objective value in some problems, but also violates the bound constraint $\Vert\yb\Vert_{\infty} \leq 1$.
We emphasize that our algorithm is naively implemented in Matlab without optimizing the code or using mex files as other solvers.
Mosek is a well-known commercial software implemented in C++ using several advanced heuristic strategies.

%%% Example 2: Sparse and low-rank matrix approximation.
\beforesubsec
\subsection{\bf Example 2: Sparse and low-rank matrix approximation}\label{subsec:exam2}
\aftersubsec
The problem of approximating a given $(n\times n)$-symmetric matrix $M$ as the sum of  a low-rank positive semidefinite matrix $X$ with bounded magnitudes, and a sparse matrix $M-X$ can be formulated into the following convex optimization problem (see \cite{TranDinh2013e}):
\begin{equation}\label{eq:rpca2}
\left\{\begin{array}{ll}
\displaystyle\min_{X} &  \rho\Vert\vec{X - M}\Vert_1 + (1-\rho)\trace{X} \vspace{1ex}\\
\mathrm{s.t.} & X\succeq 0, ~~L_{ij} \leq X_{ij}\leq U_{ij}, ~1\leq i < j \leq n.
\end{array}\right.
\end{equation}
Here, $\rho \in (0, 1)$ is a regularization parameter, and $L$ and $U$ are the lower and upper bounds.

Let us define $\Xc := \Sc^n_{++}$ and $g(X) = \rho\Vert\vec{X - M}\Vert_1 + (1-\rho)\trace{X} + \delta_{[L, U]}(X)$, where $\delta_{[L, U]}$ is the indicator of $[L, U] := \set{X\in\Sc^n\mid L_{ij} \leq X_{ij}\leq U_{ij}, ~1\leq i < j \leq n}$.
Then, we can formulate \eqref{eq:rpca2} into the form \eqref{eq:constr_cvx} with $f(X) := -\log\det(X)$.

We implement Algorithm~\ref{alg:A1}(b) for solving \eqref{eq:constr_cvx} and compare it with Mosek and SDPNAL+0.5 \cite{yang2015sdpnalp}.
The initial parameter is set to $t_0 := 0.1$.
We use a restarting accelerated proximal-gradient algorithm proposed in \cite{Su2014} to solve the subproblems \eqref{eq:primal_cvx_subprob} and \eqref{eq:primal_cvx_subprob0} with at most $150$ iterations.
We apply the same strategy as in Subsection~\ref{subsec:exam1} to terminate this algorithm.
For Mosek and SDPNAL+, we use their default configuration.

We test  three algorithms on $12$ problems with the size reported in Table~\ref{tbl:lowrank_app}. We limit our test to $n= 240$ since Mosek can only solve up to this size in our computer.
The data is generated as follows.
We generate a symmetric matrix $M_0$  using standard Gaussian distribution with the rank of $\lfloor 0.25n \rfloor$ and the sparsity of $25\%$.
Then, we add a sparse Gaussian noise $E$ with the sparsity of $10\%$ and the variance of $10^{-4}$ as $M := M_0 + E$ to obtain $M$.
We generate the lower bound $L_{ij} := 0.9\min\set{ M_{ij} \mid 1\leq i < j\leq n }$ and the upper bound $U_{ij} := 1.1\max\set{ M_{ij} \mid 1\leq i < j\leq n}$.
We choose $\rho = 0.2$ for all the problems.

The performance and result of three algorithms are reported in Table~\ref{tbl:lowrank_app}.
Here, \texttt{iter} is the number of iterations for Phase 1 and Phase 2 of Algorithm~\ref{alg:A1}(b);
\texttt{time} is the computational time in second;
$g(X^k)$ (resp., $g^{\star}_{\mathrm{sdpn}}$ and $g^{\star}_{\mathrm{mosek}}$) is the objective value of  Algorithm~\ref{alg:A1}(b)) (resp., SDPNAL+ and Mosek);
\texttt{spr/rank} is the sparsity level of $X^k - M$ (e.g., $\texttt{spr} := \texttt{nnz}(X^k-M)/n^2$) and the rank of $X^k$ (rounding up to $10^{-6}$ accuracy);
and $\texttt{error} := \tfrac{\Vert X^k - X^k_{\mathrm{mosek}}\Vert_F}{\max\set{\Vert X^k_{\mathrm{mosek}}\Vert_F, \Vert X^k\Vert_F}}$ is the relative error between the solution of  Algorithm~\ref{alg:A1}(b) and  SDPNAL+ to the solution of Mosek. Here, $\norm{\cdot}_F$ is the Frobenius norm.
\begin{table}[!ht]
\newcommand{\cell}[1]{{\!\!\!}#1{\!\!\!}}
\newcommand{\cells}[1]{{\!\!\!\!}#1{\!\!\!}}
\newcommand{\cella}[1]{{\!\!\!\!\!}#1{\!\!\!\!}}
\newcommand{\cellb}[1]{{\!\!}{\color{blue}#1}{\!\!\!}}
\rowcolors{2}{white}{black!15!white}
\begin{center}
\vspace{-4ex}
\caption{Summary of the result and performance of $3$ solvers on $12$ problem instances}\label{tbl:lowrank_app}
\vspace{-2ex}
\begin{scriptsize}
\begin{tabular}{ r  | rrrrr | rrrrr | rrr }
\toprule
\multicolumn{1}{c}{} & \multicolumn{5}{c|}{ Algorithm \ref{alg:A1}(b) } & \multicolumn{5}{c|}{SDPNAL+} &  \multicolumn{3}{c}{Mosek}  \\  \midrule
\cell{$n$} & \cell{iter} & \cell{time} & $g(X^k)$ & \cell{spr/rank} & \cell{error} & \cell{iter} & \cell{time} & \cell{\!\!$g^{\star}_{\mathrm{sdpnal+}}$} & \cell{spr/rank} & \cell{error}  & \cell{time} & \cell{$g^{\star}_{\mathrm{mosek}}}$ & \cell{spr/rank} \\ \midrule
\cell{20} & \cell{11/74} & \cellb{0.39} & \cell{11.37} & \cellb{0.39/13} & \cell{4.14e-01} & \cell{294} & \cell{0.91} & \cell{11.37} & \cell{0.73/18} & \cell{3.28e-01} & \cell{0.13} & \cell{11.37} & \cell{0.46/4} \\
\cell{40} & \cell{16/101} & \cellb{0.93} & \cell{57.06} & \cellb{0.36/32} & \cell{1.62e-03} & \cell{500} & \cell{4.54} & \cell{57.06} & \cell{0.82/39} & \cell{1.42e-04} & \cell{0.65} & \cell{57.06} & \cell{0.60/19} \\
\cell{60} & \cell{23/124} & \cell{2.16} & \cell{162.75} & \cellb{0.27/53} & \cell{2.30e-03} & \cell{350} & \cellb{1.93} & \cell{162.74} & \cell{0.96/59} & \cell{2.81e-03} & \cell{3.07} & \cell{162.75} & \cell{0.89/44} \\
\cell{80} & \cell{25/147} & \cell{2.36} & \cell{245.39} & \cellb{0.21/73} & \cell{1.19e-05} & \cell{210} & \cellb{1.62} & \cell{245.38} & \cell{0.88/79} & \cell{1.70e-05} & \cell{8.99} & \cell{245.39} & \cell{0.85/53} \\
\cell{100} & \cell{29/166} & \cell{3.24} & \cell{427.29} & \cellb{0.18/92} & \cell{9.91e-06} & \cell{216} & \cellb{1.71} & \cell{427.27} & \cell{0.90/100} & \cell{1.21e-05} & \cell{23.32} & \cell{427.29} & \cell{0.79/66} \\
\cell{120} & \cell{33/184} & \cell{3.71} & \cell{662.17} & \cellb{0.14/113} & \cell{1.08e-05} & \cell{211} & \cellb{2.66} & \cell{662.14} & \cell{0.89/119} & \cell{7.06e-06} & \cell{57.58} & \cell{662.17} & \cell{0.89/87} \\
\cell{140} & \cell{35/200} & \cell{4.45} & \cell{893.47} & \cellb{0.12/133} & \cell{9.89e-06} & \cell{191} & \cellb{2.74} & \cell{893.44} & \cell{0.90/139} & \cell{6.06e-06} & \cell{141.89} & \cell{893.47} & \cell{0.82/96} \\
\cell{160} & \cell{38/216} & \cell{5.98} & \cell{1185.71} & \cellb{0.10/154} & \cell{1.01e-05} & \cell{193} & \cellb{3.82} & \cell{1185.67} & \cell{0.90/159} & \cell{5.28e-06} & \cell{266.47} & \cell{1185.71} & \cell{0.80/127} \\
\cell{180} & \cell{41/244} & \cell{8.01} & \cell{1493.43} & \cellb{0.10/173} & \cell{1.09e-05} & \cell{191} & \cellb{3.89} & \cell{1493.38} & \cell{0.90/179} & \cell{4.12e-06} & \cell{542.29} & \cell{1493.43} & \cell{0.89/169} \\
\cell{200} & \cell{43/272} & \cell{10.97} & \cell{1741.32} & \cellb{0.08/196} & \cell{1.32e-05} & \cell{194} & \cellb{4.30} & \cell{1741.26} & \cell{0.90/199} & \cell{4.23e-06} & \cell{1049.39} & \cell{1741.33} & \cell{0.92/194} \\
\cell{220} & \cell{46/297} & \cell{11.52} & \cell{2082.55} & \cellb{0.06/214} & \cell{1.22e-05} & \cell{191} & \cellb{7.13} & \cell{2082.47} & \cell{0.90/218} & \cell{4.70e-06} & \cell{1374.15} & \cell{2082.55} & \cell{0.86/218} \\
\cell{240} & \cell{49/329} & \cell{15.31} & \cell{2577.35} & \cellb{0.04/236} & \cell{1.41e-05} & \cell{194} & \cellb{6.21} & \cell{2577.25} & \cell{0.90/237} & \cell{3.32e-06} & \cell{2644.77} & \cell{2577.36} & \cell{0.91/219} \\
\bottomrule
\end{tabular}
\end{scriptsize}
\vspace{-6ex}
\end{center}
\end{table}

As we can see from Table~\ref{tbl:lowrank_app}, three solvers give similar results in terms of the objective value and the approximate solution $X^k$ (shown by \texttt{error}).
But both Algorithm~\ref{alg:A1}(b) and Mosek perfectly satisfy the positive definiteness constraint with $\lambda_{\min}(X^k) > 0$, while SDPNAL+ still slightly violates this constraint with $\lambda_{\min}(X^k) \leq -\mathcal{O}(10^{-4})$.
 Algorithm~\ref{alg:A1}(b) is slightly slower than SDPNAL+ in terms of time, but it is much faster than Mosek.
Algorithm~\ref{alg:A1} gives best results in terms of the sparsity of $X^k - M$, while achieves similar rank as Mosek and SDPNAL+ in the majority of the test.

%%% 5.3. Cluster recovery.
\beforesubsec
\subsection{\bf Example 3: Cluster recovery}
\aftersubsec
Finally, we test Algorithm \ref{alg:A1c} for solving the following well studied clustering recovery problem via an SDP relaxation as studied in \cite{hajek2015achieving}:
\begin{equation}\label{eq:cluster_recovery}
\left\{\begin{array}{ll}
\displaystyle\max_{X\in\Sc^n_{+}} & \trace{A^{\top}X}\\
\text{s.t.} & X_{ii}\leq 1,~~ X_{ij} \geq 0, ~i,j=1,\cdots, n,\vspace{0.75ex}\\
& \trace{X} = s_1,~\trace{E_nX} = s_2,
\end{array}\right.
\end{equation}
where $A$ is the adjacency matrix of a given graph, $E_n$ is the all-one matrix in $\R^{n\times n}$, $s_1 = \sum_{i=1}^rK_i$, and $s_2 := \sum_{i=1}^rK_i^2$ with $K_1, K_2,\cdots, K_r$ being the size of $r$ clusters.

Let us define $\Kc := \Sc^n_{+}$, $LX := [\trace{X}, \trace{E_nX}, X_{ii}, X_{ij}] : \R^{n\times n} \to \R^{n(n+1)+2}$, $g(\sb) := \delta_{\set{0}^2}(\sb_{1:2}) + \delta_{-\R_{+}^n}(\sb_{3:n+2}) + \delta_{\R^{n^2}_{+}}(\sb_{n+3:n(n+1)+2})$, and  $\bb := (s_1, s_2, \eb_n, \boldsymbol{0}^{n^2})^{\top}\in \R^{n(n+1)+2}$, where $\eb_n$ is the all-one vector, $\delta_{\Xc}$ is the indicator function of $\Xc$, and $\sb_{k_1:k_2}$ is the subvector of $\sb$ concatenating  from the $k_1$-th entry to the $k_2$-entry.
Using these notations, we can reformulate \eqref{eq:cluster_recovery} into the constrained convex problem \eqref{eq:constr_cvx2}.

Since $\Kc := \Sc^n_{+}$ is a self-dual cone, i.e., $\Kc^{\ast} = \Sc^n_{+}$, and the corresponding self-concordant logarithmically homogeneous barrier of $-\Kc^{\ast}$ is  $f^{\ast}(S) := -\log\det(-S) - n$.

We implement Algorithm~\ref{alg:A1c} to solve \eqref{eq:cluster_recovery}.
We use a restarting proximal-gradient algorithm \cite{Beck2009} to solve the corresponding subproblems in \eqref{eq:dual_ipf_scheme} and \eqref{eq:proximal_FP_scheme0}.
Since we can compute the dual objective values, we use a damped step Newton scheme in Phase 1 to compute an initial point $\yb^0$.
Since our algorithm uses the first order method for solving the subproblems, we set the precision of those solvers above to be low such that the relative error is guaranteed to be less than $\mathcal{O}(10^{-4})$ when terminates.
We choose the number of clusters such that the average number of points of each cluster is between $10$ and $20$.
The initial value of $t_0$ is set to $t_0 := 0.5$ if $n \leq 120$, and to $t_0 := 1.0$, otherwise.
All optimal values of our algorithm have the relative error smaller than $10^{-4}$ when the algorithm is terminated which matches the precision of the above solvers.
The results and performance of these solvers are reported in Table~\ref{tbl:cluster} for small-scale problems.
In this table, we summarize the result and performance of $12$ problems of the size from $60$ to $250$, where $K$ is the number of clusters; \texttt{iter} is the number of iterations in Phase 1 and Phase 2 of Algorithm~\ref{alg:A1c}; and \texttt{spu} is the speed up ratio (i.e., $\texttt{spu} = \frac{\text{time}_{-}}{\text{time}_{\text{Algorithm~\ref{alg:A1c}}}}$) in terms of time compared to other solvers.

\begin{table}[ht!]
\vspace{-3ex}
\newcommand{\cell}[1]{{\!\!\!}#1{\!\!}}
\newcommand{\cellf}[1]{{\!\!\!}#1{\!\!\!}}
\newcommand{\cells}[1]{{\!\!\!\!}#1{\!\!\!}}
\newcommand{\cella}[1]{{\!\!\!\!\!}#1{\!\!\!\!}}
\newcommand{\cellb}[1]{{\!\!}{\color{blue}#1}{\!\!\!}}
\rowcolors{2}{white}{black!15!white}
\begin{center}
\caption{Summary of performance of $4$ solvers on $12$ problem instances}\label{tbl:cluster}
\vspace{-1ex}
\begin{scriptsize}
\begin{tabular}{ rrr| rrr | rrr | rrr |  rrr}
\toprule
\multicolumn{3}{c|}{Problem} & \multicolumn{3}{c|}{ Algorithm \ref{alg:A1c} } & \multicolumn{3}{c|}{SDPT3} &  \multicolumn{3}{c|}{SeDuMi} &  \multicolumn{3}{c}{Mosek}  \\  \midrule
\cell{$n$} & \cell{$K$} & \cell{$\rho[\%]$} & \cell{iter} & \cell{\!time[s]\!} & \cell{$\mathcal{G}_k$} &  \cell{time[s]} & \cell{$\mathcal{G}^{\star}_{\mathrm{sdpt3}}$} & \cell{spu} & \cell{time[s]} & \cell{$\mathcal{G}^{\star}_{\mathrm{sedumi}}}$ & \cell{spu} & \cell{time[s]} & \cell{$\mathcal{G}^{\star}_{\mathrm{mosek}}}$ & \cell{spu}\\
\cellf{60} & \cell{5} & \cell{18.6} & \cell{52/215} & \cellb{3.63} & \cell{660.07} & \cell{6.28} & \cell{660.00} & \cell{1.7} & \cell{11.57} & \cell{660.00} & \cell{3.2} & \cell{4.41} & \cell{660.00} & \cell{1.2} \\
\cellf{70} & \cell{7} & \cell{13.0} & \cell{42/232} & \cellb{3.37} & \cell{630.02} & \cell{9.27} & \cell{630.00} & \cell{2.8} & \cell{26.56} & \cell{630.04} & \cell{7.9} & \cell{6.62} & \cell{630.00} & \cell{2.0} \\
\cellf{80} & \cell{8} & \cell{11.3} & \cell{45/247} & \cellb{4.30} & \cell{720.02} & \cell{14.46} & \cell{720.00} & \cell{3.4} & \cell{71.58} & \cell{720.00} & \cell{16.7} & \cell{8.39} & \cell{720.00} & \cell{2.0} \\
\cellf{90} & \cell{9} & \cell{10.1} & \cell{48/262} & \cellb{6.61} & \cell{810.03} & \cell{27.80} & \cell{810.00} & \cell{4.2} & \cell{127.53} & \cell{810.01} & \cell{19.3} & \cell{14.40} & \cell{810.00} & \cell{2.2} \\
\cellf{100} & \cell{10} & \cell{9.1} & \cell{50/276} & \cellb{7.54} & \cell{900.03} & \cell{44.24} & \cell{900.00} & \cell{5.9} & \cell{237.90} & \cell{900.01} & \cell{31.5} & \cell{23.45} & \cell{900.00} & \cell{3.1} \\
\cellf{110} & \cell{10} & \cell{9.2} & \cell{57/289} & \cellb{8.91} & \cell{1100.05} & \cell{57.26} & \cell{1100.00} & \cell{6.4} & \cell{378.96} & \cell{1100.02} & \cell{42.5} & \cell{33.01} & \cell{1100.00} & \cell{3.7} \\
\cellf{120} & \cell{10} & \cell{9.2} & \cell{78/302} & \cellb{11.13} & \cell{1320.13} & \cell{88.61} & \cell{1320.00} & \cell{8.0} & \cell{839.62} & \cell{1320.02} & \cell{75.4} & \cell{50.80} & \cell{1320.00} & \cell{4.6} \\
\cellf{140} & \cell{10} & \cell{9.4} & \cell{57/359} & \cellb{20.18} & \cell{1820.13} & \cell{203.74} & \cell{1820.00} & \cell{10.1} & \cell{1347.69} & \cell{1820.35} & \cell{66.8} & \cell{110.82} & \cell{1820.00} & \cell{5.5} \\
\cellf{160} & \cell{10} & \cell{9.4} & \cell{76/383} & \cellb{26.57} & \cell{2400.23} & \cell{340.58} & \cell{2400.00} & \cell{12.8} & \cell{2899.46} & \cell{2400.03} & \cell{109.1} & \cell{214.13} & \cell{2400.00} & \cell{8.1} \\
\cellf{180} & \cell{15} & \cell{6.2} & \cell{63/406} & \cellb{36.73} & \cell{1980.16} & \cell{808.40} & \cell{1980.00} & \cell{22.0} & \cell{6859.91} & \cell{1980.07} & \cell{186.8} & \cell{423.28} & \cell{1980.00} & \cell{11.5} \\
\cellf{200} & \cell{20} & \cell{4.5} & \cell{64/428} & \cellb{66.34} & \cell{1800.14} & \cell{1945.14} & \cell{1798.69} & \cell{29.3} & \cell{14023.69} & \cell{1800.04} & \cell{211.4} & \cell{787.78} & \cell{1800.00} & \cell{11.9} \\
\cell{250} & \cell{25} & \cell{3.6} & \cell{72/478} & \cellb{184.18} & \cell{2250.21} & \cell{11378.33} & \cell{2250.00} & \cell{61.8} & \cell{53417.68} & \cell{2250.06} & \cell{290.0} & \cell{3415.16} & \cell{2250.00} & \cell{18.5} \\
\bottomrule
\end{tabular}
\end{scriptsize}
\end{center}
\vspace{-4ex}
\end{table}

Table~\ref{tbl:cluster} shows that Algorithm~\ref{alg:A1c} can achieve the same order of accuracy as other three solvers while outperforms them in terms of computational time.
We can speed up Algorithm~\ref{alg:A1c} up to $18$ times compared to Mosek, $62$ times over SDPT3 and $290$ times over SeDuMi.
This is due to the low cost computation of the projections when we work directly on the dual of the original problem.
The other solvers require to convert it to an appropriate SDP format which increases substantially the problem size as seen from Subsection~\ref{subsec:exam2}.

%% 1.2. Previous and related works.
\beforesec
\section{Discussion}\label{sec:discussion}
\aftersec
We have studied a class of self-concordant inclusions of the form \eqref{eq:mono_inclusion}, and designed an inexact generalized Newton-type framework for solving it.
Problem \eqref{eq:mono_inclusion} is sufficiently general to cope with three fundamental convex optimization formulations  indicated in Subsection \ref{subsec:motivating_exam}.
Moreover, since this problem can \Shu{be} reformulated  into a multivalued variational inequality problem \cite{Rockafellar1997}, theory and methods from this area can be used to \Shu{deal with  \eqref{eq:mono_inclusion}, see \cite{Bauschke2011,Facchinei2003,Rockafellar1997}.

Most existing methods for solving  \eqref{eq:mono_inclusion} exploit specific structures of $\Ac$ and $\Zc$.
When $\Ac$ is single-valued, \eqref{eq:mono_inclusion} is a standard single-valued variational inequality, and it becomes a complementarity problem if additionally $\Zc$ is a box.
The most commonly used methods to solve complementarity problems are based on generalized Newton methods developed for nonsmooth equations, including the path following methods \cite{Dirkse1995,Ralph1994} and semi-smooth Newton methods \cite{Sun2002,Deluca1996,Kummer1988,Qi1993,yang2015sdpnalp}.
The basic idea is to reformulate the complementarity problem as an equation defined by a nonsmooth function, and at each iteration, we approximately solve the equation obtained by some first order approximation or a generalized Jacobian matrix of the nonsmooth function. Another important class of methods to solve \eqref{eq:mono_inclusion} are based on projection and splitting   \cite{Tseng1991a,Tseng1997,xiu2003some}.
These methods can be considered as special cases of the forward-backward splitting scheme when the second operator is simply the normal cone of the convex set $\Zc$.
When $\Ac$ is maximally monotone, its resolvent is well-defined and single-valued. Splitting methods using  proximal-point and projected schemes such as Douglas-Rachford's methods can be applied to solve \eqref{eq:mono_inclusion}.
Other approaches such as augmented Lagrangian \cite{yang2015sdpnalp}, extragradient, mirror descent, hybrid-gradient, gap functions, smoothing techniques and interior-point proximal methods are also widely studied in the literature for different classes of \eqref{eq:mono_inclusion}, see, e.g., \cite{auslender1999logarithmic,Bonnans1994a,Esser2010a,Facchinei2003,Fukushima1992a,Goldstein2013,Korpelevic1976,Monteiro2012c,Nemirovskii2004,Nesterov2007a,solodov1999hybrid,Wright1996} and the references quoted therein.}

From a theoretical viewpoint, the setting \eqref{eq:mono_inclusion} can be used as a unified tool to handle a wide range of convex problems.
Three specific instances \eqref{eq:constr_cvx}, \eqref{eq:constr_cvx2} and \eqref{eq:saddle_point_prob} of \eqref{eq:mono_inclusion} are well studied and have a great impact in different fields including operations research, statistics, machine learning, signal and image processing and controls \cite{Bauschke2011,BenTal2001,Boyd2004,Nesterov2004}.
Methods for solving these instances include sequential quadratic programming \cite{Nocedal2006}, interior-points \cite{Nesterov1994}, augmented Lagrangian-type methods \cite{Wen2010a,yang2015sdpnalp} (e.g., implemented in SDPAD, SDPNAL/SDPNAL), first order/second order  primal-dual and splitting methods \cite{Bauschke2011,Boyd2011,Chambolle2011,Combettes2005,Eckstein1992,Shefi2014,TranDinh2012c,Tseng1991a}, Frank-Wolfe-type algorithms \cite{Frank1956,Jaggi2013}, and stochastic gradient descents \cite{Nemirovski2009,johnson2013accelerating}, just to name a few.

Perhaps, the interior-point method \cite{BenTal2001,Nesterov2004,Nesterov1997,Yamashita2012} is one of the most developed methods for solving standard conic programs covered by \eqref{eq:constr_cvx} and \eqref{eq:constr_cvx2}.
Interior-point methods together with disciplined programming approach \cite{Grant2006} allow us to solve a large class of convex optimization problems arising in different fields.
These techniques have been systematically implemented in several off-the-shelf software packages such as CVX \cite{Grant2006}, YALMIP \cite{Loefberg2004}, CPLEX and Gurobi for both commercial and academic use. While interior-point methods provide a powerful framework to solve a large class of constrained convex problems with high accuracy and numerically robust performance, their high complexity-per-iteration disadvantage prevents them from solving large-scale applications in modern applications.

Although the interior-point method and the proximal-type method have been separately well developed for several classes of convex problems, their joint treatment was first proposed in \cite{TranDinh2013e,TranDinh2015f} to the best of our knowledge.
In these works, the authors proposed a novel path-following proximal Newton framework for the instance \eqref{eq:constr_cvx} of \eqref{eq:mono_inclusion}.
They characterized the $\mathcal{O}(\sqrt{\nu}\log(1/\varepsilon)$-worst-case iteration-complexity as in standard path-following method \cite{Nesterov2004} to achieve an $\varepsilon$-solution of \eqref{eq:constr_cvx}, where $\nu$ is the barrier parameter of the barrier function of the feasible set $\Xc$, and $\varepsilon$ is the desired accuracy.
However, \cite{TranDinh2013e,TranDinh2015f} obtained a smaller neighborhood for the central path compared to standard path-following methods \cite{Nesterov2004}.
In addition, these algorithms used the points on the central path to measure the proximal Newton decrement which leads to a unimplementable stopping criterion.
In contrast,  this paper focuses on developing a unified theory using self-concordant inclusion \eqref{eq:mono_inclusion} as a generic framework.
The main component of our methods is the generalized Newton method studied, e.g., in \Shu{\cite{Bonnans1994a,Pang1991,Robinson1980,Robinson1994}}, but we have extended it to self-concordant settings.
Moreover, we  use a generalized gradient mapping to measure a neighborhood of the central path as well as a quadratic convergence region of the generalized Newton iterations, and this neighborhood has the same size as in standard path-following methods \cite{Nesterov2004}. When this framework is specified to solve \eqref{eq:constr_cvx} and \eqref{eq:constr_cvx2}, such a generalized gradient mapping  allows us to obtain an implementable stopping criterion, an adaptive rule for the penalty parameter, and the overall polynomial time worst-case iteration-complexity bounds.

%------------------------------------
\vspace{-2ex}
\begin{acknowledgements}
This work was supported in part by the NSF grant, no. DMS-16-2044 (USA).
\end{acknowledgements}

\beforesec
\section{Appendix: The proofs of technical results}\label{sec:appendix}
\aftersec
\vspace{1ex}
This appendix provides the full proofs of all lemmas and theorems in the main text.

%%% Proof of Modified Lemma  1.
\beforesubsec
\subsection{\bf The proof of Lemma \ref{le:existence_sol_m}: The existence and uniqueness of the solution of \eqref{eq:barrier_mono_inc}.}\label{apdx:le:existence_sol_m}
\aftersubsec
\ShuR{Under Assumption A.\ref{as:A0},  the operator $t\nabla{F}(\cdot) + \Ac(\cdot)$ is maximally monotone for any $t > 0$.
We use \cite[Theorem 12.51]{Rockafellar1997} to prove the solution existence of \eqref{eq:barrier_mono_inc}.

To this end, let $\boldsymbol{\omega}\ne \boldsymbol{0}$ be chosen from the horizon cone of $\intx \Zc\cap\dom{\Ac}$.
We need to find $\zb\in \intx \Zc \cap\dom{\Ac}$ with $\vb\in t \nabla{F}(\zb) + \Ac(\zb)$ such that $\iprods{\vb, \boldsymbol{\omega}} > 0$.
By assumption,  there exists $\hat{\zb}\in \intx{\Zc}\cap\dom{\Ac}$ with $\hat{\ab}\in \Ac(\hat{\zb})  $ such that $\iprods{\hat{\ab}, \boldsymbol{\omega}}>0$.

First, we show that $\zb_\tau = \hat{\zb}+ \tau \boldsymbol{\omega}$ belongs to $\intx{\Zc}\cap\dom{\Ac}$ for any $\tau>0$. To see this, note that the assumption $\intx{\Zc}\cap\dom{\Ac}\ne \emptyset$ implies that $\intx{\Zc}\cap \mathrm{ri} \ \dom{\Ac}\ne \emptyset$, which implies that the closure of $\intx{\Zc}\cap\dom{\Ac}$ is exactly $\Zc\cap\cl{\dom{\Ac}}$. Choose $\tau'>\tau$; by definition of the horizon cone, $\zb_{\tau'}$ belongs to the closure of $\intx{\Zc}\cap\dom{\Ac}$, so  $\zb_{\tau'}\in \Zc$ and $\zb_{\tau'}\in \cl{\dom{\Ac}}$. Since $\zb_{\tau}$ is a convex combination of $\hat{\zb}$ and  $\zb_{\tau'}$, it belongs to $\intx{\Zc} \cap \dom{\Ac}$, where we used the assumption that $\dom{\Ac}$ is either closed or open.

Next, for any $\ab_\tau \in \Ac(\zb_\tau)$, we have
\begin{equation*}
\iprods{\ab_\tau, \boldsymbol{\omega}} = \iprods{\ab_\tau - \hat{\ab}, \boldsymbol{\omega}} + \iprods{\hat{\ab}, \boldsymbol{\omega}}=\iprods{\ab_\tau - \hat{\ab}, \tau^{-1}(\zb_\tau-\hat{\zb})} + \iprods{\hat{\ab},\boldsymbol{\omega}} \ge \iprods{\hat{\ab},\boldsymbol{\omega}} > 0.
\end{equation*}
On the other hand, $\iprods{t \nabla{F}(\zb_\tau), \boldsymbol{\omega}} = \iprods{t \nabla{F}(\zb_\tau), \tau^{-1}(\zb_\tau-\hat{\zb})} \ge -\tau^{-1} t\nu$ by \cite[Theorem 4.2.4]{Nesterov2004}.
Combining the above two inequalities, we can see that
\begin{equation*}
\iprods{t \nabla{F}(\zb_\tau) + \ab_\tau, \boldsymbol{\omega}} \ge -\tau^{-1} t\nu + \iprods{\hat{\ab}, \boldsymbol{\omega}} >0
\end{equation*}
as long as $\tau^{-1}t\nu < \iprods{\hat{\ab},\boldsymbol{\omega}}$.
We have thereby verified the condition in \cite[Theorem 12.51]{Rockafellar1997} needed to guarantee \eqref{eq:barrier_mono_inc} to have a nonempty (and bounded) solution set.
Since $\nabla F$ is strictly monotone, the solution of \eqref{eq:barrier_mono_inc} is unique.

We note that $\zb^{\star}_t$ is the solution of \eqref{eq:barrier_mono_inc} and $\zb^{\star}_t \in\intx{\Zc}$, we have $-t\nabla{F}(\zb^{\star}_t) \in \Ac(\zb^{\star}_t) = \Ac_{\Zc}(\zb^{\star}_t)$. Hence, $\mathrm{dist}_{\zb^{\star}_t}(\boldsymbol{0}, \Ac_{\Zc}(\zb^{\star}_t)) \leq t\norm{\nabla{F}(\zb^{\star}_t)}_{\zb^{\star}_t}^{\ast} \leq t\sqrt{\nu}$ due to the property of $F$ \cite{Nesterov2004}.
Using Definition~\ref{de:MVIP_approx_sol}, we have the last conclusion.
\Eproof
}

%%% The proof of Lemma 5.3.
\beforesubsec
\subsection{\bf The proof of  Lemma~\ref{le:sol_measure}: Approximate solution}\label{apdx:le:sol_measure}
\aftersubsec
First, since $\bar{\zb}_{+}$ is a zero point of \Shu{$\widehat{\Ac}_t(\cdot;z)$}, i.e., \Shu{$0 \in\widehat{\Ac}_t(\bar{\zb}_{+},z)$}, we have  $-t\nabla{F}(\zb) - t\nabla^2{F}(\zb)(\bar{\zb}_{+} - \zb) \in \Ac(\bar{\zb}_{+})$.
Second, since $\zb_{+}$ is a $\delta$-solution to \eqref{eq:zero_k_set}, there exists $\eb$ such that  $\eb \in t\nabla{F}(\zb) + t\nabla^2{F}(\zb)(\zb_{+} - \zb)  + \Ac(\zb_{+})$ with $\Vert \eb\Vert_{\zb}^{\ast} \leq t\delta$
by Definition \ref{de:approx_sol}.
Combining these expressions and using the monotonicity of $\Ac$ in Definition \ref{de:monotone_opers}, we can show that $\iprods{t[\nabla{F}(\zb) + \nabla^2{F}(\zb)(\zb_{+} - \zb) - \nabla{F}(\zb) - \nabla^2{F}(\zb)(\bar{\zb}_{+} - \zb)] -\eb, \bar{\zb}_{+} - \zb_{+} } \geq 0$. This inequality leads to
\vspace{-0.75ex}
\begin{equation}\label{eq:lm31_est}
t\Vert\zb_{+} - \bar{\zb}_{+}\Vert_{\zb}^2 \leq \iprods{\eb, \zb_{+} - \bar{\zb}_{+}} \leq \Vert\eb\Vert_{\zb}^{\ast}\Vert\zb_{+} - \bar{\zb}_{+}\Vert_{\zb},
\vspace{-0.75ex}
\end{equation}
which implies $\Vert\zb_{+} - \bar{\zb}_{+}\Vert_{\zb} \leq t^{-1}\Vert\eb\Vert_{\zb}^{\ast}$.
Hence, $\Vert\eb\Vert_{\zb}^{\ast}\leq t\delta$ implies $\Vert\zb_{+} - \bar{\zb}_{+}\Vert_{\zb} \leq \delta$.

Next, since $\zb_{+}$ is a $\delta$-approximate solution to \eqref{eq:zero_k_set} at $t$ in the sense of Definition~\ref{de:approx_sol} up to the accuracy $\delta$, \Shu{there exists $\eb \in\R^p$ such that}
\vspace{-0.75ex}
\begin{equation*}
\eb \in t\left[\nabla{F}(\zb) + \nabla^2{F}(\zb)(\zb_{+} - \zb)\right] + \Ac(\zb_{+})~~\text{with}~~\norm{\eb}_{\zb}^{\ast} \leq t\delta.
\vspace{-0.75ex}
\end{equation*}
In addition, we have $\zb_{+} \in\intx{\Zc}$ due to Theorem~\ref{th:main_estimate1} below.
Hence, we have $\Ac_{\Zc}(\zb_{+}) = \Ac(\zb_{+})$.
Using this relation and the above inclusion, we can show that
\vspace{-0.75ex}
\begin{equation}\label{eq:lm4_est100}
\begin{array}{ll}
\mathrm{dist}_{\zb}(\boldsymbol{0}, \Ac_{\Zc}(\zb_{+})) &\leq \Vert \eb - t\left[\nabla{F}(\zb) + \nabla^2{F}(\zb)(\zb_{+} - \zb)\right] \Vert_{\Shu{\zb}}^{\ast} \vspace{0.75ex}\\
&\leq \norm{\eb}_{\zb}^{\ast} + t\norm{\nabla{F}(\zb)}_{\zb}^{\ast} + t\Vert \nabla^2{F}(\zb)(\zb_{+} - \zb)\Vert_{\zb}^{\ast} \vspace{0.75ex}\\
&\leq t \left[\delta + \sqrt{\nu}  + \Vert \nabla^2{F}(\zb)(\bar{\zb}_{+} - \zb)\Vert_{\zb}^{\ast}  + \Vert \nabla^2{F}(\zb)(\bar{\zb}_{+} - \zb_{+})\Vert_{\zb}^{\ast}\right]\vspace{0.75ex}\\
&\leq t \left[\delta + \sqrt{\nu}  + \lambda_{t}(\zb) + \norm{\zb_{+} - \bar{\zb}_{+}}_{\zb}\right]\vspace{0.75ex}\\
&\leq t\left( \sqrt{\nu} + \lambda_{t}(\zb) + 2\delta\right).
\end{array}
\vspace{-0.75ex}
\end{equation}
Here, we have used $\norm{\nabla{F}(\zb)}_{\zb}^{\ast} \leq \sqrt{\nu}$, and $\norm{\zb_{+} - \bar{\zb}_{+}}_{\zb} \leq \delta$ by the first part of this lemma.
We note that if $\lambda_t(\zb) + \delta < 1$, then $\mathrm{dist}_{\zb_{+}}(\boldsymbol{0}, \Ac_{\Zc}(\zb_{+})) \leq (1-\lambda_t(\zb) -\delta)^{-1}\mathrm{dist}_{\zb}(\boldsymbol{0}, \Ac_{\Zc}(\zb_{+}))$.
Combining this inequality and the last estimate, we obtain \eqref{eq:sol_measure}.
Finally, if we choose $t \leq (1-\lambda_t(\zb)-\delta)\left( \sqrt{\nu} + \lambda_{t}(\zb) + 2\delta\right)^{-1}\varepsilon$, then $\mathrm{dist}_{\zb_{+}}(\boldsymbol{0}, \Ac_{\Zc}(\zb_{+})) \leq \varepsilon$.
Hence, $\zb_{+}$ is an $\varepsilon$-solution to \eqref{eq:mono_inclusion} in the sense of Definition~\ref{de:MVIP_approx_sol}.
\Eproof

%%% BEGIN PROOF of Lemma
\beforesubsec
\subsection{\bf The proof of Theorem \ref{th:main_estimate1}: Key estimate of generalized Newton-type schemes}\label{apdx:th:main_estimate1}
\aftersubsec
First, similar to \cite{Bauschke2011}, we can easily show the the following non-expansive property holds
\vspace{-0.25ex}
\begin{equation}\label{eq:Pc_oper_property}
 \Vert \Pc_{\hat{\zb}}(\ub; t) -  \Pc_{\hat{\zb}}(\vb; t)\Vert_{\hat{\zb}} \leq \Vert\ub - \vb\Vert_{\hat{\zb}},~~~\forall\ub,\vb\in\R^p.
\vspace{-0.25ex}
\end{equation}
We note that $\Vert \zb_{+} - \zb\Vert_{\zb} \leq \Vert \bar{\zb}_{+} - \zb\Vert_{\zb} + \Vert \zb_{+} - \bar{\zb}_{+}\Vert_{\zb} = \lambda_t(\zb) + \delta(\zb) < 1$ by our assumption.
This shows that $\zb_{+}\in\intx{\Zc}$ due to \cite[Theorem 4.1.5 (1)]{Nesterov2004}.

Next, we consider the generalized gradient mappings $G_{\zb}(\zb;t_{+})$ and $G_{\zb_{+}}(\zb_{+}; t_{+})$ at $\zb$ and $\zb_{+}$, respectively defined by \eqref{eq:gradient_mapping} as follows:
\vspace{-0.5ex}
\begin{equation}\label{eq:lm32_proof1}
\begin{array}{ll}
G_{\zb}(\zb; t_{+}) &:= \nabla^2{F}(\zb)\left( \zb - \Pc_{\zb}\left(\zb - \nabla^2{F}(\zb)^{-1}\nabla{F}(\zb);  t_{+}\right)\right), \vspace{0.75ex}\\
G_{\zb_{+}}(\zb_{+}; t_{+}) &:= \nabla^2{F}(\zb_{+})\left(\zb_{+} - \Pc_{\zb_{+}}\left(\zb_{+} -  \nabla^2{F}(\zb_{+})^{-1}\nabla{F}(\zb_{+});  t_{+}\right)\right).
\end{array}
\vspace{-0.5ex}
\end{equation}
Let $r_{\zb}(\bar{\zb}_{+}) := \nabla{F}(\zb) + \nabla^2{F}(\zb)(\bar{\zb}_{+} - \zb)$. Then, by using $\bar{\zb}_{+} :=   \Pc_{\zb}\left(\zb - \nabla^2{F}(\zb)^{-1}\nabla{F}(\zb);  t_{+}\right)$ from \eqref{eq:full_step_Newton}, we can show that
\vspace{-0.5ex}
\begin{equation}\label{eq:lm32_proof2}
-r_{\zb}(\bar{\zb}_{+}) := -\left[\nabla{F}(\zb) + \nabla^2{F}(\zb)(\bar{\zb}_{+} - \zb)\right] \in t_{+}^{-1}\Ac(\bar{\zb}_{+}).
\vspace{-0.5ex}
\end{equation}
Clearly, we can rewrite \eqref{eq:lm32_proof2} as $\bar{\zb}_{+} -\nabla^2{F}(\zb_{+})^{-1}r_{\zb}(\bar{\zb}_{+}) \in \bar{\zb}_{+} + t_{+}^{-1}\nabla^2{F}(\zb_{+})^{-1}\Ac(\bar{\zb}_{+})$.
Then, using the definition \eqref{eq:Pc_oper} of $\Pc_{\zb_{+}}(\cdot) := \left(\Id + t_{+}^{-1}\nabla^2{F}(\zb_{+})^{-1}\Ac\right)^{-1}(\cdot)$, we can derive
\begin{equation}\label{eq:lm32_proof2a}
\zb_{+} = \Pc_{\zb_{+}}\left(\bar{\zb}_{+} -\nabla^2{F}(\zb_{+})^{-1}r_{\zb}(\bar{\zb}_{+}); t_{+} \right) + (\zb_{+} - \bar{\zb}_{+}).
\end{equation}
Now, we can estimate $\lambda_{t_{+}}(\zb_{+})$ defined by \eqref{eq:nt_decrement} using \eqref{eq:lm32_proof1}, \eqref{eq:lm32_proof2a}, \eqref{eq:Pc_oper_property} and  \eqref{eq:lm32_proof2} as follows:
\vspace{-0.5ex}
\begin{align}\label{eq:lm32_proof3}
\lambda_{t_{+}}(\zb_{+}\!) &\!:=\! \Vert G_{\zb_{+}}(\zb_{+}; t_{+})\Vert^{\ast}_{\zb_{+}} \overset{\tiny\eqref{eq:lm32_proof1}}{=} \Vert \zb_{+} - \Pc_{\zb_{+}}\left(\zb_{+} - \nabla^2{F}(\zb_{+})^{-1}\nabla{F}(\zb_{+}); t_{+}\right)\Vert_{\zb_{+}} \nonumber\\
&\overset{\tiny\eqref{eq:lm32_proof2a}}{{\!\!\!}={\!}} \Big\Vert  \Pc_{\zb_{+}}{\!\!}\left(\bar{\zb}_{+} \!-\! \nabla^2{F}(\zb_{+}\!)^{-1}{\!} r_{\zb}(\bar{\zb}_{+}); t_{+} \right) -  \Pc_{\zb_{+}}{\!\!}\left(\zb_{+} \!-\!  \nabla^2{F}(\zb_{+})^{-1}{\!}\nabla{F}(\zb_{+});  t_{+}\right)  \!+\! (\zb_{+} \!-\! \bar{\zb}_{+}) \Big\Vert_{\zb_{+}} \nonumber\\
&\leq \Big\Vert  \Pc_{\zb_{+}}\left(\bar{\zb}_{+} -\nabla^2{F}(\zb_{+})^{-1}r_{\zb}(\bar{\zb}_{+}); t_{+} \right) -  \Pc_{\zb_{+}}\left(\zb_{+} -  \nabla^2{F}(\zb_{+})^{-1}\nabla{F}(\zb_{+});  t_{+}\right) \Big\Vert_{\zb_{+}}\nonumber\\
&{~~~~~~} + \Vert \zb_{+} - \bar{\zb}_{+}\Vert_{\zb_{+}} \nonumber\\
& \overset{\tiny\eqref{eq:Pc_oper_property}}{\leq} \Big\Vert \nabla^2{F}(\zb_{+})^{-1}\left[\nabla{F}(\zb_{+}) - r_{\zb}(\bar{\zb}_{+})\right] + (\bar{\zb}_{+} - \zb_{+}) \Big\Vert_{\Shu{\zb_{+}}}  +  \Vert \zb_{+} - \bar{\zb}_{+}\Vert_{\zb_{+}} \nonumber\\
&\overset{\tiny\eqref{eq:lm32_proof2}}{=} \Big\Vert \nabla^2{F}(\zb_{+})^{-1}\big[\nabla{F}(\zb_{+}) - \nabla{F}(\zb) - \nabla^2{F}(\zb)(\zb_{+} - \zb) + (\nabla^2{F}(\zb_{+})\nonumber\\
&{~~~~~~} - \nabla^2{F}(\zb))(\bar{\zb}_{+} - \zb_{+})\big]\Big\Vert_{\zb_{+}} + \Vert \zb_{+} - \bar{\zb}_{+}\Vert_{\zb_{+}} \nonumber\\
&\leq  \Vert \nabla{F}(\zb_{+}) - \nabla{F}(\zb) - \nabla^2{F}(\zb)(\zb_{+} - \zb)  \Vert_{\zb_{+}}^{\ast} \nonumber\\
& + \Vert (\nabla^2{F}(\zb_{+}) - \nabla^2{F}(\zb))(\bar{\zb}_{+} - \zb_{+})\Vert_{\zb_{+}}^{\ast} +\Vert \zb_{+} - \bar{\zb}_{+}\Vert_{\zb_{+}} \nonumber\\
& \leq \tfrac{1}{1 - \Vert \zb_{+} - \zb \Vert_{\zb}}\Big[ \Vert \nabla{F}(\zb_{+}) - \nabla{F}(\zb) - \nabla^2{F}(\zb)(\zb_{+} - \zb)  \Vert_{\zb}^{\ast} \nonumber\\
& +  \Vert (\nabla^2{F}(\zb_{+}) - \nabla^2{F}(\zb))(\bar{\zb}_{+} - \zb_{+})\Vert_{\zb}^{\ast} \Big]  + \tfrac{\Vert \zb_{+} - \bar{\zb}_{+}\Vert_{\zb} }{1 - \Vert \zb_{+} - \zb \Vert_{\zb}}.
\end{align}
Here, in the last equality of \eqref{eq:lm32_proof3}, we have used the fact that $\Vert\wb\Vert_{\zb_{+}}^2 = \iprods{\nabla^2{F}(\zb_{+})\wb, \wb} \leq (1-\Vert\zb_{+} - \zb\Vert_{\zb})^{-2}\iprods{\nabla^2{F}(\zb)\wb, \wb} = (1-\Vert\zb_{+} - \zb\Vert_{\zb})^{-2}\Vert\wb\Vert_{\zb}^2$ for any $\wb$ and $\zb, \zb_{+}$ such that $\Vert\zb_{+} - \zb\Vert_{\zb} < 1$, and the analogous fact for the dual norms.
Both facts can be derived from \cite[Theorem 4.1.6]{Nesterov2004}.
The condition $\Vert\zb_{+} - \zb\Vert_{\zb} < 1$ is guaranteed since $\Vert\zb_{+} - \zb\Vert_{\zb} \leq \Vert \zb - \bar{\zb}_{+}\Vert_{\zb} + \Vert\zb_{+} - \bar{\zb}_{+}\Vert_{\zb} = \lambda_{t_{+}}(\zb) + \delta(\zb) < 1$ by our assumption.

Similar to the proof of \cite[Theorem 4.1.14]{Nesterov2004}, we can show that
\begin{equation}\label{eq:lm32_proof3a}
\Vert \nabla{F}(\zb_{+}) - \nabla{F}(\zb) - \nabla^2{F}(\zb)(\zb_{+} - \zb)\Vert_{\zb}^{\ast} \leq \frac{\Vert \zb_{+} - \zb\Vert_{\zb}^2}{1 - \Vert \zb_{+} - \zb\Vert_{\zb}}.
\end{equation}
Next, we need to estimate $B := \Vert (\nabla^2{F}(\zb_{+}) - \nabla^2{F}(\zb))(\bar{\zb}_{+} - \zb_{+})\Vert_{\zb}^{\ast}$.
We define
\begin{equation*}
\Sigma := \nabla^2{F}(\zb)^{-1/2}\left(\nabla^2{F}(\zb_{+}) - \nabla^2{F}(\zb)\right)\nabla^2{F}(\zb)^{-1/2}.
\end{equation*}
By \cite[Theorem 4.1.6]{Nesterov2004}, we can show that
\begin{equation*}
\Vert \Sigma\Vert \leq \max\set{1 - (1 - \Vert\zb_{+} -\zb\Vert_{\zb})^2, (1 - \Vert\zb_{+} - \zb\Vert_{\zb})^{-2} - 1} = \frac{2\Vert\zb_{+} - \zb\Vert_{\zb} - \Vert\zb_{+} - \zb\Vert_{\zb}^2}{(1 - \Vert\zb_{+} - \zb\Vert_{\zb})^2}.
\end{equation*}
Using this inequality we can estimate $B$ as
\begin{align*}
B^2 &=  (\bar{\zb}_{+} - \zb_{+})^{\top}\nabla^2{F}(\zb)^{1/2}\Sigma^2\nabla^2{F}(\zb)^{1/2}(\bar{\zb}_{+} - \zb_{+}) \leq \Vert \Sigma \Vert^2\Vert\bar{\zb}_{+} - \zb_{+}\Vert_{\zb}^2\nonumber\\
&\leq \left(\frac{2\Vert\zb_{+} - \zb\Vert_{\zb} - \Vert\zb_{+} - \zb\Vert_{\zb}^2}{(1 - \Vert\zb_{+} - \zb\Vert_{\zb})^2}\right)^2\Vert\bar{\zb}_{+} - \zb_{+}\Vert_{\zb}^2,
\end{align*}
which implies
\begin{equation}\label{eq:lm32_proof3b}
B \leq \left(\frac{2\Vert\zb_{+} - \zb\Vert_{\zb} - \Vert\zb_{+} - \zb\Vert_{\zb}^2}{(1 - \Vert\zb_{+} - \zb\Vert_{\zb})^2}\right)\Vert\bar{\zb}_{+} - \zb_{+}\Vert_{\zb}.
\end{equation}
Substituting \eqref{eq:lm32_proof3a} and \eqref{eq:lm32_proof3b} into \eqref{eq:lm32_proof3} we get
\begin{align}\label{eq:lm32_proof4}
\lambda_{t_{+}}(\zb_{+}) &\leq \frac{\Vert\zb_{+} - \zb\Vert_{\zb}^2}{\left(1 - \Vert\zb_{+} - \zb\Vert_{\zb}\right)^2} +  \frac{\left[2\Vert\zb_{+} - \zb\Vert_{\zb} - \Vert\zb_{+} - \zb\Vert_{\zb}^2\right]\Vert\bar{\zb}_{+} - \zb_{+}\Vert_{\zb}}{(1 - \Vert\zb_{+} - \zb\Vert_{\zb})^3} + \frac{\Vert \zb_{+} - \bar{\zb}_{+}\Vert_{\zb} }{1 - \Vert \zb_{+} - \zb \Vert_{\zb}} \nonumber\\
&= \frac{\Vert\zb_{+} - \zb\Vert_{\zb}^2}{\left(1 - \Vert\zb_{+} - \zb\Vert_{\zb}\right)^2}  + \frac{\Vert \zb_{+} - \bar{\zb}_{+}\Vert_{\zb} }{\left(1 - \Vert \zb_{+} - \zb \Vert_{\zb}\right)^3}.
\end{align}
Finally, we note that $\lambda_{t_{+}}(\zb) := \Vert G_{\zb}(\zb; t_{+})\Vert_{\zb}^{\ast} = \Vert \zb - \Pc_{\zb}\left(\zb - \nabla^2{F}(\zb)^{-1}\nabla{F}(\zb); t_{+} \right)\Vert_{\zb} = \Vert\zb - \bar{\zb}_{+} \Vert_{\zb}$ due to \eqref{eq:full_step_Newton}.
Using the triangle inequality we have $\Vert \zb_{+}-\zb\Vert_{\zb} \leq \Vert \zb - \bar{\zb}_{+}\Vert_{\zb} + \Vert\zb_{+} - \bar{\zb}_{+}\Vert_{\zb} = \lambda_{t_{+}}(\zb) + \delta(\zb) < 1$.
Since the right-hand side of \eqref{eq:lm32_proof4} is monotonically increasing with respect to $\Vert \zb_{+}-\zb\Vert_{\zb}$, using the last inequality into \eqref{eq:lm32_proof4}, we obtain \eqref{eq:key_estimate1}.
\Eproof
%%% End of the proof.

%%% The proof of Theorem 4.3.
\beforesubsec
\subsection{\bf The proof of Theorem \ref{th:generalized_newton_convergence}: Local quadratic convergence of \ref{eq:iFsGNM}}\label{apdx:th:generalized_newton_convergence}
\aftersubsec
We first prove (a). Given a fixed parameter $t > 0$ sufficiently small, our objective is to find $\beta\in (0, 1)$ such that if $\lambda_{t}(\zb^k) \leq \beta$, then $\lambda_{t}(\zb^{k\!+\!1}) \leq \beta$.
Indeed, using the key estimate \eqref{eq:key_estimate1} with $t$ instead of $t_{+}$, we can see that to guarantee $\lambda_{t}(\zb^{k\!+\!1}) \leq \beta$, we require
\begin{equation*}
\left(\frac{\lambda_{t}(\zb^k) + \delta(\zb^k)}{1 - \lambda_{t}(\zb^k) - \delta(\zb^k)}\right)^2 +  \frac{\delta(\zb^k)}{\left(1-\lambda_{t}(\zb^k) - \delta(\zb^k)\right)^3} \leq \beta.
\end{equation*}
Since the left-hand side of this inequality is monotonically increasing when $\lambda_{t}(\zb^k)$ and $\delta(\zb^k)$ are increasing, we can overestimate it by
\begin{equation*}
\left(\frac{\beta+\delta}{1 - \beta - \delta}\right)^2 + \frac{\delta}{(1-\beta - \delta)^3} \leq \beta.
\end{equation*}
Using the identity $\frac{\beta+\delta}{1-\beta-\delta} = \frac{\beta}{1-\beta} + \frac{\delta}{(1-\beta)(1-\beta-\delta)}$, we can write the last inequality as
\begin{equation}\label{eq:th43_est2}
\Big[\frac{2\beta}{(1-\beta)^2(1-\beta-\delta)} +  \frac{\delta}{(1-\beta)^2(1-\beta-\delta)^2} + \frac{1}{(1-\beta - \delta)^3}\Big]\delta \leq \beta - \left(\frac{\beta}{1-\beta}\right)^2.
\end{equation}
Clearly, the left-hand side of \eqref{eq:th43_est2} is positive if $0 < \delta < 1-\beta$.
Hence, we need to choose $\beta \in (0, 0.5(3-\sqrt{5}))$ such that the right-hand side of \eqref{eq:th43_est2} is also positive.
Now, we choose $\delta \geq 0$ such that $\delta \leq \beta(1-\beta) < 1-\beta$.
Then, \eqref{eq:th43_est2} can be one more time overestimated by
\begin{equation*}
\Big(\frac{2\beta^3 - 5\beta^2 + 3\beta + 1}{(1-\beta)^4}\Big)\delta \leq \beta(1 - 3\beta + \beta^2),
\end{equation*}
which implies
\begin{equation*}
0 \leq \delta \leq \frac{\beta(1 - 3\beta + \beta^2)(1-\beta)^4}{2\beta^3 - 5\beta^2 + 3\beta + 1} < \beta(1-\beta),~~\forall\beta\in \left(0, 0.5(3-\sqrt{5})\right).
\end{equation*}
This inequality suggests that we can choose $\delta := \frac{\beta(1 - 3\beta + \beta^2)(1-\beta)^4}{2\beta^3 - 5\beta^2 + 3\beta + 1} > 0$. In this case, we also have $\delta(\zb) + \lambda_t(\zb) \leq \delta + \beta < 1$, which guarantees the condition of Theorem \ref{th:main_estimate1}.
Hence, we can conclude that $\lambda_t(\zb^k) \leq \beta$ implies $\lambda_t(\zb^{k+1}) \leq \beta$. Hence, $\set{\zb^k}$ belongs to $\Qc_{t}(\beta)$.

(b)~Next, to guarantee a quadratic convergence, we can choose $\delta_k$ such that $\delta(\zb^k) \leq \delta_k \leq \bar{\delta}_k := \frac{\lambda_{t}(\zb^k)^2}{1-\lambda_{t}(\zb^k)}$.
Substituting the upper bound $\bar{\delta}_k$ of $\delta(\zb^k)$ into \eqref{eq:key_estimate1} we obtain
\begin{equation*}
\lambda_{t}(\zb^{k\!+\!1})  \leq \left(\frac{2-4\lambda_t(\zb^k) + \lambda_t(\zb^k)}{(1-2\lambda_t(\zb^k))^3}\right)\lambda_t(\zb^k)^2.
\end{equation*}
Let us consider the function $s(r) := \frac{(2 - 4r + r^2)r^2}{(1 - 2r)^3}$ on $[0, 1]$.
We can easily check that $s(r) < 1$ for all $r \in [0, 1]$.
Hence, $\lambda_{t}(\zb^{k\!+\!1}) < 1$ as long as $\lambda_{t}(\zb^k) < 1$.
This proves the estimate \eqref{eq:iFsGNM_est}.

Now, let us choose some $\beta \in (0, 1)$ such that $\lambda_t(\zb^k) \leq \beta$. Then \eqref{eq:iFsGNM_est} leads to
\begin{equation*}
\lambda_t(\zb^{k+1}) \leq \left(\frac{2-4\beta+\beta^2}{(1-2\beta)^3}\right)\lambda_t(\zb^k)^2 = c\lambda_t(\zb)^2,
\end{equation*}
where $c :=  \frac{2-4\beta+\beta^2}{(1-2\beta)^3} > 0$.
We need to choose $\beta \in (0, 1)$ such that $c\lambda_t(\zb^k) < 1$. Since $\lambda_t(\zb^k) \leq \beta$, we choose $c\beta < 1$, which is equivalent to $9\beta^3 - 16\beta^2 + 8\beta - 1 < 0$.
If $\beta \in (0, 0.18858]$ then $9\beta^3 - 16\beta^2 + 8\beta - 1 < 0$.
Therefore, the radius of the quadratic convergence region of $\set{\lambda_t(\zb^k)}$ is $r := 0.18858$.

(c)~Finally, for any $\beta \in (0, 0.18858]$, we can write $c\lambda_t(\zb^{k+1}) \leq (c\lambda_t(\zb^k))^2$.
By induction, $c\lambda_t(\zb^k) \leq (c\lambda_t(\zb^0))^{2^k} \leq c^{2^k}\beta^{2^k} < 1$.
We obtain $\lambda_t(\zb^k) \leq   c^{2^{k-1}}\beta^{2^k}$.
Let us choose $\delta_k := \frac{\lambda_t(\zb^k)^2}{1-\lambda_t(\zb^k)}$.
For $\epsilon \in (0, \beta)$, assume that $c^{2^{k-1}}\beta^{2^k}\leq\epsilon$.
From Lemma~Lemma~\ref{le:sol_measure}, we can choose $t := (1-\epsilon)(\sqrt{\nu} + \epsilon + 2\epsilon^2/(1-\epsilon))^{-1}\varepsilon$ .
Then, $\zb^k$ is an $\varepsilon$-solution of \eqref{eq:mono_inclusion}.
It remains to use the fact that $c^{2^{k-1}}\beta^{2^k}\leq\epsilon$ to upper bound the number of iterations $k := \mathcal{O}\left(\ln\left(\ln(1/\epsilon)\right)\right)$.
\Eproof
%%% End of the proof.

%%% The proof of Theorem 4.4.
\beforesubsec
\subsection{\bf The proof of Theorem \ref{th:generalized_ds_newton_convergence}: Local quadratic convergence of \ref{eq:iDsGNM}}\label{apdx:th:generalized_ds_newton_convergence}
\aftersubsec
(a)~Given a fixed parameter $t > 0$ sufficiently small, it follows from \ref{eq:iDsGNM} and \eqref{eq:lm32_proof2a} that
\begin{equation*}
\begin{array}{ll}
\bar{\zb}^{k+2} &= \Pc_{\zb^{k+1}}\left(\zb^{k+1} - \nabla^2{F}(\zb^{k+1})^{-1}\nabla{F}(\zb^{k+1}); t\right),\vspace{1ex}\\
\zb^{k+1} &= \Pc_{\zb^{k+1}}\left(\Shu{\bar{\zb}^{k+1}} - \nabla^2{F}(\zb^{k+1})^{-1}r_{\zb^k}(\bar{\zb}^{k+1}); t\right) + (\zb^{k+1} - \bar{\zb}^{k+1}).
\end{array}
\end{equation*}
Hence, using these notations and  the same proof as  \eqref{eq:lm32_proof4} with $t$ instead of $t_{+}$, \Shu{and supposing $\Vert\zb^{k+1} - \zb^k\Vert_{\zb^k}<1$,} we can derive
\begin{align}\label{eq:th44_proof1}
\Vert \bar{\zb}_{k+2} - \zb^{k+1}\Vert_{\zb_{k+1}} \leq \left(\frac{\Vert\zb^{k+1} - \zb^k\Vert_{\zb^k}}{1-\Vert\zb^{k+1} - \zb^k\Vert_{\zb^k}}\right)^2 + \frac{\Vert\zb^{k+1} - \bar{\zb}^{k+1}\Vert_{\zb^k}}{(1-\Vert\zb^{k+1} - \zb^k\Vert_{\zb^k})^3}.
\end{align}
Now, let us define  $\tilde{\lambda}_t(\zb^k) := \Vert \tilde{\zb}^{k\!+\!1} - \zb^k\Vert_{\zb^k}$ and $\alpha_k := (1 + \tilde{\lambda}_t(\zb^k))^{-1}$ as in \ref{eq:iDsGNM}.
From the update \ref{eq:iDsGNM}, $\zb^{k\!+\!1} := (1-\alpha_k)\zb^k + \alpha_k\tilde{\zb}^{k\!+\!1}$, we have
\begin{equation*}
\begin{array}{lll}
&\Vert \zb^{k\!+\!1} - \zb^k\Vert_{\zb^k} &= \alpha_k\Vert \tilde{\zb}^{k\!+\!1} - \zb^k\Vert_{\zb^k} = \alpha_k\tilde{\lambda}_t(\zb^k),~~~\text{and}\vspace{1ex}\\
&\Vert \zb^{k+1} - \bar{\zb}^{k+1}\Vert_{\zb^k} &\leq \Vert \zb^{k+1} - \tilde{\zb}^{k+1}\Vert_{\zb^k} + \Vert \tilde{\zb}^{k+1} - \bar{\zb}^{k+1}\Vert_{\zb^k} = (1-\alpha_k)\Vert\tilde{\zb}^{k+1} - \zb^k\Vert_{\zb^k} + \delta(\zb^k) \vspace{0.75ex}\\
& & = (1-\alpha_k)\tilde{\lambda}_t(\zb^k) + \delta(\zb^k).
\end{array}
\end{equation*}
Substituting these expressions into \eqref{eq:th44_proof1} we get
\begin{align*}
\Vert \bar{\zb}_{k+2} - \zb^{k+1}\Vert_{\zb_{k+1}} &\leq \left( \frac{\alpha_k\tilde{\lambda}_{t}(\zb^k)}{1 - \alpha_k\tilde{\lambda}_{t}(\zb^k)}\right)^2  + \frac{\delta(\zb^k) + (1-\alpha_k)\tilde{\lambda}_t(\zb^k)}{\left(1 - \alpha_k\tilde{\lambda}_{t}(\zb^k)\right)^3}.
\end{align*}
Substituting $\alpha_k := (1 + \tilde{\lambda}_{t}(\zb^k))^{-1}$ into the last inequality and simplifying the result, we get
\begin{equation*}
\Vert \bar{\zb}_{k+2} - \zb^{k+1}\Vert_{\zb_{k+1}} \leq \left(2 + 2\tilde{\lambda}_{t}(\zb^k) + \tilde{\lambda}_{t}(\zb^k)^2\right) \tilde{\lambda}_{t}(\zb^k)^2 + \left(1 + \tilde{\lambda}_{t}(\zb^k)\right)^3\delta(\zb^k).
\end{equation*}
Next, by the triangle inequality, it follows from \eqref{eq:lm32_proof1} and the definition of $\lambda_t(\zb)$ and $\tilde{\lambda}_t(\zb)$ that  $\tilde{\lambda}_t(\zb^{k\!+\!1}) = \Vert\tilde{\zb}^{k+2} - \zb^{k+1}\Vert_{\zb^{k+1}} \leq \Vert \bar{\zb}^{k+2} - \zb^{k+1}\Vert_{\zb^{k+1}} + \Vert \tilde{\zb}^{k+2} - \bar{\zb}^{k+2}\Vert_{\zb^{k+1}} = \Vert \bar{\zb}^{k+2} - \zb^{k+1}\Vert_{\zb^{k+1}}  + \delta(\zb^{k\!+\!1})$.
Combining this estimate and the above inequality we get
\begin{equation*}
\tilde{\lambda}_{t}(\zb^{k\!+\!1}) \leq \left(2 + 2\tilde{\lambda}_{t}(\zb^k) + \tilde{\lambda}_{t}(\zb^k)^2\right) \tilde{\lambda}_{t}(\zb^k)^2 + \left(1 + \tilde{\lambda}_{t}(\zb^k)\right)^3\delta(\zb^k) + \delta(\zb^{k\!+\!1}).
\end{equation*}
If we choose $\delta(\zb^k) \leq \delta_k \leq \frac{\tilde{\lambda}_{t}(\zb^k)^2}{1+ \tilde{\lambda}_{t}(\zb^k)}$, then, by induction, $\delta(\zb^{k\!+\!1}) \leq \delta_{k\!+\!1} \leq\frac{\tilde{\lambda}_{t}(\zb^{k\!+\!1})^2}{1+\tilde{\lambda}_{t}(\zb^{k\!+\!1})}$.
Substituting these bounds into the last inequality and simplifying the result, we obtain
\begin{equation*}
\tilde{\lambda}_{t}(\zb^{k\!+\!1}) \leq \left(\frac{2\tilde{\lambda}_t(\zb^k)^2 + 4\tilde{\lambda}_t(\zb^k) + 3}{1 - \tilde{\lambda}_t(\zb^k)^2\left(2\tilde{\lambda}_t(\zb^k)^2 + 4\tilde{\lambda}_t(\zb^k) + 3\right)}\right)\tilde{\lambda}_{t}(\zb^k)^2,
\end{equation*}
which is indeed \eqref{eq:key_estimate_2b}.

From \eqref{eq:key_estimate_2b}, after a few elementary calculations, we can see that $\tilde{\lambda}_{t}(\zb^{k\!+\!1}) \leq \tilde{\lambda}_t(\zb^k)$ if $\tilde{\lambda}_t(\zb^k)(1 + \tilde{\lambda}_t(\zb^k))( 2\tilde{\lambda}_t(\zb^k)^2 + 4\tilde{\lambda}_t(\zb^k) + 3) \leq 1$.
We note that the function $s(\tau) := \tau(1+\tau)(2\tau^2 + 4\tau+3)$ is increasing on $[0, 0.5(3-\sqrt{5}))$.
By numerically computing $\tilde{\lambda}_t(\zb^k)$ we can observe that if $\tilde{\lambda}_t(\zb^k) \in [0, 0.21027]$, then $\tilde{\lambda}_{t}(\zb^{k\!+\!1}) \leq \tilde{\lambda}_t(\zb^k)$.
Hence, if $\tilde{\lambda}_t(\zb^k) \leq \beta$ then $\tilde{\lambda}_t(\zb^{k+1}) \leq \beta$. We can say that $\set{\zb^k}\subset \Omega_t(\beta)$.

We now prove (b). Indeed, if we take any $\beta \in (0, 0.21027]$, we can show from \eqref{eq:key_estimate_2b} that
\begin{equation*}
\tilde{\lambda}_{t}(\zb^{k\!+\!1}) \leq \left(\frac{2\beta^2 + 4\beta + 3}{1 - \beta^2\left(2\beta^2 + 4\beta + 3\right)}\right)\tilde{\lambda}_{t}(\zb^k)^2,
\end{equation*}
where $\bar{c} :=  \left(\frac{2\beta^2 + 4\beta + 3}{1 - \beta^2\left(2\beta^2 + 4\beta + 3\right)}\right) \in (0, +\infty)$.
To guarantee $\bar{c}\beta < 1$, we need to choose $\beta > 0$ such that $2\beta^4 + 6\beta^3 + 7\beta^2 + 3\beta - 1 < 0$.
This condition leads to $\beta \in (0, 0.21027]$.
Hence, for any $0 < \beta \leq 0.21027$, if $\zb^0\in\Qc_{t}(\beta)$, then $\tilde{\lambda}_{t}(\zb^{k\!+\!1}) \leq \bar{c}\tilde{\lambda}_{t}(\zb^k)^2 < 1$ and, therefore, $\big\{\tilde{\lambda}_{t}(\zb^k)\big\}$ quadratically converges to zero.

(c)~To prove the last conclusion in (c), from \eqref{eq:lm4_est100}, we can show that
\begin{align*}
\mathrm{dist}_{\zb^k}(\boldsymbol{0}, \Ac_{\Zc}(\zb^{k+1})) \leq t\delta_k +  t\norm{\nabla{F}(\zb^k)}_{\zb^k}^{\ast} + t\norm{\zb^{k+1} - \zb^k}_{\zb^k} \leq t(\delta_k + \sqrt{\nu} + \alpha_k\tilde{\lambda}_t(\zb^k)).
\end{align*}
Since $\tilde{\lambda}_t(\zb^k) \leq \bar{c}^{2^k-1}\lambda_t(\zb^0)^{2^k} \leq \bar{c}^{2^k-1}\beta^{2^k}$, $\delta_k \leq \frac{\tilde{\lambda}_t(\zb^k)^2}{1 + \tilde{\lambda}_t(\zb^k)}$, and $\alpha_k = \frac{\tilde{\lambda}_t(\zb^k)}{1 + \tilde{\lambda}_t(\zb^k)}$, we obtain the last conclusion as a consequence of Lemma~\ref{le:sol_measure} with the same proof as in Theorem~\ref{th:generalized_newton_convergence}.
\Eproof
%% End of the proof.

%%% The proof of Lemma - The update rule for penalty parameter.
\beforesubsec
\subsection{\bf The proof of Lemma \ref{le:update_penalty_param}: The update rule for the penalty parameter}\label{apdx:le:update_penalty_param}
\aftersubsec
Let us define $\bar{\ub}^k := \Pc_{\zb^k}\left(\zb^k - \nabla^2{F}(\zb^k)^{-1}\nabla{F}(\zb^k); t_k\right)$.
Then, $\lambda_{t_k}(\zb^k)$ defined by \eqref{eq:nt_decrement} becomes $\lambda_{t_k}(\zb^k) := \Vert G_{\zb^k}(\zb^k; t_k)\Vert_{\zb^k}^{\ast} = \Vert \zb^k - \Pc_{\zb^k}\left(\zb^k - \nabla^2{F}(\zb^k)^{-1}\nabla{F}(\zb^k); t_k\right)\Vert_{\zb^k} = \Vert \zb^k - \bar{\ub}^k \Vert_{\zb^k}$.
We note that $\bar{\ub}^k = \Pc_{\zb^k}\left(\zb^k - \nabla^2{F}(\zb^k)^{-1}\nabla{F}(\zb^k); t_k\right)$ leads to
\begin{equation*}
- t_k \left(\nabla{F}(\zb^k) + \nabla^2{F}(\zb^k)(\bar{\ub}^k - \zb^k)\right) \in \Ac(\bar{\ub}^k).
\end{equation*}
Combining this inclusion and \eqref{eq:lm32_proof2} and using the monotonicity of $\Ac$, we can derive
\begin{equation*}
\iprods{t_{k\!+\!1}\left[\nabla{F}(\zb^k) + \nabla^2{F}(\zb^k)(\bar{\zb}^{k\!+\!1} - \zb^k)\right] - t_k\left[\nabla{F}(\zb^k) + \nabla^2{F}(\zb^k)(\bar{\ub}^k - \zb^k)\right], \bar{\zb}^{k\!+\!1} - \bar{\ub}^k} \leq 0.
\end{equation*}
By rearranging this expression using $t_{k\!+\!1} := (1-\sigma_{\beta})t_k$ from \ref{eq:inexact_pf_scheme}, we finally obtain
\begin{align*}
\Vert \bar{\zb}^{k\!+\!1} - \bar{\ub}^k\Vert_{\zb^k}^2 &\leq \frac{\sigma_{\beta}}{1 - \sigma_{\beta}}\iprods{\nabla{F}(\zb^k) + \nabla^2{F}(\zb^k)(\bar{\ub}^k - \zb^k), \bar{\zb}^{k\!+\!1} - \bar{\ub}^k} \nonumber\\
& \leq  \frac{\sigma_{\beta}}{1 - \sigma_{\beta}}\Vert \nabla{F}(\zb^k) + \nabla^2{F}(\zb^k)(\bar{\ub}^k - \zb^k)\Vert_{\zb^k}^{\ast}\Vert\bar{\zb}^{k\!+\!1} - \bar{\ub}^k\Vert_{\zb^k}.
\end{align*}
where the last inequality follows from the elementary Cauchy-Schwarz inequality.
This inequality eventually leads to
\begin{equation*}
\begin{array}{ll}
\Vert \bar{\zb}^{k\!+\!1} - \bar{\ub}^k\Vert_{\zb^k} &\leq \frac{\sigma_{\beta}}{1 - \sigma_{\beta}}\Vert \nabla{F}(\zb^k) + \nabla^2{F}(\zb^k)(\bar{\ub}^k - \zb^k)\Vert_{\zb^k}^{\ast} \vspace{1ex}\\
&\leq  \frac{\sigma_{\beta}}{1 - \sigma_{\beta}}\left[ \Vert \nabla{F}(\zb^k)\Vert_{\zb^k}^{\ast} + \Vert \nabla^2{F}(\zb^k)(\bar{\ub}^k - \zb^k)\Vert_{\zb^k}^{\ast} \right] \vspace{1ex}\\
&\leq  \frac{\sigma_{\beta}}{1 - \sigma_{\beta}}\left[ \Vert \nabla{F}(\zb^k)\Vert_{\zb^k}^{\ast} + \Vert  \bar{\ub}^k - \zb^k \Vert_{\zb^k} \right].
\end{array}
\end{equation*}
Now, by the triangle inequality, we have $\Vert\bar{\zb}^{k\!+\!1} - \zb^k\Vert_{\zb^k} \leq \Vert\bar{\zb}^{k\!+\!1} - \bar{\ub}^k\Vert_{\zb^k} + \Vert\bar{\ub}^k - \zb^k\Vert_{\zb^k}$. This inequality is equivalent to $\lambda_{t_{k\!+\!1}}(\zb^k)  \leq \Vert\bar{\zb}^{k\!+\!1} - \bar{\ub}^k\Vert_{\zb^k} + \lambda_{t_k}(\zb^k)$ due to the definitions  $\lambda_{t_{k\!+\!1}}(\zb^k) = \Vert \bar{\zb}^{k\!+\!1} - \zb^k\Vert_{\zb^k}$ and  $\lambda_{t_k}(\zb^k) = \Vert \bar{\ub}^k - \zb^k\Vert_{\zb^k}$.
Using the last estimate \Shu{in} the above inequality we get
\begin{equation*}
\lambda_{t_{k\!+\!1}}(\zb^k) \leq \lambda_{t_k}(\zb^k) + \frac{\sigma_{\beta}}{1 - \sigma_{\beta}}\left[ \Vert \nabla{F}(\zb^k)\Vert_{\zb^k}^{\ast} + \lambda_{t_k}(\zb^k) \right],
\end{equation*}
which is  \eqref{eq:key_estimate2}.
The second inequality of \eqref{eq:key_estimate2} follows from the fact that \Shu{$\Vert \nabla{F}(\zb^k)\Vert_{\zb^k}^{\ast}\leq \sqrt{\nu}$}.

Let  us denote by $\gamma_k := \left(\frac{\sigma_{\beta}}{1-\sigma_{\beta}}\right)\left(\sqrt{\nu} + \lambda_{t_k}(\zb^k)\right)$.
For a given $\beta\in (0, 1)$, we now assume that $\lambda_{t_k}(\zb^k) \leq \beta$.
Then, by using \eqref{eq:key_estimate2} \Shu{in} \eqref{eq:key_estimate1} and the monotonic increase of its right-hand side with respect to $\lambda_{t_{k+1}}(\zb^k)$, we can derive
\begin{align*}
\lambda_{t_{k\!+\!1}}(\zb^{k\!+\!1}) &\leq \left(\frac{\lambda_{t_k}(\zb^k) + \vert\gamma_k\vert + \delta_k}{1 - \lambda_{t_k}(\zb^k) - \vert\gamma_k\vert - \delta_k}\right)^2 + \frac{\delta_k}{\left(1 - \lambda_{t_k}(\zb^k) - \vert\gamma_k\vert-\Shu{ \delta_k}\right)^3} \vspace{1ex}\nonumber\\
& \leq \left(\frac{\beta + \vert\gamma_k\vert + \delta_k}{1 - \beta - \vert\gamma_k\vert - \delta_k}\right)^2 + \frac{\delta_k}{(1 - \beta - \vert\gamma_k\vert - \delta_k)^3}\Shu{,}
\end{align*}
\Shu{as long as $\beta + \vert\gamma_k\vert + \delta_k <1$.}
Let us denote $\theta_k := \beta + \vert\gamma_k\vert$.
By using the identity $\frac{\beta + \vert\gamma_k\vert  + \delta_k}{1 - \beta - \vert\gamma_k\vert -\delta_k} = \frac{\beta + \vert\gamma_k\vert }{1 - \beta - \vert\gamma_k\vert } + \frac{\delta_k}{(1-\theta_k)(1-\theta_k-\delta_k)}$, we can  rewrite the last inequality as
\begin{align*}
\lambda_{t_{k\!+\!1}}(\zb^{k\!+\!1}) \leq \left(\!\frac{\theta_k}{1 \!-\! \theta_k }\!\right)^2 + \left[\frac{2\theta_k }{(1-\theta_k)^2(1\!-\!\theta_k \!-\!\delta_k)} + \frac{\delta_k}{(1-\theta_k)^2(1\!-\!\theta_k \!-\!\delta_k)^2} + \frac{1}{(1\!-\!\theta_k \!-\!\delta_k)^3}\right]\delta_k.
\end{align*}
If we choose $\delta_k$ such that $0\leq\delta_k \leq \theta_k(1-\theta_k)<1-\theta_k$, then the above inequality implies
\begin{align}\label{eq:lm32_est20b}
\lambda_{t_{k\!+\!1}}(\zb^{k\!+\!1}) \leq  \left(\frac{\theta_k}{1 \!-\! \theta_k }\right)^2  +  \left[\frac{2\theta_k(1\!-\!\theta_k)^2 + \theta_k(1 \!-\! \theta_k) \!+\! 1}{(1-\theta_k)^6}\right]\delta_k :=  \left(\frac{\theta_k}{1 - \theta_k }\right)^2 + M_k\delta_k.
\end{align}
\Shu{Take} any $c\in (0, 1)$, .e.g., $c := 0.95$, and \Shu{choose} $\delta_k$ such that $0 \leq \delta_k \leq \frac{(1-c^2)}{c^2M_k}\left(\frac{\theta_k}{1-\theta_k}\right)^2$.
Hence, in order to guarantee $\lambda_{t_{k\!+\!1}}(\zb^{k\!+\!1}) \leq \beta$, by using \eqref{eq:lm32_est20b}, we can impose the condition $\left(\frac{\theta_k}{1 - \theta_k }\right)^2 + M_k\delta_k \leq\frac{1}{c^2}\left(\frac{\theta_k}{1-\theta_k}\right)^2 \leq \beta$, which is equivalent to $\frac{\theta_k}{1 - \theta_k} \leq c\sqrt{\beta}$.
This condition leads to $\theta_k \Shu{\geq} \frac{c\sqrt{\beta}}{1+c\sqrt{\beta}} $, and therefore, $\vert\gamma_k\vert \leq \frac{c\sqrt{\beta}}{1+c\sqrt{\beta}} - \beta$.
Since $\vert\gamma_k\vert > 0$, we need to choose $\beta$ such that $0 < \beta < 0.5(1 + 2c^2 - \sqrt{1 +4c^2})$.

Next, by the choice of $\delta_k$, we require $0 \leq \delta_k \leq \min\set{ \frac{(1-c^2)}{c^2M_k}\left(\frac{\theta_k}{1-\theta_k}\right)^2, \theta_k(1-\theta_k)}$. Using the fact that $M_k = \frac{2\theta_k(1-\theta_k)^2 + \theta_k(1-\theta_k) + 1}{(1-\theta_k)^6}$ from \eqref{eq:lm32_est20b} and $0 \leq \theta_k \leq \frac{c\sqrt{\beta}}{1+c\sqrt{\beta}}$, we can show that the condition on $\delta_k$ holds if we choose
\begin{align*}
\delta_k \leq \bar{\delta} := \frac{(1-c^2)\beta}{(1+c\sqrt{\beta})^3\left[3c\sqrt{\beta} + c^2\beta + (1+c\sqrt{\beta})^3\right]}.
\end{align*}
On the other hand, we have  $\vert\gamma_k\vert = \left\vert   \left(\frac{\sigma_{\beta}}{1-\sigma_{\beta}}\right)\left(\sqrt{\nu} + \lambda_{t_k}(\zb^k)\right)\right\vert \leq \left(\frac{\sigma_{\beta}}{1-\sigma_{\beta}}\right)\left(\sqrt{\nu} + \beta\right)$.
In order to guarantee that $\vert\gamma_k\vert \leq \frac{c\sqrt{\beta}}{1+c\sqrt{\beta}} - \beta$, we use the above estimate to impose a condition $\left(\frac{\sigma_{\beta}}{1-\sigma_{\beta}}\right) \leq \frac{1}{\sqrt{\nu} + \beta}\left(\frac{c\sqrt{\beta}}{1 + c\sqrt{\beta}} - \beta\right)$, which leads to
\begin{equation*}
\sigma_{\beta} \leq \bar{\sigma}_{\beta} := \frac{c\sqrt{\beta} - \beta(1 + c\sqrt{\beta})}{(1+c\sqrt{\beta})\sqrt{\nu} + c\sqrt{\beta}}.
\end{equation*}
This estimate is exactly the right-hand side of \eqref{eq:choice_of_sigma}.
Finally, using \eqref{eq:key_estimate2} and the definition of $\gamma_k$, we can easily show that $\lambda_{t_{k\!+\!1}}(\zb^k) \leq \lambda_{t_k}(\zb^k) + \abs{\gamma_k} \leq \beta + \abs{\gamma_k} \equiv \theta_k \leq \frac{c\sqrt{\beta}}{1+c\sqrt{\beta}}$.
\Eproof
%% End of the proof.

%%% The proof of Theorem 4.4.
\beforesubsec
\subsection{\bf The proof of Theorem \ref{th:ppf_convergence}: The worst-case iteration-complexity of \ref{eq:inexact_pf_scheme}}\label{apdx:th:ppf_convergence}
\aftersubsec
By Lemma~\ref{le:sol_measure} and $\lambda_{t_{k+1}}(\zb^k) \leq \frac{c\sqrt{\beta}}{1 + c\sqrt{\beta}}$, we can see that $\zb^k$ is an $\varepsilon$-solution of \eqref{eq:mono_inclusion} if $t_k := M_0^{-1}\varepsilon$, where $M_0 := \left(1 - \frac{c\sqrt{\beta}}{1 + c\sqrt{\beta}}\right)^{-1}\left(\sqrt{\nu} + \frac{c\sqrt{\beta}}{1 + c\sqrt{\beta}} + 2\bar{\delta}_t(\beta)\right)  = \mathcal{O}(\sqrt{\nu})$.

On the other hand, by induction, it follows from the update rule $t_{k\!+\!1} = (1-\sigma_{\beta})t_k$ of \ref{eq:inexact_pf_scheme}  that $t_k = (1-\sigma_{\beta})^kt_0$.
Hence, $\zb^k$ is an $\varepsilon$-solution of \eqref{eq:mono_inclusion} if  we have $t_k = (1-\sigma_{\beta})^k t_0 \leq \frac{\varepsilon}{M_0}$.
This condition leads to $k\ln(1-\sigma_{\beta}) \geq \ln\left(\frac{\varepsilon}{M_0t_0}\right)$, which implies  $k  \leq \frac{\ln(\varepsilon/(M_0t_0))}{\ln(1-\sigma_{\beta})}$.
Using an elementary inequality $\ln(1-\sigma_{\beta}) \leq -\sigma_{\beta}$, we can upper bound $k$ as
\begin{equation*}
k \geq \frac{1}{\bar{\sigma}_{\beta}}\ln\left(\frac{M_0 t_0}{\varepsilon}\right) = \frac{\left((1+c\sqrt{\beta})\sqrt{\nu} + c\sqrt{\beta}\right)}{c\sqrt{\beta} - \beta(1 + c\sqrt{\beta})}\ln\left(\frac{M_0t_0}{\varepsilon}\right).
\end{equation*}
Consequently, the worst-case iteration-complexity of \ref{eq:inexact_pf_scheme} is $\mathcal{O}\left(\sqrt{\nu}\ln\left(\frac{\sqrt{\nu} t_0}{\varepsilon}\right)\right)$.
\Eproof
%%% End of the proof.

%%% The proof of Theorem 3.5.
\beforesubsec
\subsection{\bf The proof of Theorem \ref{th:complexity_of_phase_1}: Finding an initial point for \ref{eq:inexact_pf_scheme}}\label{apdx:th:complexity_of_phase_1}
\aftersubsec
From \eqref{eq:inexact_pf_scheme_phase1}, if we define $\nabla{\hat{F}}(\hat{\zb}^j) := \nabla{F}(\hat{\zb}^k) - t_0^{-1}\tau_{k+1}\zeta_0$, we still have $\nabla^2{\hat{F}}(\hat{\zb}^j) = \nabla^2{F}(\hat{\zb}^j)$.
Hence, the estimate \eqref{eq:key_estimate1} still holds for $\hat{\lambda}_{\tau}(\hat{\zb}^j)$.

Next, if we define $\bar{\vb}^j :=  \Pc_{\hat{\zb}^j}\left(\hat{\zb}^j - \nabla^2{F}(\hat{\zb}^j)^{-1}\left(\nabla{F}(\hat{\zb}^j)- \tau_jt_0^{-1}\hat{\zeta}^0\right); t_0\right)$, then, by the definition of $\Pc_{\hat{\zb}^j}$, we have
\begin{equation}\label{eq:thm5_est1}
-t_0\left[\nabla^2{F}(\hat{\zb}^j)(\bar{\vb}^j - \hat{\zb}^j) + \nabla{F}(\hat{\zb}^j) - \tau_jt_0^{-1}\hat{\zeta}_0\right] \in \Ac(\bar{\vb}^j).
\end{equation}
Similarly, since $\bar{\hat{\zb}}^{j+1} := \Pc_{\hat{\zb}^j}\left(\hat{\zb}^j - \nabla^2{F}(\hat{\zb}^j)^{-1}\left(\nabla{F}(\hat{\zb}^j)- \tau_{j+1}t_0^{-1}\hat{\zeta}^0\right); t_0\right)$, we have
\begin{equation}\label{eq:thm5_est2}
-t_0\left[\nabla^2{F}(\hat{\zb}^j)(\bar{\hat{\zb}}^{j+1} - \hat{\zb}^j) + \nabla{F}(\hat{\zb}^j) - \tau_{j+1}t_0^{-1}\hat{\zeta}_0\right] \in \Ac(\bar{\hat{\zb}}^{j+1}).
\end{equation}
Using \eqref{eq:thm5_est1}, \eqref{eq:thm5_est2} and the monotonicity of $\Ac$, we have
\begin{equation*}
t_0\iprods{\nabla^2{F}(\hat{\zb}^j)(\bar{\hat{\zb}}^{j+1}  - \bar{\vb}^j), \bar{\hat{\zb}}^{j+1}  - \bar{\vb}^j} \leq (\tau_j - \tau_{j+1})\iprods{\hat{\zeta}_0, \bar{\vb}^j - \bar{\hat{\zb}}^{j+1}}.
\end{equation*}
Using $\tau_{j+1} := \tau_j - \Delta_j$ and the Cauchy-Schwarz inequality, the last inequality leads to
\begin{equation}\label{eq:thm5_est4}
t_0\norm{\bar{\hat{\zb}}^{j+1}  - \bar{\vb}^j}_{\hat{\zb}^j} \leq \Delta_j\Vert\hat{\zeta}_0\Vert_{\hat{\zb}^j}^{\ast}.
\end{equation}
Now, similar to the proof of Lemma~\ref{le:update_penalty_param}, using \eqref{eq:thm5_est4}, we can derive
\begin{equation}\label{eq:thm5_est6}
\hat{\lambda}_{\tau_{j+1}}(\hat{\zb}^{j}) \leq \hat{\lambda}_{\tau_{j}}(\hat{\zb}^j) + \frac{\Delta_j}{t_0}\Vert\hat{\zeta}_0\Vert_{\hat{\zb}^j}^{\ast}.
\end{equation}
By the same argument as the proof of \eqref{eq:choice_of_sigma}, we can show that with $\hat{\gamma}_k := \frac{\Delta_j}{t_0}\Vert\hat{\zeta}_0\Vert_{\hat{\zb}^j}^{\ast}$, we have $\abs{\hat{\gamma}_k} \leq \frac{c\sqrt{\eta}}{1+c\sqrt{\eta}} - \eta$.
This shows that $\Delta_j \leq \frac{t_0}{\Vert\hat{\zeta}_0\Vert_{\hat{\zb}^j}^{\ast}}\left(\frac{c\sqrt{\eta}}{1+c\sqrt{\eta}} - \eta\right)$, which is the first estimate of \eqref{eq:choice_of_sigma0}.
The second estimate of \eqref{eq:choice_of_sigma0} can be derived as in Lemma~\ref{le:update_penalty_param} using $\eta$ instead of $\beta$.

We prove \eqref{eq:diff_nt_decrement}. From \eqref{eq:nt_decrement} and \eqref{eq:nt_decrement0}, using the triangle inequality, we can upper bound
\begin{align*}
\lambda_{t_0}(\zb^0) &:= \big\Vert \zb^0 - \Pc_{\zb^0}\big(\zb^0 - \nabla^2F(\zb^0)^{-1}\nabla{F}(\zb^0); t_0\big) \big\Vert_{\zb^0} \nonumber\\
&\overset{\tiny{\zb^0 := \hat{\zb}^j}}{=} \big\Vert \hat{\zb}^j - \Pc_{\hat{\zb}^j}\big(\hat{\zb}^j - \nabla^2F(\hat{\zb}^j)^{-1}\nabla{F}(\hat{\zb}^j); t_0\big) \big\Vert_{\hat{\zb}^j} \nonumber\\
&\leq  \Big\Vert \hat{\zb}^j \!-\! \Pc_{\hat{\zb}^j}\big(\hat{\zb}^j \!-\! \nabla^2F(\hat{\zb}^j)^{-1}\big(\nabla{F}(\hat{\zb}^j) \!-\! \tau_jt_0^{-1}\hat{\zeta}^0\big); t_0\big)\Big\Vert_{\hat{\zb}^j} \nonumber\\
&+ \Big\Vert \Pc_{\hat{\zb}^j}\big(\hat{\zb}^j - \nabla^2F(\hat{\zb}^j)^{-1}\nabla{F}(\hat{\zb}^j); t_0\big) -  \Pc_{\hat{\zb}^j}\big(\hat{\zb}^j \!-\! \nabla^2F(\hat{\zb}^j)^{-1}\big(\nabla{F}(\hat{\zb}^j)  \!-\! \tau_jt_0^{-1}\hat{\zeta}^0\big); t_0\big) \Big\Vert_{\hat{\zb}^j} \nonumber\\
& \overset{\tiny\eqref{eq:nt_decrement0},\eqref{eq:Pc_oper_property}} \leq \hat{\lambda}_{\tau_j}(\hat{\zb}^j) + \big\Vert t_0^{-1}\tau_j\nabla^2{F}(\hat{\zb}^j)^{-1}\hat{\zeta}^j\big\Vert_{\hat{\zb}^j} \nonumber\\
&= \hat{\lambda}_{\tau_j}(\hat{\zb}^j) + \tau_jt_0^{-1}\Vert\hat{\zeta}^0\Vert_{\hat{\zb}^j}^{\ast},
\end{align*}
which proves the first inequality of \eqref{eq:diff_nt_decrement}.

By \cite[Corollary 4.2.1]{Nesterov2004}, we have $\Vert\hat{\zeta}^0\Vert_{\hat{\zb}^j}^{\ast} \leq \kappa\Vert\hat{\zeta}^0\Vert_{\bar{\zb}_F^{\star}}^{\ast}$, where $\bar{\xb}_F^{\star}$ and $\kappa$ are given by \eqref{eq:analytical_center} and below \eqref{eq:analytical_center}, respectively.
Hence, $\bar{\Delta}_{\eta} := \frac{\mu_{\eta}}{\kappa\Vert\hat{\zeta}^0\Vert_{\bar{\zb}_F^{\star}}^{\ast}} \leq \bar{\Delta}_{j}$.
The second estimate of \eqref{eq:diff_nt_decrement} follows from $\tau_j := \tau - \sum_{l=0}^{j-1}\Delta_j \leq 1 - j\bar{\Delta}_{\eta}$ due to the update rule \eqref{eq:inexact_pf_scheme_phase1} with $\Delta_j := \bar{\Delta}_{j} \geq\bar{\Delta}_{\eta}$.
In order to guarantee $\lambda_{t_0}(\zb^0) \leq \beta$, it follows from \eqref{eq:diff_nt_decrement} and the update rule of $\tau_j$ that
\begin{equation*}
j \geq \frac{1}{\bar{\Delta}_{\eta}}\left(1 - \frac{(\beta - \eta)t_0}{\kappa\Vert\hat{\zeta}^0\Vert_{\bar{\zb}_F^{\star}}^{\ast}}\right).
\end{equation*}
Finally, substituting $\bar{\Delta}_{\eta} =  \frac{t_0}{\kappa\Vert\hat{\zeta}_0\Vert_{\bar{\zb}^{\star}_F}^{\ast}}\left(\frac{c\sqrt{\eta}}{1+c\sqrt{\eta}} - \eta\right)$ into this estimate and after simplifying the result, we obtain the remaining conclusion of Theorem   \ref{th:complexity_of_phase_1}.
\Eproof
%%% End of the proof.

%% Proof of Theorem 5.2: The primal recovery.
\beforesubsec
\subsection{\bf The proof of  Theorem \ref{th:primal_recovery}: Primal recovery for \eqref{eq:constr_cvx2} in Algorithm~\ref{alg:A1c}}\label{apdx:th:primal_recovery}
\aftersubsec
By the definition of $\varphi$, we have $\varphi(\yb) := f^{\ast}(\cb - L^{\ast}\yb) = f^{\ast}(t^{-1}(\cb - L^{\ast}\yb)) - \nu\ln(t)$ due to the self-concordant logarithmic homogeneity of $f$.
Using the property of the Legendre transformation $f^{\ast}$ of $f$, we can express this function as
\begin{equation*}
\varphi(\yb) = t^{-1}\max_{\xb\in\intx{\Kc}}\set{\iprods{\cb - L^{\ast}\yb, \xb} - tf(\xb) }  - \nu\ln(t).
\end{equation*}
We show the point $\xb^k$ given by \eqref{eq:primal_sol} solves the above maximization problem.
We can write down the optimality condition of the above maximization problem as
\begin{equation*}
\cb - L^{\ast}\yb^{k\!+\!1} - t_{k\!+\!1}\nabla{f}(\xb^{k\!+\!1}) = 0,
\end{equation*}
which leads to $\nabla{f}(\xb^{k\!+\!1}) = t_{k\!+\!1}^{-1}(\cb - L^{*}\yb^{k\!+\!1})$.
On the other hand, by the well-known property of $f$ \cite{Nesterov2004}, we have $\xb^{k\!+\!1} = \nabla{f^{*}}(\nabla{f}(\xb^{k\!+\!1})) = \nabla{f^{*}}\left( t_{k\!+\!1}^{-1}(\cb - L^{*}\yb^{k\!+\!1})\right) \in \intx{\Kc}$.

Now, we prove \eqref{eq:recovery}.
We note that $\cb - L^{*}\yb^{k\!+\!1} - t_{k\!+\!1}\nabla{f}(\xb^{k\!+\!1}) = 0$ and $\Vert\nabla{f}(\xb)\Vert_{\xb}^{\ast}\leq\sqrt{\nu}$, which leads to
\begin{equation*}
\Vert L^{*}\yb^{k\!+\!1} -\cb \Vert_{\xb^{k\!+\!1}}^{*} = t_{k\!+\!1}\Vert \nabla{f}(\xb^{k\!+\!1})\Vert_{\xb^{k\!+\!1}}^{*} \leq t_{k\!+\!1}\Shu{\sqrt{\nu}}.
\end{equation*}
Since $t_{k\!+\!1} \leq \varepsilon$, this estimate leads to the first inequality of \eqref{eq:recovery}.

From \eqref{eq:approx_sol}, there exists $\eb^k\in\R^p$ such that $\eb^k \in \nabla{\varphi}(\yb^k) + \nabla^2{\varphi}(\yb^k)(\yb^{k\!+\!1} - \yb^k) + t_{k\!+\!1}^{-1}\partial{\psi}(\yb^{k\!+\!1})$ and $\Vert \eb^k \Vert_{\yb^{k}}^{*} \leq \delta_k$.
This condition leads to
\begin{equation*}
\eb^k + \nabla{\varphi}(\yb^{k\!+\!1}) - \nabla{\varphi}(\yb^k) - \nabla^2{\varphi}(\yb^k)(\yb^{k\!+\!1} - \yb^k) \in \nabla{\varphi}(\yb^{k\!+\!1}) +  t_{k\!+\!1}^{-1}\partial{\psi}(\yb^{k\!+\!1}).
\end{equation*}
This expression leads to
\begin{align}\label{eq:pro23_proof1}
\mathrm{dist}_{\yb^{k\!+\!1}}\Big(0, &\nabla{\varphi}(\yb^{k\!+\!1}) +  t_{k\!+\!1}^{-1}\partial{\psi}(\yb^{k\!+\!1})\Big)  \leq \Vert \eb^k + \nabla{\varphi}(\yb^{k\!+\!1}) - \nabla{\varphi}(\yb^k) - \nabla^2{\varphi}(\yb^k)(\yb^{k\!+\!1} - \yb^k)\Vert_{\yb^{k\!+\!1}}^{*} \nonumber\\
&\leq \Vert \eb^k\Vert_{\yb^{k\!+\!1}}^{*} + \Vert \nabla{\varphi}(\yb^{k\!+\!1}) - \nabla{\varphi}(\yb^k) - \nabla^2{\varphi}(\yb^k)(\yb^{k\!+\!1} - \yb^k)\Vert_{\yb^{k\!+\!1}}^{*}.
\end{align}
To estimate the right-hand side of this inequality, we define $M_k := \Vert \nabla{\varphi}(\yb^{k\!+\!1}) - \nabla{\varphi}(\yb^k) - \nabla^2{\varphi}(\yb^k)(\yb^{k\!+\!1} - \yb^k)\Vert_{\yb^{k\!+\!1}}^{*}$.
With the same proof as \cite[Theorem 4.1.14]{Nesterov2004}, we can show that
\begin{equation}\label{eq:pro23_proof2a}
M_k \leq \left(1 - \Vert \yb^{k\!+\!1} - \yb^k\Vert_{\yb^k}\right)^{-2}\Vert \yb^{k\!+\!1} - \yb^k\Vert_{\yb^k}^2 \leq \frac{\left(\delta(\yb^k) + \lambda_{t_{k\!+\!1}}(\yb^k)\right)^2}{\left(1- \lambda_{t_{k\!+\!1}}(\yb^k) - \delta(\yb^k)\right)^2}.
\end{equation}
Here, we use $\Vert \yb^{k\!+\!1} - \yb^k\Vert_{\yb^k} \leq \Vert \yb^{k\!+\!1} - \bar{\yb}^{k\!+\!1}\Vert_{\yb^k} + \Vert \bar{\yb}^{k\!+\!1} - \yb^k\Vert_{\yb^k} = \delta(\yb^k) + \lambda_{t_{k\!+\!1}}(\yb^k)$ by the \Shu{definitions} of $\lambda_{t_{+}}(\yb)$ in \eqref{eq:nt_decrement} and of $\delta(\yb)$ \Shu{above} \eqref{eq:key_estimate1}.
Substituting \eqref{eq:pro23_proof2a} into \eqref{eq:pro23_proof1} to get
\begin{equation}\label{eq:pro23_proof2}
\mathrm{dist}_{\yb^{k\!+\!1}}\left(0, \nabla{\varphi}(\yb^{k\!+\!1}) +  t_{k\!+\!1}^{-1}\partial{\psi}(\yb^{k\!+\!1})\right) \leq \Vert \eb^k\Vert_{\yb^{k\!+\!1}}^{*} + \frac{\left(\delta(\yb^k) + \lambda_{t_{k\!+\!1}}(\yb^k)\right)^2}{\left(1- \lambda_{t_{k\!+\!1}}(\yb^k) - \delta(\yb^k)\right)^2}.
\end{equation}
Next, it remains to estimate $\Vert \eb^k\Vert_{\yb^{k\!+\!1}}^{*}$. Indeed, we have
\begin{equation*}
\begin{array}{ll}
\Vert \eb^k\Vert_{\yb^{k\!+\!1}}^{*} &\leq\! \big(1 - \Vert \yb^{k\!+\!1} - \yb^{k}_{t_k}\Vert_{\yb^k}\big)^{-1}\Vert \eb^k\Vert_{\yb^k} \leq \left(1- \lambda_{t_{k\!+\!1}}(\yb^k) - \delta(\yb^k)\right)^{-1}\Vert \eb^k\Vert_{\yb^k} \vspace{0.75ex}\\
&\leq \frac{\delta_k}{1-\Shu{\lambda_{t_{k\!+\!1}}(\yb^k)}-\delta_k}.
\end{array}
\end{equation*}
Using this estimate into \eqref{eq:pro23_proof2} and $\lambda_{t_{k\!+\!1}}(\yb^k) \leq c\sqrt{\beta}(1+c\sqrt{\beta})^{-1}$ from Lemma \ref{le:update_penalty_param}, we obtain
\begin{equation*}
\mathrm{dist}_{\yb^{k\!+\!1}}\left(0, \nabla{\varphi}(\yb^{k\!+\!1}) +  t_{k\!+\!1}^{-1}\partial{\psi}(\yb^{k\!+\!1})\right) \leq \frac{\delta_k(1+c\sqrt{\beta})}{(1-\delta_k(1+c\sqrt{\beta}))} + \frac{(\delta_k(1+c\sqrt{\beta})+c\sqrt{\beta})^2}{(1 -\delta_k(1+c\sqrt{\beta}))^2}.
\end{equation*}
Substituting an upper bound $\delta_t := \frac{(1-c^2)\beta}{(1+c\sqrt{\beta})^3\left[3c\sqrt{\beta} + c^2\beta + (1+c\sqrt{\beta})^3\right]}.$ of $\delta_k$ from Lemma \ref{le:update_penalty_param} into the last estimate and simplifying the result, we get
\begin{equation}\label{eq:pro23_proof3}
{\!\!\!\!}\mathrm{dist}_{\yb^{k\!+\!1}}\left(0, \nabla{\varphi}(\yb^{k\!+\!1}) +  t_{k\!+\!1}^{-1}\partial{\psi}(\yb^{k\!+\!1})\right) \leq
\theta(c,\beta),{\!\!\!}
\end{equation}\label{eq:pro23_proof3b}
where $\theta(c,\beta)$ is defined as
\begin{equation}
\begin{array}{ll}
\theta(c,\beta) &:= \frac{(1-c^2)\beta}{(1+c\sqrt{\beta})^2\left[3c\sqrt{\beta} + c^2\beta + (1+c\sqrt{\beta})^3\right]-(1-c^2)\beta}
\vspace{1ex}\\
&+ \left(\frac{(1-c^2)\beta + c\sqrt{\beta}(1+c\sqrt{\beta})^2\left[3c\sqrt{\beta} + c^2\beta + (1+c\sqrt{\beta})^3\right]}{(1+c\sqrt{\beta})^2\left[3c\sqrt{\beta} + c^2\beta + (1+c\sqrt{\beta})^3\right]-(1-c^2)\beta}\right)^2.
\end{array}
\end{equation}
Using the fact that $c \in (0, 1)$ and $0 \leq \beta < 0.5(1 + 2c^2 - \sqrt{1 + 4c^2})$, we have $\theta(c,\beta) \leq 1$.
Since $\nabla{\varphi}(\cdot) = \Shu{-L\nabla{f^{*}}( \cb-L^{*}(\cdot) ) = -t_{k\!+\!1}^{-1}L\nabla{f^{*}}(t_{k\!+\!1}^{-1}(\cb-L^{*}(\cdot)))}$ due to \eqref{eq:smooth_dual_term}\Shu{,
using} \eqref{eq:primal_sol} we can show that $\nabla{\varphi}(\yb^{k\!+\!1}) = t_{k\!+\!1}^{-1}L\xb^{k\!+\!1}$.
Plugging this expression into \eqref{eq:pro23_proof3} and noting that $\partial{\psi}(\cdot) = \partial{g}^{*}(\cdot) + \bb$, we obtain
\begin{equation*}
\mathrm{dist}_{\yb^{k\!+\!1}}\left(L\xb^{k\!+\!1} - \bb, \partial{g^{*}}(\yb^{k\!+\!1})\right) = \mathrm{dist}_{\yb^{k\!+\!1}}\left(0, \bb - L\xb^{k\!+\!1} + \partial{g^{*}}(\yb^{k\!+\!1})\right)  \leq t_{k\!+\!1}\theta(c,\beta).
\end{equation*}
Let $\sb^{k+1} = \pi_{\partial{g^{\ast}}(\yb^{k+1})}(\Shu{L\xb^{k\!+\!1} - \bb})$ the projection of \Shu{$L\xb^{k\!+\!1} - \bb$} onto $\partial{g^{\ast}}(\yb^{k+1})$. Then,  $\sb^{k+1} \in \partial{g^{\ast}}(\yb^{k+1})$, and hence, $\yb^{k+1} \in\partial{g}(\sb^{k+1})$, which shows the second term of \eqref{eq:recovery}.
Using this relation in the last inequality and the definition of $\sb^{k+1}$, we obtain $\Vert L\xb^{k\!+\!1} - \bb - \sb^{k+1}\Vert_{\yb^{k+1}}^{\ast} \leq t_{k\!+\!1}\theta(c,\beta)$, which is the third term of \eqref{eq:recovery}.
Finally, since $\theta(c,\beta) \leq 1$, we have $\max\set{\sqrt{\nu}, \theta(c,\beta)} = \sqrt{\nu}$. Using \eqref{eq:recovery}, we can conclude that $(\xb^k, \sb^k)$ is an $\varepsilon$-solution of  \eqref{eq:constr_cvx} if $\sqrt{\nu}t_k \leq \varepsilon$.
\Eproof
%% End of the proof.

% BibTeX users please use one of
%\bibliographystyle{spbasic}      % basic style, author-year citations
%\bibliographystyle{spmpsci}      % mathematics and physical sciences
%\bibliographystyle{spphys}       % APS-like style for physics
%\bibliography{}   % name your BibTeX data base
\vspace{-3ex}
\bibliographystyle{plain}
%\bibliography{/Users/quoctd/Dropbox/E-Books/tran_bibtex_new}

%%% This is the implementation detail and will be removed from the paper.
%%\input{Self_con_inclusion_supp2}

\end{document}